\newtheorem{theorem}{Theorem}[section]
\newtheorem{corollary}[theorem]{Corollary}
\newtheorem{definition}[theorem]{Definition}
\newtheorem{lemma}[theorem]{Lemma}
\newtheorem{remark}[theorem]{Remark}
\newtheorem{proposition}[theorem]{Proposition}
\newtheorem{example}[theorem]{Example}
\newtheorem{assumption}[theorem]{Assumption}
\def\ba#1\ea{\begin{align*}#1\end{align*}}
\def \follmer {F\"{o}llmer}
\newcommand{\R}{\mathbb{R}}
\newcommand{\E}{\mathbb{E}}
\newcommand{\N}{\mathbb{N}}
\newcommand{\1}{\mathbf{1}}
\newcommand{\bb}[1]{\mathbb{#1}}
\newcommand{\assign}{:=}
\newcommand{\dd}{\mathrm{d}}
\newcommand{\op}[1]{\ensuremath{\operatorname{#1}}}
\DeclareMathOperator*{\argmax}{argmax}
\title{Fractional  It\^o Calculus}
\author{Rama Cont and Ruhong Jin\\
Mathematical Institute, University of Oxford}
\date{2021}
\begin{document}

\maketitle

\begin{abstract}
We derive  It\^o-type change of variable formulas for smooth functionals of irregular paths with non-zero $p-$th variation along a sequence of partitions where $p \geq 1$ is arbitrary, in terms of fractional derivative operators, extending the results of the F\"ollmer-Ito calculus to the general case of paths with 'fractional' regularity.
In the  case where $p$ is not an integer, we show that    the   change of variable formula may sometimes contain a non-zero a 'fractional' It\^o remainder term and provide a representation for this remainder term. 
These results are then extended to 
paths with non-zero $\phi-$variation and multi-dimensional paths.
Using these results, we derive an isometry property for the pathwise F\"ollmer integral in terms of $\phi$ variation.   
\end{abstract}


\tableofcontents

\newpage
Hans \follmer\ derived in \cite{follmer1981} a pathwise formulation of the Ito formula and laid the grounds for the development of a pathwise approach to Ito calculus, which has been developed in different directions in \cite{ananova2017,bertoin1987,chiu2018,chiu2021,cont2012,cont2018pathwise,contdas2017,davis2018, obloj2021,schied2016}.

\follmer's original approach focuses on functions of paths with finite quadratic variation along a sequence of partitions.
In a recent work \cite{cont2018pathwise},
Cont and Perkowski  extended the \follmer-Ito formula of \cite{follmer1981} to function(al)s of paths with variation of order $p\in 2\mathbb{N}$ along a sequence of partitions and obtained functional change of variable formulas, applicable  to functionals of  Fractional Brownian motion and other fractional processes with arbitrarily low regularity (i.e. any Hurst exponent $H>0$). These results involve pathwise integrals defined as limits of compensated left Riemann sums, which are in turn related to rough integrals associated with a reduced order-p rough path \cite{cont2018pathwise}.

As the notion of $p$-the order variation may be defined for any $p>0$, an interesting question is to investigate how the results in \cite{cont2018pathwise} extend to 'fractional' case   $p\notin \mathbb{N}$. In particular  one may ask whether the  change of variable formula contains  a fractional remainder term in this case and whether the definition of the compensated integral needs to be adjusted.

We investigate these questions using the tools of fractional calculus \cite{samko1987fractional}. Given that fractional derivative operators are  (non-local) integral operators, one challenge is to obtain non-anticipative, 'local' formulas which have similar properties to those obtained in the integer case \cite{cont2018pathwise}. We are able to do so using a 'local' notion of fractional derivative and 
 exhibit conditions under which these change of variable formulas contain (or not) a 'fractional It\^o remainder term'. It is shown that in most cases there is no remainder term; we also discuss some cases where a non-zero remainder term appears and give a representation for this  term.

These results are first derived for smooth functions then extended to functionals, using the concept of vertical derivative \cite{CF10B}. We extend these results to the case of paths with finite $\phi$-variation \cite{han2021probabilistic}  for a class of  functions $\phi$ and we obtain an isometry formula for the pathwise integral in terms of $\phi$-variation, extending the results of \cite{ananova2017,cont2018pathwise} to the fractional case. 
Finally, we extend these results to the multi-dimensional case. 

Our change of variable formulas are purely analytical and pathwise in nature,  but applicable to functionals of fractional Brownian motions and other fractional processes with arbitrary Hurst exponent, leading in this case to non-anticipative 'It\^o' formulas for functionals of such processes. 
 However,  as probabilistic assumptions play no role in the derivation of our results,  we have limited to a minimum the discussion of such examples.

\paragraph{Outline}
Section \ref{sec.prelim}
 recalls some results on pathwise calculus for functions of irregular paths (Sec. \ref{sec.pvariation}) and  fractional derivative operators and  associated fractional Taylor expansions (Sec. \ref{sec.fractionalderivative}).

Section \ref{sec.fractionalIto} contains our main results on change of variable formulas for function(al)s of paths with fractional regularity. We first give a change of variable formula without remainder term for time-independent functions (Theorem \ref{itoformula1}), followed by a discussion of an example where a remainder term may appear (Example \ref{nonzero}). We then provide a formula for computing this fractional remainder term using an auxiliary space(Theorem \ref{EquiDistantFormula}).
Sections \ref{sec.timedependent} extends these results to the time-dependent case and Section \ref{sec.pathdependent} extends them to the case of path-dependent functionals using the Dupire derivative.

In Section \ref{sec.phi} we show that these results may be extended to the case where the $p-$th  variation is replaced by the more general concept of $\phi-$variation \cite{han2021probabilistic}.
In Section \ref{sec.isometry} we derive a pathwise isometry formula extending a result of \cite{ananova2017} to the case of $\phi-$variation.
Finally, in Section \ref{sec.multidimensional} we discuss extensions to the multidimensional case. These extensions are not immediate, as the space $V_p(\pi)$ is not a vector space.

\section{Preliminaries} \label{sec.prelim}
  
\subsection{Pathwise calculus for paths with finite p-th variation}\label{sec.pvariation}

We define, following \cite{cont2018pathwise,follmer1981}, the concept of $p$-th variation along a sequence of partitions $\pi_n=\{t_0^n, \dots, t^n_{N(\pi_n)}\}$ with $t_0^n=0<...< t^n_k<...< t^n_{N(\pi_n)}=T$. Define the \emph{oscillation} of $S \in C([0,T],\R)$ along $\pi_n$ as
\[
	\op{osc}(S,\pi_n) := \max_{[t_j,t_{j+1}] \in \pi_n} \max_{r,s \in [t_j,t_{j+1}]} |S(s) - S(r)|.
\]
Here and in the following we write $[t_j,t_{j+1}] \in \pi_n$ to indicate that $t_j$ and $t_{j+1}$ are both in $\pi_n$ and are immediate successors (i.e. $t_j < t_{j+1}$ and $ \pi_n \cap(t_j , t_{j+1}) = \emptyset$).

\begin{definition}[$p$-th variation along a sequence of partitions]\label{def:p-var} Let $p>0$. A  continuous path $S \in C([0,T],\R)$ is said to have a $p$-th variation along a sequence of
partitions $\pi=(\pi_n)_{n\geq 1}$ if $\op{osc}(S,\pi_n)\to 0$ and the sequence of measures
	\[
		\mu^n \assign \sum_{[t_j, t_{j + 1}] \in \pi_n} \delta (\cdot - t_j) |S(t_{j + 1}) - S(t_j)|^p
	\]
	converges weakly to a measure $\mu$ without atoms. In that case we write $S \in V_p(\pi)$ and $[S]^p(t) := \mu([0,t])$ for $t \in [0,T]$, and we call $[S]^p$ the \emph{$p$-th variation} of $S$.
%
\end{definition}

\begin{remark}{\em \label{rem.pvariation}
\begin{enumerate}
	\item Functions in $V_p(\pi)$ do not necessarily have finite $p$-variation in the usual sense. Recall that the $p$-variation of a function $f \in C([0,T],\R)$ is defined as \cite{dudley2011}
		\[
			\|f \|_{\op{p-var}} \assign \Big(\sup_{\pi\in \Pi([0,T])} \sum_{[t_j, t_{j + 1}] \in \pi} |f(t_{j + 1}) - f(t_j)|^p\Big)^{1/p},
		\]
		where the supremum is taken over the set $\Pi([0,T])$ of all partitions $\pi$ of $[0,T]$. A typical example is the Brownian motion $B$, which has quadratic variation $[B]^2(t) = t$ along any refining sequence of partitions almost surely while at the same time having infinite 2-variation almost surely \cite{dudley2011,taylor1972}: $$\mathbb{P}\left( \|B\|_{\op{2-var}} = \infty\right) =1.$$  
	\item If $S \in V_p(\pi)$ and $q > p$, then $S \in V_q(\pi_n)$ with $[S]^q \equiv 0$.
\end{enumerate}}
\end{remark}
The following   characterization is given in \cite{cont2018pathwise}:
\begin{lemma} Let $S \in C([0,T],\R)$.
 $S \in V_p(\pi)$ if and only if there exists a continuous function $[S]^p$ such that
	\begin{equation}
	\forall t\in[0,T],\qquad \sum_{\substack{[t_j, t_{j + 1}] \in \pi_n: \\ t_j \le t}} |S(t_{j + 1}) - S(t_j)|^p\mathop{\longrightarrow}^{n\to\infty} [S]^p(t).\label{eq.pointwisecv}\end{equation}
 If this property holds, then the convergence in \eqref{eq.pointwisecv} is uniform.
\end{lemma}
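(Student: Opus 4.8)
The plan is to recognize the sum in \eqref{eq.pointwisecv} as the cumulative distribution function of the measure $\mu^n$: writing $F^n(t) := \mu^n([0,t]) = \sum_{[t_j,t_{j+1}]\in\pi_n:\, t_j\le t} |S(t_{j+1})-S(t_j)|^p$, the assertion becomes an equivalence between weak convergence $\mu^n \Rightarrow \mu$ (with $\mu$ atomless) and pointwise convergence of the distribution functions $F^n \to [S]^p$ towards a continuous limit, together with the claim that this convergence is automatically uniform. The whole argument is thus a specialization of the classical distribution-function characterization of weak convergence of finite measures on the compact interval $[0,T]$, combined with a Dini/Pólya-type uniformity result.

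For the ``only if'' direction I would start from $S \in V_p(\pi)$, so that $\mu^n \Rightarrow \mu$ with $\mu$ without atoms. Since $\mu(\{t\}) = 0$ for every $t$, each set $[0,t]$ has $\mu$-null boundary $\{0,t\}$ and is therefore a $\mu$-continuity set; the Portmanteau theorem then yields $F^n(t) = \mu^n([0,t]) \to \mu([0,t])$ for every $t \in [0,T]$. Setting $[S]^p(t) := \mu([0,t])$, the absence of atoms makes $t \mapsto \mu([0,t])$ continuous, which is exactly the desired pointwise convergence to a continuous limit.

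For the converse, assuming $F^n(t) \to F(t) := [S]^p(t)$ for all $t$ with $F$ continuous, I would define $\mu$ as the Lebesgue--Stieltjes measure of $F$ on $[0,T]$; continuity of $F$ immediately forces $\mu$ to be atomless. To upgrade pointwise convergence of the distribution functions to weak convergence $\mu^n \Rightarrow \mu$, I would test against an arbitrary $g \in C([0,T])$ by approximating $\int g\, \dd\mu^n$ with Riemann--Stieltjes sums on a finite grid, using uniform continuity of $g$ to control the approximation error and the pointwise convergence $F^n \to F$ at the finitely many grid points, while the case $t=T$ supplies convergence of the total masses $\mu^n([0,T]) \to \mu([0,T])$. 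Here the oscillation condition $\op{osc}(S,\pi_n)\to 0$ of Definition \ref{def:p-var} must be invoked (or assumed throughout): continuity of $F$ on $[0,T]$ alone does not preclude an atom of $\mu$ at the left endpoint $0$, and the oscillation hypothesis is what rules this out and places $S$ genuinely in $V_p(\pi)$.

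Finally, for the uniformity claim I would use that each $F^n$ is nondecreasing while the limit $F$ is continuous and nondecreasing on the compact interval $[0,T]$. By Pólya's theorem (the monotone strengthening of Dini's theorem), pointwise convergence of monotone functions to a continuous limit on a compact interval is automatically uniform: choosing a finite grid $0 = s_0 < \dots < s_m = T$ with $F(s_{i+1})-F(s_i) < \varepsilon$ (possible by uniform continuity of $F$), monotonicity sandwiches $F^n(t)-F(t)$ between the differences at adjacent grid points, which converge, yielding a uniform bound. I expect the main obstacle to be the converse direction, specifically deriving genuine weak convergence from pointwise convergence of distribution functions and correctly accounting for the boundary behaviour at $0$ and $T$ through the oscillation hypothesis; the uniformity step is then a routine application of Pólya's theorem.
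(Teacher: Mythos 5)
Your proof is correct, and it is essentially the argument behind this lemma: the paper itself does not prove it but quotes it from \cite{cont2018pathwise}, and the proof there follows exactly your route --- the Portmanteau theorem (atomlessness of $\mu$ makes every $[0,t]$ a $\mu$-continuity set) for the forward direction, the classical equivalence between weak convergence of finite measures on $[0,T]$ and pointwise convergence of their distribution functions for the converse, and the Dini/P\'olya monotone-convergence argument for uniformity.

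Your caveat about the oscillation condition is a genuine catch, not a pedantic one. Pointwise convergence of $F^n(t)=\mu^n([0,t])$ to a continuous limit implies neither $\op{osc}(S,\pi_n)\to 0$ nor the absence of an atom of the limit measure at $t=0$: with the constant partition sequence $\pi_n=\{0,T\}$, the path $S(t)=\sin(2\pi t/T)$ gives $F^n\equiv 0$ (continuous limit, yet $\op{osc}(S,\pi_n)=2$), while $S(t)=t$ gives $F^n\equiv T^p$ (continuous limit, yet $\mu^n=T^p\,\delta_0$, whose weak limit has an atom at $0$ --- precisely the left-endpoint phenomenon you describe). So the ``if'' direction is valid only with $\op{osc}(S,\pi_n)\to 0$ as a standing hypothesis, exactly as you note; the lemma as stated here (and in \cite{cont2018pathwise}) leaves this implicit.
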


\begin{example}\label{ex:fBm}
If $B$ is a fractional Brownian motion with Hurst index $H \in (0,1)$ and $\pi_n = \{kT/n: k \in \N_0\} \cap[0,T]$, then $B \in V_{1/H}(\pi)$ and $[B]^{1/H}(t) = t \E[|B_1|^{1/H}]$, see~\cite{pratelli2006,Rogers1997}.
\end{example}

Cont and Perkowski \cite{cont2018pathwise} obtained the following change of variable formula for $p \in \N$,   $S \in V_p(\pi)$ and $f \in C^p(\R,\R)$:
	\[
		f(S(t)) - f(S(0)) = \int_0^t f'(S(s))\dd S(s) + \frac{1}{p!} \int_0^t f^{(p)}(S(s)) \dd [S]^p(s),
	\]
	holds, where the integral 
	\[
		\int_0^t f'(S(s))\dd S(s) \assign \lim_{n \rightarrow \infty} \sum_{[t_j, t_{j + 1}] \in \pi_n} \sum_{k=1}^{p-1} \frac{f^{(k)} (S(t_j))}{k!}
   (S(t_{j + 1} \wedge t) - S(t_j \wedge t))^k
	\]
	is defined as a (pointwise) limit of compensated Riemann sums.

\begin{remark}[Relation with Young integration and rough integration]\em  
	Note however that, given the assumptions on $S$, the pathwise integral appearing in the formula cannot be defined as a Young integral. This relates to the observation in Remark \ref{rem.pvariation} that p-variation can be infinite for $S\in V_p(\pi)$.
	
	When $p=2$ it  reduces to an ordinary (left) Riemann sum. For $p>2$ such  compensated Riemann sums appear in the construction of `rough path integrals' ~\cite{FrizHairer,gubinelli2004}. Let $X \in C^\alpha([0,T],\R)$ be $\alpha$-H\"older continuous for some $\alpha \in (0,1)$, and write $q = \lfloor \alpha^{-1} \rfloor$. We can enhance $X$ uniquely into a (weakly) geometric rough path $(\bb X^1_{s,t}, \bb X^2_{s,t}, \dots, \bb X^q_{s,t})_{0 \le s \le t \le T}$, where $\bb X^k_{s,t} \assign  (X(t) - X(s))^k/k!$. Moreover, for $g \in C^{q+1}(\R,\R)$ the function $g'(X)$ is controlled by $X$ with Gubinelli derivatives
	\begin{align*}
		g'(X(t)) - g'(X(s)) & = \sum_{k=1}^{q-1} \frac{g^{(k+1)}(X(s))}{k!} (X(t) - X(s))^k + O(|t-s|^{q \alpha}) \\
		& = \sum_{k=1}^{q-1} g^{(k+1)}(X(s)) \bb X^k_{s,t} + O(|t-s|^{q \alpha}),
	\end{align*}
	and therefore the controlled rough path integral $\int_0^t g'(X(s)) \dd X(s)$ is given by
	\[
		\lim_{|\pi|\to 0} \sum_{[t_j, t_{j+1}] \in \pi} \sum_{k=1}^q g^{(k)}(X(s)) \bb X^k_{s,t} = \lim_{|\pi|\to 0} \sum_{[t_j, t_{j+1}] \in \pi} \sum_{k=1}^q g^{(k)}(X(s)) \frac{(X(t) - X(s))^k}{k!},
	\]
	where $|\pi|$ denotes the mesh size of the partition $\pi$, and which is the type of compensated Riemann sum   used to define the integral above, plus an additional term (Ito term) absorbed into the sum). 
\end{remark}
	

	\subsection{Fractional derivatives and fractional Taylor expansions}
	\label{sec.fractionalderivative}
	We recall some definitions on fractional derivatives and their properties. Several different notions of fractional derivatives exist in the literature and it is not clear which ones are the right tools for a given context. Our goal here is to shed some light on the advantages of different notions of fractional derivative.
	Much of this material may be found in the literature \cite{samko1987fractional}. 
	We have provided some detailed proofs for some useful properties whose proof we have not been able to find in the literature. 
	\begin{definition}[Riemann-Liouville fractional integration operator]
	    Suppose $f$ is a real function and $\alpha > 0$, the left Riemann-Liouville fractional integration operator of order $\alpha$ is defined by 
	    \[
	        I_{a^+}^\alpha f(x) := \frac{1}{\Gamma(\alpha)}\int_a^x(x-t)^{\alpha-1}f(t)dt
	    \]
	    if integration exists for $x > a \in \mathbb{R}$. Similarly, the right Riemann-Liouville fractional integration operator if given by 
	    \[
	        I_{b^-}^\alpha f(x) := \frac{1}{\Gamma(\alpha)}\int_x^b(t-x)^{\alpha-1}f(t)dt
	    \]
	    for $x < b \in \mathbb{R}$.
	\end{definition}
	This may be used to define a (non-local) fractional derivative of a real function $f$ associated with some base point, as follows:
	\begin{definition}[Riemann-Liouville fractional derivative]
	    Suppose $f$ is a real function and $n \leq \alpha < n+1$ for some integer $n \in \mathbb{N}$. Then left Riemann-Liouville fractional derivative of order $\alpha$ with base point $a$ at $x > a$ is defined by 
	    \[
	        D_{a^+}^\alpha f(x) := \left(\frac{d}{dx}\right)^{n+1} \left(I_{a^+}^{n+1-\alpha} f\right)(x)
	    \]
	    if it exists. Similarly, the right Riemann-Liouville fractional derivative of order $\alpha$ with base point $b$ and $b > x$ is given by 
	    \[
	        D_{b^-}^\alpha f(x) := \left(-\frac{d}{dx}\right)^{n+1} \left(I_{b^-}^{n+1-\alpha} f\right)(x)
	    \]
	\end{definition}
	Some basic properties of the Riemann-Liouville fractional derivative are described below.
	\begin{proposition}\label{RLFracDerivProp}
	    Suppose $f$ is a real function, $\alpha,\beta$ are two real numbers. We will also use the convention that $I_{a+}^\alpha = D_{a+}^{-\alpha}$. Then we have 
	    \begin{enumerate}
	        \item Fractional integration operators $\{I_{a+}^{\alpha},\alpha \geq 0\}$($\{I_{b-}^{\alpha},\alpha \geq 0\}$) form a semigroup in $L^p(a,b)$ for every $p\geq 1$. It is continuous in uniform topology for $\alpha > 0$ and strongly continuous for $\alpha\geq 0$.
	        \item $I_{a+}^{\alpha}\circ I_{a+}^\beta f= I_{a+}^{\alpha+\beta} f$($I_{b-}^{\alpha}\circ I_{b-}^\beta f= I_{b-}^{\alpha+\beta} f$) is valid in the following cases:
	        \begin{itemize}
	            \item $\beta \geq 0, \alpha+\beta \geq 0, f \in L^1([a,b])$
	            \item $\beta \leq 0, \alpha \geq 0, f \in I_{a+}^{-\beta}(L^1)(I_{b-}^{-\beta}(L^1))$
	            \item $\alpha \leq 0, \alpha+\beta \leq 0, f \in I_{a+}^{-\alpha-\beta}(L_1)(I_{b-}^{-\alpha-\beta}(L_1))$
	        \end{itemize}
	        \item If $n \leq \alpha < n+1$ and $D_{a^+}^\alpha f \in L^1([a,y])$ for some $y > x \in [a,b]$ , then we have 
	        \[
	            I_{a^+}^\alpha D_{a^+}^\alpha f(x) = f(x) - \sum_{j=1}^{n+1} \frac{D_{a^+}^{\alpha - j}f(a)}{\Gamma(\alpha - j + 1)}\left(x-a\right)^{\alpha - j}
	        \]
	        and similarly,
	        \[
	            I_{b^-}^\alpha D_{b^-}^\alpha f(x) = f(x) - \sum_{j=1}^{n+1} \frac{(-1)^j D_{b^-}^{\alpha - j}f(b)}{\Gamma(\alpha - j + 1)}\left(b-x\right)^{\alpha - j}
	        \]
	        Here $D^{\kappa}_{a^+}f(a) = \lim_{y\rightarrow a,y > a}D^{\kappa}_{a^+}f(y)$ for any $\kappa \in \mathbb{R}$ and similar for the right Riemann-Liouville fractional derivative.
	    \end{enumerate}
	\end{proposition}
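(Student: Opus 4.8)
The plan is to establish the three parts in the order (2), (1), (3), since the semigroup law (1) is a consequence of the composition rule (2), and (2) is the engine that drives the fractional Taylor formula (3); I label the three cases of (2) as (i)--(iii) in the order listed. For (2) in the base case $\alpha,\beta\ge 0$ and $f\in L^1([a,b])$, I would write $I_{a^+}^\alpha I_{a^+}^\beta f(x)$ as an iterated integral, apply Fubini (justified since $f\in L^1$ and the kernels $(x-t)^{\alpha-1}$, $(t-s)^{\beta-1}$ are locally integrable), and collapse the inner integral $\int_s^x (x-t)^{\alpha-1}(t-s)^{\beta-1}\,\dd t = \frac{\Gamma(\alpha)\Gamma(\beta)}{\Gamma(\alpha+\beta)}(x-s)^{\alpha+\beta-1}$ by the Beta integral, which produces $I_{a^+}^{\alpha+\beta}f$. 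The mixed-sign cases (ii) and (iii) then follow by unfolding each negative order $-\gamma<0$ as $I_{a^+}^{-\gamma}=D_{a^+}^{\gamma}=(\dd/\dd x)^{m+1} I_{a^+}^{m+1-\gamma}$, reducing to the positive-order identity on the integrable representative supplied by the hypotheses $f\in I_{a^+}^{-\beta}(L^1)$, resp. $f\in I_{a^+}^{-\alpha-\beta}(L^1)$, and using that $\dd/\dd x$ inverts $I_{a^+}^1$ on absolutely continuous functions; the right-sided identities are symmetric.

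Part (1) is then short: the semigroup property for $\alpha,\beta\ge0$ is case (i). For the norm bounds I would note that $I_{a^+}^\alpha$ is convolution on $[a,b]$ with $k_\alpha(t)=t^{\alpha-1}\mathbf 1_{t>0}/\Gamma(\alpha)$, so Young's inequality gives $\|I_{a^+}^\alpha f\|_{L^p}\le \frac{(b-a)^\alpha}{\Gamma(\alpha+1)}\|f\|_{L^p}$; continuity of $\alpha\mapsto I_{a^+}^\alpha$ in operator norm for $\alpha>0$ follows from continuity of $\alpha\mapsto k_\alpha$ in $L^1$ away from $0$, while strong continuity at $\alpha=0$ (where $I_{a^+}^0=\mathrm{id}$) is an approximate-identity argument giving $I_{a^+}^\alpha f\to f$ in $L^p$ as $\alpha\downarrow 0$, the convergence being merely strong, not uniform.

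The heart of the statement is (3), where the cleanest route exploits (2) rather than integrating by parts directly. Setting $\beta:=n+1-\alpha\in(0,1]$ and $g:=I_{a^+}^{\beta}f$, so that $D_{a^+}^\alpha f=g^{(n+1)}$, I would factor $I_{a^+}^\alpha=I_{a^+}^{\alpha-n-1}\circ I_{a^+}^{n+1}$ by case (i) (legitimate because $g^{(n+1)}=D_{a^+}^\alpha f\in L^1$), so that $I_{a^+}^\alpha D_{a^+}^\alpha f=I_{a^+}^{\alpha-n-1}\big(I_{a^+}^{n+1}g^{(n+1)}\big)$. The classical Taylor formula with integral remainder gives $I_{a^+}^{n+1}g^{(n+1)}=g-\sum_{k=0}^n \frac{g^{(k)}(a)}{k!}(x-a)^k$; applying $I_{a^+}^{\alpha-n-1}$, the principal term collapses to $I_{a^+}^{\alpha-n-1}I_{a^+}^{n+1-\alpha}f=I_{a^+}^0 f=f$ (again case (i)), and each monomial is handled by the power rule $I_{a^+}^{\alpha-n-1}(x-a)^k=\frac{k!}{\Gamma(\alpha+k-n)}(x-a)^{\alpha+k-n-1}$, a one-line Beta computation. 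Re-indexing $j=n+1-k$ and using $g^{(n+1-j)}=(\dd/\dd x)^{n+1-j}I_{a^+}^{n+1-\alpha}f=D_{a^+}^{\alpha-j}f$ turns the sum into $\sum_{j=1}^{n+1}\frac{D_{a^+}^{\alpha-j}f(a)}{\Gamma(\alpha-j+1)}(x-a)^{\alpha-j}$, which is the claim; the right-sided formula is identical up to the signs $(-1)^j$ arising from differentiating powers of $(b-x)$.

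The main obstacle is not the algebra but the domain and regularity bookkeeping that makes each step legitimate under the single hypothesis $D_{a^+}^\alpha f\in L^1$: I must check that $g=I_{a^+}^{n+1-\alpha}f$ is $n$-times absolutely differentiable so that the classical Taylor remainder applies, that the hypotheses of (2) hold at every invocation (precisely where membership in the image spaces $I_{a^+}^{\,\cdot}(L^1)$ is used), and above all that the boundary coefficients are read through the stated limits $D_{a^+}^{\alpha-j}f(a)=\lim_{y\downarrow a}D_{a^+}^{\alpha-j}f(y)$ and that these limits exist finitely, without which the expansion is not even well defined. A secondary subtlety is the integer case $\alpha=n$, where several Gamma factors meet poles and the identity must be recovered as the classical Taylor expansion by a limiting argument in $\alpha$.
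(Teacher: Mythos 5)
The paper contains no proof of Proposition \ref{RLFracDerivProp}: it is stated as background material, with the reader referred to the literature on fractional calculus \cite{samko1987fractional}, and the authors explicitly reserve written-out proofs for statements (such as Proposition \ref{TaylorExpansionFrac}) whose proofs they could not locate there. So there is no in-paper argument to compare yours against; judged on its own, your proposal reconstructs the standard literature proof correctly. The Fubini/Beta computation for the positive-order semigroup, the Young-inequality bound $\|I_{a^+}^\alpha f\|_{L^p}\le (b-a)^\alpha\|f\|_{L^p}/\Gamma(\alpha+1)$ together with $L^1$-continuity of the kernel family $\alpha\mapsto k_\alpha$ (operator-norm continuity for $\alpha>0$, approximate identity at $\alpha=0$), and, for (3), the factorization $I_{a^+}^\alpha D_{a^+}^\alpha f = I_{a^+}^{\alpha-n-1}\bigl(I_{a^+}^{n+1}g^{(n+1)}\bigr)$ with $g=I_{a^+}^{n+1-\alpha}f$, followed by the classical Taylor formula, the power rule, and the re-indexing $j=n+1-k$ with the identification $g^{(n+1-j)}=D_{a^+}^{\alpha-j}f$ --- all of this is exactly how these identities are established in \cite{samko1987fractional}, and your algebra is right.

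Three of the points you leave as ``bookkeeping'' deserve a sharper resolution. First, case (i) of part (2) is not the pure positive-order case: it allows $\alpha<0$ as long as $\alpha+\beta\ge 0$, so Fubini/Beta alone does not prove it; your unfolding device must be invoked there as well, writing $I_{a^+}^\beta f = I_{a^+}^{-\alpha}I_{a^+}^{\alpha+\beta}f$ (both orders nonnegative) and then using the left-inverse property $D_{a^+}^{-\alpha}I_{a^+}^{-\alpha}h=h$ for $h\in L^1$. Second, the absolute-continuity obstacle in (3) --- which you correctly identify as the main one --- is closable under the paper's hypothesis provided $D_{a^+}^\alpha f$ is read as existing at \emph{every} point of $[a,y]$ and lying in $L^1$: then $g^{(n)}$ is everywhere differentiable with integrable derivative, hence absolutely continuous by the classical theorem that an everywhere-differentiable function with $L^1$ derivative is the integral of its derivative; this licenses the Taylor formula with integral remainder and simultaneously gives continuity of $g^{(k)}$ up to $a$, so the boundary limits $D_{a^+}^{\alpha-j}f(a)$ exist finitely. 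If existence were granted only almost everywhere, the identity can genuinely fail, so this reading is not optional. Third, the integer case $\alpha=n$ needs no limiting argument in $\alpha$: the only degenerate term is $j=n+1$, which is annihilated by $1/\Gamma(0)=0$, and on the other side of your computation the power rule produces the matching degeneracy $D_{a^+}^{1}(x-a)^0=0$, so the formula holds verbatim with the usual convention for reciprocal Gamma at nonpositive integers.
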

	The Riemann-Liouville derivative has several shortcomings. One of them is that the fractional derivative of a constant is not zero. To overcome this, we introduce a modification of Riemann-Liouville fractional derivative which is called the {\it Caputo derivative}.
	\begin{definition}[Caputo derivative]
	    Suppose $f$ is a real function and $n+1 \geq \alpha > n$. We define the left and right Caputo fractional derivatives of order $\alpha$ at $x \in (a,b)$ by 
	    \[
	        C_{a^+}^\alpha f(x) = D_{a^+}^\alpha \left[f(t) - \sum_{k=0}^{n}\frac{f^{(k)}(a)}{k!}(t-a)^k\right](x)
	    \]
	    \[
	        C_{b^-}^\alpha f(x) = D_{b^-}^\alpha \left[f(t) - \sum_{k=0}^{n}\frac{(-1)^k f^{(k)}(b)}{k!}(b-t)^k\right](x)	    
	    \]
	    here $f^{(k)}$ denotes the $k$-th derivative of $f$.
	\end{definition}
	We enumerate below some useful properties of the Caputo derivative:
	\begin{proposition}\label{Caputoprop}
	Suppose $f$ is a real function and $\alpha,\beta>0$.
	    \begin{enumerate}
	        \item If $\alpha = n \in \mathbb{N}$ and $f$ is $n-$th differentiable, then $C_{a^+}^\alpha f = f^{(n)}$ and $C_{b^-}^\alpha f = (-1)^n f^{(n)}$
	        \item Let $0 < \alpha \leq \beta $ and $f \in L^1[a,b]$, then we have $C_{a^+}^\alpha I_{a^+}^\beta f = I_{a+}^{\beta - \alpha}f $ and $C_{b^-}^\alpha I_{b^-}^\beta f = I_{b-}^{\beta - \alpha}f$
	        \item $C_{a^+}^\alpha C_{a^+}^n = C_{a^+}^{\alpha+n}$ and $C_{b^-}^\alpha C_{b^-}^n = C_{b^-}^{\alpha + n}$ for all $n\in\mathbbm{N}$.
	    \end{enumerate}
	\end{proposition}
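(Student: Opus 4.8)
The plan is to reduce all three statements to a single representation of the Caputo derivative as a Riemann--Liouville fractional integral of an ordinary derivative, namely
\[
C_{a^+}^\alpha f = I_{a^+}^{n+1-\alpha} f^{(n+1)}, \qquad C_{b^-}^\alpha f = (-1)^{n+1} I_{b^-}^{n+1-\alpha} f^{(n+1)}, \qquad n < \alpha \le n+1,
\]
valid for $f$ as smooth as the statements require (e.g.\ $f \in C^{n+1}$), with the convention $I_{a^+}^0 = \mathrm{id}$. To establish this I would set $h = f - \sum_{k=0}^{n} \frac{f^{(k)}(a)}{k!}(t-a)^k$ and use Taylor's theorem with integral remainder to write $h = I_{a^+}^{n+1} f^{(n+1)}$, since the subtracted polynomial is precisely the degree-$n$ Taylor polynomial and the remainder $\frac{1}{n!}\int_a^x (x-t)^n f^{(n+1)}(t)\,dt$ equals $I_{a^+}^{n+1}f^{(n+1)}(x)$. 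Applying the definition of the Riemann--Liouville derivative, then the semigroup property $I_{a^+}^{n+1-\alpha} I_{a^+}^{n+1} = I_{a^+}^{2n+2-\alpha}$ from Proposition \ref{RLFracDerivProp}(2), and finally the identity $(d/dx)^{n+1} I_{a^+}^{n+1} = \mathrm{id}$ on $L^1$, the $n+1$ derivatives cancel all but $n+1-\alpha$ orders of integration and yield the claimed formula; the right-sided version is identical using $(-d/dx)^{n+1}I_{b^-}^{n+1} = \mathrm{id}$.

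Given this representation, the remaining claims are essentially bookkeeping. For (1), with $\alpha = n$ an integer the Caputo index equals $n-1$, so the representation gives $C_{a^+}^n f = I_{a^+}^{0} f^{(n)} = f^{(n)}$ and similarly $C_{b^-}^n f = (-1)^n f^{(n)}$; alternatively one sees this directly, since the integer-order Riemann--Liouville derivative is the ordinary derivative and $(d/dx)^n$ annihilates the subtracted Taylor polynomial of degree $n-1$. For (3), I would first use (1) to replace $C_{a^+}^n f$ by $f^{(n)}$, then apply the representation twice: writing $\alpha \in (m, m+1]$, both $C_{a^+}^\alpha f^{(n)}$ and $C_{a^+}^{\alpha+n} f$ reduce to $I_{a^+}^{m+1-\alpha} f^{(m+n+1)}$, and on the right-sided side the sign factors $(-1)^{n+m+1}$ match, giving the identity.

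For (2), where only $f \in L^1$ is assumed, the representation cannot be applied to $f$ directly, so I would argue at the level of the Riemann--Liouville derivative. The semigroup property first gives $D_{a^+}^\alpha I_{a^+}^\beta f = (d/dx)^{n+1} I_{a^+}^{n+1-\alpha+\beta} f = I_{a^+}^{\beta-\alpha} f$, again by peeling off $I_{a^+}^{n+1}$ and differentiating $n+1$ times. It then remains to check that the Taylor polynomial subtracted in the Caputo definition of $C_{a^+}^\alpha(I_{a^+}^\beta f)$ vanishes: setting $g = I_{a^+}^\beta f$, one has $g^{(k)}(a) = I_{a^+}^{\beta-k}f(a^+) = 0$ for every $k \le n$, using repeatedly $\frac{d}{dx}I_{a^+}^\gamma = I_{a^+}^{\gamma-1}$ for $\gamma > 1$ together with $I_{a^+}^\gamma f(a^+) = 0$ for $\gamma > 0$ and $f \in L^1$, where the required inequality $\beta > k$ is guaranteed by $k \le n < \alpha \le \beta$. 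Hence $C_{a^+}^\alpha I_{a^+}^\beta f = D_{a^+}^\alpha I_{a^+}^\beta f = I_{a^+}^{\beta-\alpha}f$, and the right-sided identity follows by symmetry.

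I expect the main obstacle to be the careful handling of boundary terms and regularity hypotheses: justifying the interchange of $(d/dx)^{n+1}$ with the fractional integral, verifying at each step the precise hypotheses of the semigroup property in Proposition \ref{RLFracDerivProp}(2), and --- most delicately --- reconciling the mismatched integer-order conventions of the Riemann--Liouville derivative ($n = \lfloor\alpha\rfloor$) and the Caputo derivative ($n+1 \ge \alpha > n$), which must be tracked separately at the integer values of $\alpha$ where the two differ.
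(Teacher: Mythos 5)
The paper states Proposition \ref{Caputoprop} without proof (it is quoted as standard material from the literature), so your argument can only be judged on its own terms. Your reduction to the representation $C_{a^+}^\alpha f = I_{a^+}^{n+1-\alpha}f^{(n+1)}$ is correctly derived (Taylor's theorem with integral remainder, the semigroup property, and $(d/dx)^{n+1}I_{a^+}^{n+1} = \mathrm{id}$), and it does settle parts (1) and (3), including the sign bookkeeping $(-1)^{n+m+1}$ for the right-sided operators; your direct argument for (1) is in fact the one to keep, since ``$n$-th differentiable'' alone does not justify the integral form of the remainder. The RL-level identity $D_{a^+}^\alpha I_{a^+}^\beta f = I_{a^+}^{\beta-\alpha}f$ in part (2) is also fine for $f\in L^1$.

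Part (2), however, contains a genuine gap. The auxiliary fact you invoke --- $I_{a^+}^\gamma f(a^+) = 0$ for every $\gamma>0$ and $f \in L^1$ --- is true for $\gamma \ge 1$ (by absolute continuity of the Lebesgue integral) but false for $0<\gamma<1$: take $f(t) = (t-a)^{-\gamma}\in L^1[a,b]$; then
\[
I_{a^+}^\gamma f(x) = \frac{1}{\Gamma(\gamma)}\int_a^x (x-t)^{\gamma-1}(t-a)^{-\gamma}\,dt = \frac{B(\gamma,1-\gamma)}{\Gamma(\gamma)} = \Gamma(1-\gamma) \neq 0
\]
for all $x>a$. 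In your argument this is exactly the case $k=n$ when $\beta - n < 1$, i.e.\ $n<\alpha\le\beta<n+1$: the Taylor coefficient $g^{(n)}(a)$ you need to vanish does not vanish. Worse, this is not a repairable technicality of your write-up, because the identity in part (2) itself fails for bare $L^1$ inputs: with $\alpha=\beta=\tfrac12$ and $f(t)=(t-a)^{-1/2}$ one gets $I_{a^+}^{1/2}f \equiv \sqrt{\pi}$, hence $C_{a^+}^{1/2}I_{a^+}^{1/2}f = D_{a^+}^{1/2}\left[\sqrt{\pi}-\sqrt{\pi}\right] = 0 \neq f = I_{a^+}^{0}f$. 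So the statement implicitly requires something extra near $a$ --- for instance $f$ bounded or continuous at $a$ (which restores $I_{a^+}^\gamma f(a^+)=0$ for all $\gamma>0$, since then $|I_{a^+}^\gamma f(x)|\le \|f\|_\infty (x-a)^\gamma/\Gamma(\gamma+1)$), or the restriction $\beta\ge n+1$. Your proof of (2) is correct precisely under such a strengthened hypothesis, and you should state it explicitly rather than pass it off as a consequence of $f\in L^1$.
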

	\begin{example}\label{xalpha}
	    We give some simple examples of Caputo fractional derivative here. 
	    \begin{enumerate}
	        \item Consider $f(x) = |x|^\alpha$ and $0 < \beta \leq \alpha < 1$. Then we have 
	        \begin{eqnarray*}
	            C_{a^+}^\beta f(x) &=& \frac{d}{dx}\left(\frac{1}{\Gamma(1-\beta)}\int_a^x\frac{|t|^\alpha - |a|^\alpha}{(x-t)^\beta}dt\right) \\
	            &=& \frac{1}{\Gamma(1-\beta)}\frac{d}{dx}\left(\int_a^x|t|^\alpha(x-t)^{-\beta}dt - |a|^\alpha\frac{(x-a)^{1-\beta}}{1-\beta}\right) \\
	            &=& \frac{1}{\Gamma(1-\beta)}\frac{d}{dx}\left|\int_{\frac{a}{x}}^1 |t|^\alpha|1-t|^{-\beta}dt |x|^{\alpha-\beta+1}\right| - \frac{1}{\Gamma(1-\beta)}|a|^\alpha(x-a)^{-\beta} \\
	            &=& \frac{1}{\Gamma(1-\beta)}\left(|a|^\alpha(x-a)^{-\beta}\frac{a-x}{x} + (\alpha - \beta + 1)|x|^{\alpha - \beta}\int_{\frac{a}{x}}^1|t|^\alpha|1-t|^{-\beta}dt\right)\\
	        \end{eqnarray*}
	        So we can see directly that $f^{(\beta+)}(a) = 0$ for any $a \neq 0$ or $\beta < \alpha$ but $f^{(\alpha+)}(0) = \Gamma(\alpha + 1)$. It can be seen further that for all $a\geq 0$, $C_{a^+}^\alpha f$ is continuous on $[a,\infty]$ but for $a < 0$, $C_{a^+}^\alpha f$ has singularity at point $0$. In particular, for $\beta = \alpha$, we have 
	        \[
	            C_{a^+}^\alpha f(x) = \frac{1}{\Gamma(1-\beta)}\int_{\frac{a}{x}}^1 g(t) - g\left(\frac{a}{x}\right) dt
	        \]
	        with $g(t) = \left|\frac{t}{1-t}\right|^\alpha$
	    \end{enumerate}
	\end{example}
	Associated with the Caputo derivative is a fractional Taylor expansion:
	\begin{proposition}\label{TaylorExpansionFrac}
	    Let $f \in C^n([a,b])$ and $n+1 \geq \alpha > n$. If the Caputo fractional derivative of order $\alpha$ of $f$ exists on $[a,b]$ and $C_{a^+}^\alpha f \in L^1([a,b])$, then we have the Taylor expansion:
	    \[
	        f(x) = \sum_{k=0}^{n} f^{(k)}(a)\frac{(x-a)^k}{k!} + \frac{1}{\Gamma(\alpha)}\int_{a}^x\frac{C_{a^+}^\alpha f(t)}{(x-t)^{1-\alpha}}dt
	    \]
	    and 
	    \[
	        f(x) = \sum_{k=0}^{n} f^{(k)}(b)\frac{(x-b)^k}{k!} + \frac{1}{\Gamma(\alpha)}\int_x^b\frac{C_{b^-}^\alpha f(t)}{(t-x)^{1-\alpha}}dt
	    \]
	\end{proposition}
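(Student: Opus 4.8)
The plan is to reduce the expansion to the Riemann--Liouville inversion identity of Proposition \ref{RLFracDerivProp}(3), applied not to $f$ itself but to its Taylor remainder
\[
    g(t) := f(t) - \sum_{k=0}^{n}\frac{f^{(k)}(a)}{k!}(t-a)^k .
\]
By the very definition of the Caputo derivative one has $C_{a^+}^\alpha f = D_{a^+}^\alpha g$, while $g \in C^n([a,b])$ with $g^{(k)}(a)=0$ for $0\le k\le n$. For non-integer $\alpha$ we have $\lfloor\alpha\rfloor=n$, so that Proposition \ref{RLFracDerivProp}(3) applies to $g$ (its hypothesis $D_{a^+}^\alpha g=C_{a^+}^\alpha f\in L^1$ being exactly our standing assumption), giving
\[
    \frac{1}{\Gamma(\alpha)}\int_a^x\frac{C_{a^+}^\alpha f(t)}{(x-t)^{1-\alpha}}\,dt
    = I_{a^+}^\alpha D_{a^+}^\alpha g(x)
    = g(x) - \sum_{j=1}^{n+1}\frac{D_{a^+}^{\alpha-j}g(a)}{\Gamma(\alpha-j+1)}(x-a)^{\alpha-j}.
\]
Substituting the definition of $g$, the claimed formula is therefore equivalent to the assertion that every boundary coefficient vanishes, i.e. $D_{a^+}^{\alpha-j}g(a)=0$ for $j=1,\dots,n+1$. (The integer endpoint $\alpha=n+1$ is simply the classical Taylor formula with integral remainder and needs no separate treatment.)

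Establishing this vanishing is the heart of the matter and the step I expect to be the main obstacle, since the fractional derivatives $D_{a^+}^{\alpha-j}$ are non-local and a priori sensitive to the whole behaviour of $g$ on $[a,y]$. Fix $j$ and set $\kappa:=\alpha-j$, which for non-integer $\alpha$ is a non-integer in $(-1,n)$. If $-1<\kappa<0$, then $D_{a^+}^{\kappa}=I_{a^+}^{-\kappa}$ and the crude bound
\[
    \bigl|I_{a^+}^{-\kappa}g(y)\bigr|\le\frac{1}{\Gamma(-\kappa)}\int_a^y(y-t)^{-\kappa-1}|g(t)|\,dt\le\frac{\sup_{[a,y]}|g|}{\Gamma(1-\kappa)}(y-a)^{-\kappa}
\]
together with the continuity of $g$ and $g(a)=0$ forces $D_{a^+}^{\kappa}g(y)\to0$ as $y\to a^+$. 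If $0<\kappa<n$, put $m:=\lceil\kappa\rceil\le n$; because $g^{(k)}(a)=0$ for $0\le k\le m-1$, the order-$(m-1)$ Taylor polynomial of $g$ at $a$ is identically zero, so by definition $D_{a^+}^{\kappa}g=C_{a^+}^{\kappa}g$. The Caputo derivative of the $C^m$ function $g$ admits the representation $C_{a^+}^{\kappa}g=I_{a^+}^{m-\kappa}g^{(m)}$, which follows from Proposition \ref{Caputoprop}(2) applied to $g^{(m)}\in L^1$ after noting that $I_{a^+}^{m}g^{(m)}=g$ (the integral form of Taylor's theorem). Since $g^{(m)}$ is bounded near $a$ and $m-\kappa\in(0,1)$, the same estimate as above bounds $I_{a^+}^{m-\kappa}g^{(m)}(y)$ by a constant times $(y-a)^{m-\kappa}$, which tends to $0$. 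In every case $D_{a^+}^{\alpha-j}g(a)=0$, and the first expansion follows.

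The second ($b^-$) expansion is obtained from the first by the reflection $\sigma:t\mapsto a+b-t$, under which the left-sided operators $I_{a^+}^\alpha,D_{a^+}^\alpha,C_{a^+}^\alpha$ turn into their right-sided counterparts; the sign factors $(-1)^k$ and $(-1)^{n+1}$ appearing in the definitions of $D_{b^-}^\alpha$ and $C_{b^-}^\alpha$ are precisely those generated by the derivatives of $f\circ\sigma$ in the change of variables. Running the argument above for $f\circ\sigma$ on $[a,b]$ and substituting back yields the stated formula. Throughout, the essential point is the regularity bookkeeping of the middle paragraph: the hypothesis $f\in C^n$ (which makes $g$ vanish to order $n$ at $a$) and the integrability $C_{a^+}^\alpha f\in L^1$ (which licenses Proposition \ref{RLFracDerivProp}(3)) are exactly what is needed to annihilate the non-local boundary terms and leave only the clean fractional-integral remainder.
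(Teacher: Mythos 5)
Your proof is correct, and its skeleton is the same as the paper's: subtract the Taylor polynomial to form $g$, apply the Riemann--Liouville inversion identity of Proposition \ref{RLFracDerivProp}(3) to $g$ (legitimate since $D_{a^+}^\alpha g = C_{a^+}^\alpha f \in L^1$ by hypothesis), and reduce everything to showing $D_{a^+}^{\alpha-j}g(a)=0$ for $j=1,\dots,n+1$. Where you diverge is in how those boundary terms are killed. For $j\le n$ the paper re-expands $D_{a^+}^{\alpha-j}g(a)$ as $C_{a^+}^{\alpha-j}f(a)$ minus Caputo derivatives of the monomials $(\cdot-a)^k$, and then invokes Lemma \ref{lemCaputo} (stated without proof, cited from the literature) to conclude each piece vanishes; only the $j=n+1$ term is estimated directly. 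You instead exploit the vanishing of $g^{(k)}(a)$ for $k\le n$ to write $D_{a^+}^{\alpha-j}g = C_{a^+}^{\alpha-j}g = C_{a^+}^{\alpha-j}I_{a^+}^{m}g^{(m)} = I_{a^+}^{m-\kappa}g^{(m)}$ via Proposition \ref{Caputoprop}(2), after which every case $j=1,\dots,n+1$ falls to the same elementary estimate $|I_{a^+}^{\gamma}h(y)|\le \sup|h|\,(y-a)^{\gamma}/\Gamma(\gamma+1)$ with $\gamma\in(0,1)$. This buys self-containedness (you never need Lemma \ref{lemCaputo}, and in effect you re-prove the special case of it that is actually used) and a uniform treatment of all the boundary terms, at the small cost of the extra bookkeeping identity $g=I_{a^+}^m g^{(m)}$; the paper's route is shorter on the page but leans on an external lemma. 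Your explicit reflection argument $\sigma:t\mapsto a+b-t$ for the right-sided formula is also a welcome expansion of the paper's ``the derivation of the other formula is similar.''
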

	As we were not able to find a  proof of this expansion in the literature,  we provide a detailed proof below. Let us start with a simple but useful lemma, which corresponds to Theorem 3.3 in  \cite{jarad2017generalized} for the case $\rho = 1$:
	\begin{lemma}\label{lemCaputo}
	 Suppose $f \in C^n([a,b])$, then for any $0 < \alpha < n$ and $\alpha \notin \mathbb{N}$, we have actually $C_{a^+}^\alpha f(a) = C_{b^-}^\alpha f(b) = 0$.
	\end{lemma}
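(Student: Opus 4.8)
The plan is to reduce the non-local Riemann--Liouville expression defining the Caputo derivative to the classical integral representation $C_{a^+}^\alpha f = I_{a^+}^{m+1-\alpha}f^{(m+1)}$ and then estimate it directly. Write $m := \lfloor \alpha \rfloor$, so that $m < \alpha < m+1$ (strict, since $\alpha \notin \mathbb{N}$) and $m+1 \le n$; in particular $f \in C^{m+1}([a,b])$, so $f^{(m+1)}$ is continuous, hence bounded, on $[a,b]$. Set $g(t) := f(t) - \sum_{k=0}^{m}\frac{f^{(k)}(a)}{k!}(t-a)^k$, the function to which $D_{a^+}^\alpha$ is applied in the definition of $C_{a^+}^\alpha f$. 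Then $g \in C^{m+1}([a,b])$, $g^{(k)}(a) = 0$ for $0 \le k \le m$, and $g^{(m+1)} = f^{(m+1)}$.

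The key step is to establish the integral representation. First I would apply Taylor's theorem with integral remainder to $g$: since $g^{(k)}(a) = 0$ for $k = 0,\dots,m$, the polynomial part vanishes and
\[
    g(x) = \frac{1}{m!}\int_a^x (x-t)^m g^{(m+1)}(t)\,dt = I_{a^+}^{m+1} g^{(m+1)}(x).
\]
Recalling $D_{a^+}^\alpha g = \left(\frac{d}{dx}\right)^{m+1} I_{a^+}^{m+1-\alpha} g$ and writing $\beta := m+1-\alpha \in (0,1)$, the semigroup property of Proposition \ref{RLFracDerivProp} (applicable since $\beta, m+1 > 0$ and $g^{(m+1)} \in L^1([a,b])$) gives $I_{a^+}^{\beta} g = I_{a^+}^{\beta} I_{a^+}^{m+1} g^{(m+1)} = I_{a^+}^{m+1}\bigl(I_{a^+}^{\beta} g^{(m+1)}\bigr)$. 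Since $I_{a^+}^{\beta} g^{(m+1)}$ is continuous, applying $\left(\frac{d}{dx}\right)^{m+1}$ (which inverts the $(m+1)$-fold integral $I_{a^+}^{m+1}$) yields
\[
    C_{a^+}^\alpha f(x) = D_{a^+}^\alpha g(x) = I_{a^+}^{\beta} g^{(m+1)}(x) = \frac{1}{\Gamma(m+1-\alpha)}\int_a^x (x-t)^{m-\alpha} f^{(m+1)}(t)\,dt,
\]
the Caputo representation consistent with Proposition \ref{Caputoprop}.

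With this in hand the conclusion is routine. Letting $M := \sup_{[a,b]}|f^{(m+1)}|$ and using $m-\alpha > -1$,
\[
    \bigl|C_{a^+}^\alpha f(x)\bigr| \le \frac{M}{\Gamma(m+1-\alpha)}\int_a^x (x-t)^{m-\alpha}\,dt = \frac{M\,(x-a)^{m+1-\alpha}}{\Gamma(m+2-\alpha)} \xrightarrow[x\to a^+]{} 0,
\]
since $m+1-\alpha > 0$. Hence $C_{a^+}^\alpha f(a) = \lim_{x\to a^+} C_{a^+}^\alpha f(x) = 0$. The statement for the right derivative $C_{b^-}^\alpha f(b) = 0$ follows by the same argument after the reflection $t \mapsto a+b-t$, which interchanges the left and right operators. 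I expect the main obstacle to be the justification of the integral representation --- specifically, interchanging the $(m+1)$-fold derivative with the fractional integral via the semigroup property and the vanishing boundary derivatives $g^{(k)}(a) = 0$; once $C_{a^+}^\alpha f = I_{a^+}^{m+1-\alpha} f^{(m+1)}$ is established, the decay estimate is immediate.
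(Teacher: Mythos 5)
Your proof is correct. Note that the paper itself gives no proof of Lemma \ref{lemCaputo}: it is imported from the literature (the authors point to Theorem 3.3 of \cite{jarad2017generalized} with $\rho=1$), so there is no internal argument to compare against, and what you have written fills that gap in the standard way. Your route is to establish the representation $C_{a^+}^\alpha f = I_{a^+}^{m+1-\alpha} f^{(m+1)}$ and then read off the decay: the reduction is justified correctly, since $\alpha\notin\mathbb{N}$ and $\alpha<n$ give $m+1\le n$ and hence $f^{(m+1)}\in C([a,b])$; Taylor's theorem with integral remainder applied to $g$ (whose derivatives up to order $m$ vanish at $a$) gives $g=I_{a^+}^{m+1}g^{(m+1)}$; the commutation $I_{a^+}^{\beta}I_{a^+}^{m+1}=I_{a^+}^{m+1}I_{a^+}^{\beta}$ is covered by the $L^1$ semigroup property of Proposition \ref{RLFracDerivProp}; and $(d/dx)^{m+1}$ does invert $I_{a^+}^{m+1}$ on continuous functions, so $D_{a^+}^\alpha g = I_{a^+}^{m+1-\alpha}f^{(m+1)}$. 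The final bound $|C_{a^+}^\alpha f(x)|\le M(x-a)^{m+1-\alpha}/\Gamma(m+2-\alpha)$ then yields $C_{a^+}^\alpha f(a)=0$ under the paper's convention that endpoint values of fractional derivatives are defined as one-sided limits, and the reflection $t\mapsto a+b-t$ indeed exchanges the left and right operators, settling $C_{b^-}^\alpha f(b)=0$. As a side benefit, your representation gives slightly more than the lemma asks for: it shows $C_{a^+}^\alpha f$ is continuous (even H\"older of order $m+1-\alpha$) on all of $[a,b]$, which is in the spirit of how the lemma is used later in the proof of Proposition \ref{TaylorExpansionFrac}.
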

	\begin{proof}[Proof of Proposition \ref{TaylorExpansionFrac}.]
    Define 
	\[
	    g(x) = f(x) - \sum_{k=0}^n \frac{f^{(k)}(a)}{k!}(x-a)^k.
	\]
	From proposition \ref{RLFracDerivProp}, we have 
	\[
	    I_{a^+}^\alpha D_{a^+}^\alpha g(x) = g(x) - \sum_{j=1}^{n+1} \frac{D_{a^+}^{\alpha - j}g(a)}{\Gamma(\alpha - j + 1)}\left(x-a\right)^{\alpha - j}
	\]
	Now we have by linearity of Riemann-Liouville fractional derivative operator and definition of Caputo fractional derivative that
	\[
	    D_{a^+}^{\alpha - j}g(a) = C_{a^+}^{\alpha - j}f(a) - \sum_{k = n-j+1}^n\frac{f^{(k)}(a)}{k!}D_{a^+}^{\alpha - j}((x-a)^k)(a)
	\]
	Furthermore, for $k \geq n-j+1 > \alpha - j$, $D_{a^+}^{\alpha - j}((x-a)^k)(a) = C_{a^+}^{\alpha - j}((x-a)^k)(a)$ by definition. Hence we have 
	\[
		D_{a^+}^{\alpha - j}g(a) = C_{a^+}^{\alpha - j}f(a) - \sum_{k = n-j+1}^n\frac{f^{(k)}(a)}{k!}C_{a^+}^{\alpha - j}((x-a)^k)(a)
	\]
	Now since $f \in C^n$ and $(\cdot-a)^k \in C^{\infty}$, we see that actually $D_{a^+}^{\alpha - j}g(a) = 0$  for every $j = 1, \cdots, n$ by lemma \ref{lemCaputo}. For the case $j = n + 1$, we actually have that 
	\begin{eqnarray*}
	    \left|D_{a^+}^{\alpha-n-1}g(a)\right| &=& \lim_{y \rightarrow a, y > a} \left|I_{a^+}^{n+1-\alpha}g(y)\right|   \\
	    &=& \lim_{y\rightarrow a, y > a} \frac{1}{\Gamma(n+1-\alpha)}\left|\int_a^y \frac{g(t)}{(y-t)^{\alpha - n}}dt\right| \\
	    &\leq& \lim_{y\rightarrow a, y > a} \frac{1}{\Gamma(n+1-\alpha)}\left|\int_a^y \frac{\max_{t \in [a,y]}\{|g(t)|\}}{(y-t)^{\alpha - n}}dt\right| \\
	    &=& \lim_{y\rightarrow a, y > a} \frac{1}{\Gamma(n+1-\alpha)}\max_{t \in [a,y]}\{|g(t)|\}(y-a)^{n+1-\alpha} = 0
	\end{eqnarray*}
	So actually we have 
	\[
	    g(x) = I_{a^+}^\alpha D_{a^+}^\alpha g(x) = I_{a^+}^\alpha C_{a^+}^\alpha f(x),
	\]
	hence the result. The derivation of the  other formula is similar. 
	\end{proof}
	The above derivative operators are non-local operators.
	We now introduce the concept of {\it local} fractional derivative:
	\begin{definition}[Local fractional derivative]\label{def.localderivative}
	    Suppose $f$ is left(resp. right) fractional differentiable of order $\alpha$ on $[a,a+\delta]([a-\delta,a])$ for some positive $\delta$, then the left(resp. right) local fractional derivative of order $\alpha$ of function $f$ at point $a$ is given by 
	    \[
	        f^{(\alpha+(resp. \alpha-))}(a) = \lim_{y\rightarrow a, y\geq(\leq) a}C_{a+(a-)}^\alpha(y)
	    \]
	\end{definition}
	\begin{remark}
	    Using the integration by parts formula, we can see that we actually have 
	    \[
	        D_{a^+}^\alpha f(x)= \sum_{k=0}^{n-1} \frac{f^{(k)}(a)(x-a)^{k-\alpha-n}}{\Gamma(k+2-\alpha)} + D_{a^+}^{(\alpha-n)} f^{(n)}(x)
	    \]
	    Hence we will have furthermore that $C_{a^+}^\alpha f(x) = C_{a^+}^{(\alpha - n)}f^{(n)}(x)$. This means the existence of one side will imply the existence of the another side. And if we take  the limit we can see that the existences of $f^{(\alpha+)}(x)$ and $(f^{(n)})^{(\alpha+)}(x)$ are equivalent and they are equal. This is very important in the following proofs.
	\end{remark}
	\begin{corollary}\label{fracpiano}
	    Let $f \in C^n([a,b])$ and $n+1 \geq \alpha > n$. Suppose left(right) local fractional derivative of order $\alpha$ of $f$ exists at $a$. This leads to the following fractional Taylor formula with  remainder term for $x\in[a,b]$
	    \begin{eqnarray}
	        f(x) &= \sum_{k=0}^{n} f^{(k)}(a)\frac{(x-a)^k}{k!} + \frac{1}{\Gamma(\alpha + 1)}f^{(\alpha+)}(a)(x-a)^\alpha + o(|x-a|^\alpha) \label{eq.piano1}\\
	        f(x) &= \sum_{k=0}^{n} f^{(k)}(b)\frac{(x-b)^k}{k!} + \frac{1}{\Gamma(\alpha + 1)}f^{(\alpha-)}(b)(b-x)^\alpha + o(|x-b|^\alpha)\label{eq.piano2}
	     \end{eqnarray}
	\end{corollary}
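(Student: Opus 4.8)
The plan is to obtain the Peano-type remainder \eqref{eq.piano1} directly from the exact Riemann--Liouville integral remainder furnished by Proposition \ref{TaylorExpansionFrac}, by extracting the limiting value $f^{(\alpha+)}(a)$ from inside the integral.

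First I would observe that the hypothesis, namely existence of the left local fractional derivative $f^{(\alpha+)}(a) = \lim_{y\to a^+} C_{a^+}^\alpha f(y)$, already presumes through Definition \ref{def.localderivative} that $f$ is left fractionally differentiable of order $\alpha$ on some interval $[a,a+\delta]$. Because the limit at $a$ exists and is finite, $C_{a^+}^\alpha f$ is bounded on a right-neighbourhood of $a$, hence lies in $L^1([a,a+\delta'])$ for small $\delta'$. Since $C_{a^+}^\alpha f(t)$ depends only on the values of $f$ on $[a,t]$, restricting from $[a,b]$ to $[a,a+\delta']$ changes nothing, and Proposition \ref{TaylorExpansionFrac} applies on this smaller interval to give, for all $x\in[a,a+\delta']$,
\[
f(x) = \sum_{k=0}^{n} f^{(k)}(a)\frac{(x-a)^k}{k!} + \frac{1}{\Gamma(\alpha)}\int_a^x \frac{C_{a^+}^\alpha f(t)}{(x-t)^{1-\alpha}}\,dt .
\]

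Next I would split the integrand as $C_{a^+}^\alpha f(t) = f^{(\alpha+)}(a) + \varepsilon(t)$, where $\varepsilon(t)\to 0$ as $t\to a^+$. The constant contribution integrates exactly to the claimed fractional term,
\[
\frac{f^{(\alpha+)}(a)}{\Gamma(\alpha)}\int_a^x (x-t)^{\alpha-1}\,dt = \frac{f^{(\alpha+)}(a)}{\alpha\,\Gamma(\alpha)}(x-a)^\alpha = \frac{1}{\Gamma(\alpha+1)}f^{(\alpha+)}(a)(x-a)^\alpha ,
\]
using $\Gamma(\alpha+1)=\alpha\Gamma(\alpha)$. For the remaining error integral I would argue it is $o(|x-a|^\alpha)$: given $\eta>0$, pick $\delta_\eta$ with $|\varepsilon(t)|\le\eta$ on $(a,a+\delta_\eta)$, so that for $x$ in that range
\[
\left|\frac{1}{\Gamma(\alpha)}\int_a^x \frac{\varepsilon(t)}{(x-t)^{1-\alpha}}\,dt\right| \le \frac{\eta}{\Gamma(\alpha)}\int_a^x (x-t)^{\alpha-1}\,dt = \frac{\eta}{\Gamma(\alpha+1)}(x-a)^\alpha ;
\]
dividing by $(x-a)^\alpha$ and letting $\eta\downarrow 0$ yields the estimate. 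Formula \eqref{eq.piano2} follows by the symmetric argument applied to the second expansion in Proposition \ref{TaylorExpansionFrac}.

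The calculation is short once the integral remainder is available; the one delicate point, which I regard as the main obstacle, is bridging the gap between the global hypotheses of Proposition \ref{TaylorExpansionFrac} (existence and $L^1$-integrability of $C_{a^+}^\alpha f$ across the whole interval) and the purely local hypothesis here. This is handled by restricting to $[a,a+\delta']$ and noting that the mere existence of the limit $f^{(\alpha+)}(a)$ forces local boundedness, and hence local integrability, of the Caputo derivative near $a$; the singular weight $(x-t)^{\alpha-1}$ is harmless because $\alpha>0$ makes it integrable.
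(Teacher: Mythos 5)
Your proof is correct and follows essentially the same route as the paper: both start from the exact integral remainder of Proposition \ref{TaylorExpansionFrac}, split off the constant $f^{(\alpha+)}(a)$ inside the integral to produce the term $\frac{f^{(\alpha+)}(a)}{\Gamma(\alpha+1)}(x-a)^\alpha$ exactly, and bound the leftover integral by $\frac{1}{\Gamma(\alpha+1)}\sup_{t\in[a,x]}\left|C_{a^+}^\alpha f(t)-f^{(\alpha+)}(a)\right|(x-a)^\alpha = o(|x-a|^\alpha)$ using the convergence $C_{a^+}^\alpha f(t)\to f^{(\alpha+)}(a)$. Your extra localization step (restricting to $[a,a+\delta']$ so that the $L^1$ hypothesis of the proposition is met) is a clarification the paper leaves implicit, but it does not alter the argument.
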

	\begin{proof}
	    We only prove for the left Caputo derivative case. 
	    \begin{eqnarray*}
	        \frac{1}{\Gamma(\alpha)}\int_a^x \frac{C_{a^+}^\alpha f(t)}{(x-t)^{1-\alpha}}dt &=& \frac{1}{\Gamma(\alpha)}\int_a^x \frac{f^{(\alpha+)}(a)}{(x-t)^{1-\alpha}}dt + \frac{1}{\Gamma(\alpha)}\int_a^x \frac{C_{a^+}^\alpha f(t) - f^{(\alpha+)}(a)}{(x-t)^{1-\alpha}}dt \\
	        &=& \frac{f^{(\alpha+)}(a)}{\Gamma(\alpha + 1)}(x-a)^\alpha + \frac{1}{\Gamma(\alpha)}\int_a^x \frac{C_{a^+}^\alpha f(t) - f^{(\alpha+)}(a)}{(x-t)^{1-\alpha}}dt
	    \end{eqnarray*}
	    Furthermore,
	    \begin{eqnarray*}
	        \left|\frac{1}{\Gamma(\alpha)}\int_a^x \frac{C_{a^+}^\alpha f(t) - f^{(\alpha+)}(a)}{(x-t)^{1-\alpha}}dt\right| &\leq&\frac{1}{\Gamma(\alpha)} \int_a^x \frac{\max_{t\in[a,x]}\{|C_{a^+}^\alpha f(t) - f^{(\alpha+)}(a)|\}}{(x-t)^{1-\alpha}}dt \\
	        &=& \frac{\max_{t\in[a,x]}\{|C_{a^+}^\alpha f(t) - f^{(\alpha+)}(a)|\}}{\Gamma(\alpha + 1)} (x-a)^\alpha
	    \end{eqnarray*}
	    Hence 
	    \[
	        \lim_{x\rightarrow a, x\geq a} \frac{ \left|\frac{1}{\Gamma(\alpha)}\int_a^x \frac{C_{a^+}^\alpha f(t) - f^{(\alpha+)}(a)}{(x-t)^{1-\alpha}}dt\right|}{(x-a)^\alpha} \leq \lim_{x\rightarrow a, x\geq a} \frac{\max_{t\in[a,x]}\{|C_{a^+}^\alpha f(t) - f^{(\alpha+)}(a)|\}}{\Gamma(\alpha + 1)} = 0
	    \]
	    Thus we proved the result.
	\end{proof}
	A similar mean value theorem holds for the non-local fractional derivative. The following result, which we state for completeness, is a direct consequence of proposition \ref{TaylorExpansionFrac}:	\begin{corollary}
	    Let $f \in C^n([a,b])$ and $n+1 \geq \alpha > n$. Suppose Caputo fractional derivative of order $\alpha$ of $f$ is continuous on $[a,b]$. Then we have the for $x \in [a,b]$, there exists $\xi \in [a,x]$ such that
	    \[
	        f(x) = \sum_{k=0}^{n} f^{(k)}(a)\frac{(x-a)^k}{k!} + \frac{1}{\Gamma(\alpha + 1)}C_{a^+}^\alpha f(\xi)(x-a)^\alpha
	    \]
	    A similar formula holds for the Caputo  right derivative.
	\end{corollary}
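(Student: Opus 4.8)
The plan is to read the conclusion directly off the fractional Taylor expansion of Proposition \ref{TaylorExpansionFrac} and then convert its integral remainder into a single point value via the first mean value theorem for integrals. Since $f \in C^n([a,b])$, $n+1 \geq \alpha > n$, and $C_{a^+}^\alpha f$ is assumed continuous on $[a,b]$ (hence bounded, hence in $L^1([a,b])$), the hypotheses of Proposition \ref{TaylorExpansionFrac} are met and it yields, for $x \in [a,b]$,
\[
    f(x) = \sum_{k=0}^{n} f^{(k)}(a)\frac{(x-a)^k}{k!} + \frac{1}{\Gamma(\alpha)}\int_{a}^x\frac{C_{a^+}^\alpha f(t)}{(x-t)^{1-\alpha}}\,dt.
\]
Assuming $x > a$ (the case $x = a$ being trivial, with any $\xi = a$), it then suffices to show that the integral remainder equals $\tfrac{1}{\Gamma(\alpha+1)}C_{a^+}^\alpha f(\xi)\,(x-a)^\alpha$ for some $\xi \in [a,x]$.

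First I would introduce the weight $w(t) := (x-t)^{\alpha - 1}$ on $[a,x]$. Because $n+1 \geq \alpha > n \geq 0$ forces $\alpha > 0$, we have $\alpha - 1 > -1$, so $w$ is nonnegative and integrable on $[a,x]$ with
\[
    \int_a^x w(t)\,dt = \frac{(x-a)^\alpha}{\alpha} > 0.
\]
Since $C_{a^+}^\alpha f$ is continuous on the compact interval $[a,x]$, it attains its minimum $m$ and maximum $M$ there, and multiplying the inequalities $m \le C_{a^+}^\alpha f(t) \le M$ by $w(t) \ge 0$ and integrating gives
\[
    m \le \frac{\int_a^x C_{a^+}^\alpha f(t)\, w(t)\,dt}{\int_a^x w(t)\,dt} \le M.
\]
By the intermediate value theorem applied to the continuous function $C_{a^+}^\alpha f$, there exists $\xi \in [a,x]$ at which $C_{a^+}^\alpha f(\xi)$ equals this ratio, i.e. $\int_a^x C_{a^+}^\alpha f(t)\, w(t)\,dt = C_{a^+}^\alpha f(\xi)\int_a^x w(t)\,dt$.

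To conclude, I would substitute $\int_a^x w(t)\,dt = (x-a)^\alpha/\alpha$ and use the functional equation $\Gamma(\alpha+1) = \alpha\,\Gamma(\alpha)$, which turns the remainder into $\tfrac{1}{\Gamma(\alpha+1)}C_{a^+}^\alpha f(\xi)\,(x-a)^\alpha$, as claimed. The right-derivative statement follows in exactly the same way from the second expansion in Proposition \ref{TaylorExpansionFrac}, now using the nonnegative integrable weight $(t-x)^{\alpha-1}$ on $[x,b]$.

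The argument is essentially routine, so the only point requiring care is that the weight $w$ is unbounded near $t = x$ whenever $\alpha < 1$; thus one cannot quote the version of the mean value theorem that assumes a \emph{continuous} weight. I expect this to be the sole subtlety, and it is resolved by noting that the sandwiching argument above uses only the nonnegativity and integrability of $w$, both of which hold here since $\alpha > 0$.
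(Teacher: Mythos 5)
Your proof is correct and follows the same route the paper intends: the paper states this corollary as a direct consequence of Proposition \ref{TaylorExpansionFrac}, and you simply make that explicit by applying the fractional Taylor expansion and converting the integral remainder via the weighted mean value theorem for integrals (with the nonnegative, integrable weight $(x-t)^{\alpha-1}$) together with $\Gamma(\alpha+1)=\alpha\Gamma(\alpha)$. Your closing remark correctly identifies and resolves the only subtlety, namely that the weight may be unbounded near $t=x$, so one must use the version of the mean value theorem requiring only nonnegativity and integrability of the weight.
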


	\subsection{A characterization of the local fractional derivative}
	\begin{proposition}\label{almostsurelyzero}
	    Let $p \notin\mathbb{N}$ and  $f \in C^p([a,b])$. If $f^{(p+)}$ exists everywhere on $[a,b]$ then $f^{(p+)} = 0$ almost everywhere on $[a,b]$.
	\end{proposition}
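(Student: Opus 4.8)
The plan is to reduce to fractional order in $(0,1)$ and then observe that a nonzero local fractional derivative forces an infinite one-sided ordinary derivative, which a classical theorem forbids on a set of positive measure. Write $n=\lfloor p\rfloor$ and $\gamma=p-n$, so $\gamma\in(0,1)$ since $p\notin\N$. By the Remark following Definition~\ref{def.localderivative} we have the pointwise identity $C_{a^+}^{p}f=C_{a^+}^{\gamma}f^{(n)}$, and letting $x\to a^+$ gives $f^{(p+)}(a)=\big(f^{(n)}\big)^{(\gamma+)}(a)$. Setting $g:=f^{(n)}\in C([a,b])$, it therefore suffices to prove that $g^{(\gamma+)}=0$ almost everywhere for the continuous function $g$ and the order $\gamma\in(0,1)$.

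Next I would apply the fractional Peano expansion of Corollary~\ref{fracpiano} with $n=0$: at every $a$ (where $g^{(\gamma+)}(a)$ exists by hypothesis),
\[
 g(x)-g(a)=\frac{g^{(\gamma+)}(a)}{\Gamma(\gamma+1)}(x-a)^{\gamma}+o\big((x-a)^{\gamma}\big),\qquad x\to a^+ .
\]
If $c:=g^{(\gamma+)}(a)\neq 0$, dividing by $x-a$ yields
\[
 \frac{g(x)-g(a)}{x-a}=\frac{c}{\Gamma(\gamma+1)}\,(x-a)^{\gamma-1}\big(1+o(1)\big).
\]
Since $\gamma-1<0$, the factor $(x-a)^{\gamma-1}\to+\infty$, so the right-hand difference quotient converges to $+\infty$ when $c>0$ and to $-\infty$ when $c<0$. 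Thus at every point where $g^{(\gamma+)}$ is nonzero, the right-hand ordinary derivative of $g$ exists in the extended sense and is infinite.

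The conclusion then follows from the Denjoy--Young--Saks theorem, which holds for an arbitrary real function and hence for our continuous $g$: for almost every $x$ the lower right Dini derivate $\liminf_{h\downarrow 0}\frac{g(x+h)-g(x)}{h}$ is finite or $-\infty$, and the upper right Dini derivate $\limsup_{h\downarrow 0}\frac{g(x+h)-g(x)}{h}$ is finite or $+\infty$. Consequently the sets on which the right-hand derivative equals $+\infty$ or $-\infty$ are both Lebesgue-null. Since $\{a:g^{(\gamma+)}(a)\neq 0\}$ is contained in their union, it is null, so $g^{(\gamma+)}=0$ a.e.\ and therefore $f^{(p+)}=0$ a.e.

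The main obstacle is the final measure-theoretic step. The reduction and the Peano expansion are routine given the results already established, but the assertion that an infinite one-sided derivative occurs only on a null set is exactly the subtle content of Denjoy--Young--Saks. If one prefers a self-contained argument avoiding that citation, the real work is to show directly that $\{a:\lim_{x\to a^+}(g(x)-g(a))/(x-a)=+\infty\}$ is Lebesgue-null for continuous $g$; this can be done by a Vitali covering argument comparing $g$ on short intervals with lines of arbitrarily large slope, but the Denjoy--Young--Saks classification packages precisely this fact and gives the cleanest route.
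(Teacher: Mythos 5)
Your proof is correct, and it takes a genuinely different route from the paper's at the decisive measure-theoretic step. Both arguments share the same skeleton: reduce to the fractional order $\gamma = p - \lfloor p\rfloor \in (0,1)$ via the identity $f^{(p+)} = (f^{(\lfloor p\rfloor)})^{(\gamma+)}$ from the remark after Definition~\ref{def.localderivative}, and then feed the pointwise existence of the local fractional derivative into the Peano-type expansion of Corollary~\ref{fracpiano}; the paper's own proof does exactly this reduction and starts from the same expansion. Where you diverge is in concluding that the set where the order-$\gamma$ derivative is nonzero is Lebesgue-null. You observe that a nonzero fractional derivative of subunit order forces the ordinary one-sided difference quotient to diverge, since $(x-a)^{\gamma-1}\to\infty$, and you then invoke the Denjoy--Young--Saks theorem: almost everywhere the lower right derivate cannot be $+\infty$ and the upper right derivate cannot be $-\infty$, so one-sided infinite derivatives occur only on a null set. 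Your use of that theorem is accurate (including the point that it requires no measurability of the exceptional set). The paper instead stays self-contained: from a compact positive-measure subset $K$ of the set where the fractional derivative is positive, it manufactures monotonicity via the open covering $\bigcup_{x\in K}(x,x+\delta_x)$ and an argmax argument, extends the restriction of the function to $K\cap \bar{I}_r$ to an increasing function by linear interpolation, and applies only Lebesgue's theorem on a.e.\ differentiability of monotone functions; finiteness of that derivative at accumulation points of $K$ kills the order-$\gamma$ difference quotient because $\gamma<1$, giving the contradiction. What each approach buys: yours is shorter and conceptually transparent, but it outsources the hard measure theory to Denjoy--Young--Saks, a classical yet nontrivial theorem whose standard proof is itself a Vitali-covering argument of depth comparable to the paper's construction; the paper's construction replaces that citation with elementary monotone-function theory, which is precisely its stated purpose (a simple proof ``using only properties of monotone functions,'' in contrast to the more complex published proof it cites). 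Your closing remark that the citation could be replaced by a direct Vitali covering argument identifies exactly the work that the paper's monotone construction is doing by hand.
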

	A rather complex proof of this proposition is given in \cite[Corollary 3]{chen2010local}. We here give a simple proof of this property using only properties of monotone functions.
	\begin{proof}
	    We first suppose $0 < p < 1$. And by Taylor expansion theorem for fractional derivative, we actually have
	    \[
	        f(y) = f(x) + g(x)(y-x)^p + o(|y-x|^p)
	    \]
	    Hence for any sequence $y_n \rightarrow x, y_n > x$, we have 
	    \[
	        \lim_{n\rightarrow\infty} \frac{f(y_n)-f(x)}{(y_n - x)^p} = g(x)
	    \]
	    Now let $E^+ = \{x \in\mathbb{R}: g(x) > 0\}$. If $\mathcal{L}(E^+) > 0$, then $\exists K$ compact set such that $K \in E^+$ and $\mathcal{L}(K) > 0$. Here $\mathcal{L}$ denotes Lebesgue measure on the real line. 
	    
	    Let's consider the open set $\cup_{x \in K}(x, x+\delta_x)$ in $\mathbb{R}$, where $\delta_x$ is the largest number so that $f(y) > f(x),\forall y \in (x,x+\delta_x)$. We can then write $\cup_{x \in K}(x, x+\delta_x) = \cup_{k=1}^\infty I_k$ for some open interval $I_k$. By the additive property of measure, $\exists I_r$ such that $\mathcal{L}(K\cap\bar{I_r}) > 0$. 
	    
	    For any $\bar{x}<\bar{y} \in K\cap\bar{I_r}$, there exists $x_0 \in K\cap [\bar{x},\bar{y}]$ such that 
	    \[
	        x_0 = \argmax_{x \in K\cap[\bar{x},\bar{y}]}\{f(x)\}
	    \]
	    If $x_0\neq \bar{y}$, then there exists $y_0 \in (x_0,x_0+\delta_{x_0})\cap K\cap[\bar{x},\bar{y}]$ such that $f(y_0) > f(x_0)$, which is a contradiction. Hence $x_0 = \bar{y}$ and we have $f(\bar{y}) > f(\bar{x}),\forall \bar{x} < \bar{y} \in K\cap\bar{I_r}$.
	    
	    Now define $\bar{f}: \bar{I_r}\rightarrow \mathbb{R}$ such that $\bar{f} = f$ on $K\cap\bar{I_r}$ and $\bar{f}$ is linear outside of $K\cap\bar{I_r}$. In fact, let $\tilde{x} \in \bar{I_r}$, if $\tilde{x} \notin K$, then $\tilde{x} \in (z_0,z_1)$ with $z_0,z_1 \in K$. By above reason, $\bar{f}(z_1) \geq \bar{f}(z_0)$, and we perform linear interpolation to define the value $\bar{f}(\tilde{x})$ here. It is then easy to conclude that $\bar{f}$ is an increasing function. Hence $\bar{f}$ is differentiable almost everywhere on $\bar{I_r}$.
	    
	    Now we go back to the function $f$ on set $K$. It could be concluded that almost surely on $K$, $\bar{f}$ is differentiable and we can simply suppose here $\bar{f}$ is differentiable and $f = \bar{f}$ on $K$ whose Lebesgue measure is positive. Now choose a point $x_1 \in K$, either $x_1$ is isolated in $K$ or it is an accumulated point in $K$. For the latter case, we would have 
	    \[
	        g(x_1) =  \lim_{n\rightarrow\infty} \frac{f(y_n)-f(x_1)}{(y_n - x)^p} = \lim_{n\rightarrow\infty} \frac{\bar{f}(y_n)-\bar{f}(x_1)}{(y_n - x_1)^p} = 0
	    \]
	    Here $\{y_n\}$ is a sequence in $K$ approximating $x_1$. Since the set of all isolated points in $K$ is of zero measure,
	   we could get $g = 0$ almost surely on $K$, which is a contradiction. Hence $\mathcal{L}(E^+) = 0$. Similarly, $\mathcal{L}(E^-) = 0$. Hence $f^{(p+)}(x) = 0$ almost surely on $\mathbb{R}$.
	    
	    Now if $m<p<m+1$ for some integer $m > 0$, since we have $f^{(p+)} = (f^{(m)})^{(p-m+)}$, we could conclude again that $f^{(p+)}(x) = 0$ almost surely on $\mathbb{R}$.
	\end{proof}
	\begin{remark}
	    Here actually we only need a weaker condition on the function, namely that 
	    \[
	        \lim_{y\rightarrow x, y > x} \frac{f^{(m)}(y)-f^{(m)}(x)}{|y-x|^\alpha}
	    \]
	    exists for $x\in \mathbb{R}$ with $m = \lfloor p \rfloor$ and $p = m + \alpha$, which could be thought as classical definition of $\alpha$ order derivative. We call 
	    \[
	        \lim_{x\rightarrow a, x > a}\frac{f^{(n)}(x) - f^{(n)}(a)}{|x-a|^\alpha}
	    \]
	    left 'classical' fractional derivative of order $\alpha$ if it exists and similar for right derivatives. However, when this limits exists, it is not guaranteed that local fractional derivative of $f$ exists. 
	\end{remark}
	    We can actually give a stronger result. Let $E = \{x\in\mathbb{R}: \lim_{y\rightarrow x, y > x} \frac{f^{(m)}(y)-f^{(m)}(x)}{|y-x|^\alpha} \neq 0\}$. 
	 \begin{proposition}
	     The Hausdorff dimension of the set $E$ is at most $\alpha$.
	 \end{proposition}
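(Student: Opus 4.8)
The plan is to reduce the statement to a box-counting estimate. By definition the set $E$ is built from the continuous function $g := f^{(m)}$, where $m=\lfloor p\rfloor$, through the one-sided classical fractional derivative $g'_\alpha(x):=\lim_{y\to x,\,y>x}(g(y)-g(x))/(y-x)^\alpha$ of order $\alpha=p-m\in(0,1)$, so I work directly with $g$, which is bounded on the compact interval $[a,b]$. Splitting $E=E^+\cup E^-$ according to the sign of $g'_\alpha$ and replacing $g$ by $-g$ in the negative case, it suffices to bound $\dim_H E^+$, where $E^+=\{x:g'_\alpha(x)>0\}$. Since the limit exists and is positive at each point of $E^+$, every $x\in E^+$ admits rationals $\epsilon,\eta>0$ with $g(y)-g(x)\ge \epsilon(y-x)^\alpha$ for all $y\in(x,x+\eta]\cap[a,b]$; hence
\[
E^+=\bigcup_{\epsilon,\eta\in\mathbb{Q},\,\epsilon,\eta>0} A_{\epsilon,\eta},\qquad A_{\epsilon,\eta}:=\bigl\{x\in[a,b]: g(y)-g(x)\ge \epsilon(y-x)^\alpha\ \forall\,y\in(x,x+\eta]\cap[a,b]\bigr\}.
\]
As Hausdorff dimension is stable under countable unions, it is enough to prove $\dim_H A_{\epsilon,\eta}\le\alpha$ for each fixed pair.

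The decisive observation is that $A_{\epsilon,\eta}$ carries a one-sided expanding property: if $x_1<x_2$ both lie in $A_{\epsilon,\eta}$ with $x_2-x_1\le\eta$, then $g(x_2)-g(x_1)\ge \epsilon(x_2-x_1)^\alpha$. I would combine this lower bound on increments of $g$ with the trivial upper bound coming from boundedness of $g$. Fix a small scale $\delta>0$, partition $[a,b]$ into $L=\lceil (b-a)/\eta\rceil$ subintervals of length $<\eta$, and in each such subinterval $J$ choose a maximal $\delta$-separated family $x_1<\dots<x_N$ in $A_{\epsilon,\eta}\cap J$. The consecutive gaps are at least $\delta$ and lie within $\eta$, so telescoping together with monotonicity of $t\mapsto t^\alpha$ gives $2\sup_{[a,b]}|g|\ge g(x_N)-g(x_1)\ge (N-1)\,\epsilon\,\delta^\alpha$, whence $N\le 1+2\epsilon^{-1}\sup_{[a,b]}|g|\,\delta^{-\alpha}$. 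By maximality the intervals of radius $\delta$ centred at these points cover $A_{\epsilon,\eta}\cap J$; summing over the $L$ subintervals shows $A_{\epsilon,\eta}$ is covered by at most $C\delta^{-\alpha}$ intervals of length $2\delta$, with $C=C(\epsilon,\eta,g,a,b)$ independent of $\delta$.

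From here the conclusion is immediate: for any $s>\alpha$ the $s$-dimensional Hausdorff pre-measure at scale $2\delta$ satisfies $\mathcal{H}^s_{2\delta}(A_{\epsilon,\eta})\le C\delta^{-\alpha}(2\delta)^s=2^sC\,\delta^{\,s-\alpha}\to 0$ as $\delta\to0$, so $\mathcal{H}^s(A_{\epsilon,\eta})=0$ and $\dim_H A_{\epsilon,\eta}\le\alpha$. Taking the supremum over the countably many pairs $(\epsilon,\eta)$ yields $\dim_H E^+\le\alpha$, and applying the same argument to $-g$ gives $\dim_H E^-\le\alpha$, hence $\dim_H E\le\alpha$. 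I expect the only delicate point to be securing the two-sided control on $g$: the lower bound on increments must be read off from membership in $A_{\epsilon,\eta}$ — which is precisely why one needs the defining limit of $E$ to be bounded away from zero on one side, guaranteed because it exists and is nonzero — while the matching upper bound is simply the oscillation estimate $2\sup|g|$. Everything else is the standard telescoping/covering computation sketched above, and it is worth emphasising that this argument refines Proposition \ref{almostsurelyzero} since a set of Hausdorff dimension at most $\alpha<1$ is automatically Lebesgue-null.
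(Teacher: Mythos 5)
Your proof is correct and follows essentially the same route as the paper: both decompose $E$ (after splitting by sign) into countably many sets on which the uniform one-sided growth bound $f^{(m)}(y)-f^{(m)}(x)\ge\epsilon\,(y-x)^\alpha$ holds up to a fixed scale, and both exploit the resulting telescoping lower bound on increments of the bounded function $f^{(m)}$ to limit how many points of such a set can be $\delta$-separated. The only difference is packaging: the paper argues by contradiction, using covers induced by uniform partitions and the divergence of $\sum_i |I_k^i|^\alpha$ when the dimension exceeds $\alpha$, whereas you count maximal $\delta$-separated points directly and send $\mathcal{H}^s_{2\delta}\to 0$ for $s>\alpha$ --- a cleaner finish that in fact bounds the upper box-counting dimension.
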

	 \begin{proof}
	     For each $\epsilon, \delta$, we define the set $E_{\epsilon+}^\delta$ to be the subset of $E$ such that for all $x \in E_{\epsilon+}^\delta$, we have
	     \[
	        f(y) > f(x) + \epsilon(y-x)^\alpha, \forall x<y<x+\delta
	     \]
	     Then by the countably additive of measure, we only need to show the Hausdorff dimension of the set $E_{\epsilon+}^\delta$ is zero. Furthermore, we can restrict the set $E_{\epsilon+}^\delta$ on the interval $[0,\delta]$ due to the countably additive of measure. Hence we will work on $[0,\delta]$ with $E_{\epsilon+}^\delta$. 
	     
	     First of all, suppose the Hausdorff dimension of set $E_{\epsilon+}^\delta$ is larger than $\alpha$. In this case, we considers partitions $\{0,\frac{\delta}{k},\cdots,\frac{(k-1)\delta}{k},\delta\}$ for $k\in\mathbb{N}$. In each interval $[\frac{i\delta}{k},\frac{(i+1)\delta}{k}]$, we choose the leftmost and rightmost points of the set $E_{\epsilon+}^\delta$ and denote this interval as $I_k^i$(if exists). Then $\cup_{i=0}^{k-1} I_k^i$ covers the set $E_{\epsilon+}^\delta$ and by the definition of Hausdorff measure, we know that $\sum_{i=0}^{k-1}|I_k^i|^\alpha \rightarrow \infty$ as $k\rightarrow \infty$. Now let $a_k$ be the left boundary point of $\cup_{i=0}^{k-1} I_k^i$ and $b_k$ be the right boundary point of $\cup_{i=0}^{k-1} I_k^i$. We know 
	     \[
	        f(b_k) - f(a_k) > \epsilon \sum_{i=0}^{k-1}|I_k^i|^\alpha \rightarrow \infty
	     \]
	     which gives a contradiction when $k\rightarrow\infty$.
	 \end{proof}
	
	 The following result is a  consequence of the fractional Taylor expansion \eqref{eq.piano1}-\eqref{eq.piano2}:
	\begin{proposition}
	    Suppose $f \in C^\alpha([a,b])$, then for any $\beta < \alpha$ and $\beta \notin\mathbb{N}$, we have $f^{(\beta+)}(x) = 0$ if $f^{(\beta+)}(x)$ exists. And for $\beta > \alpha$ with same integer part, $f^{(\beta+)}(x)$ will  either be $\pm\infty$ or not exist.
	\end{proposition}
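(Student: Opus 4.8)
The plan is to derive both assertions by playing off against each other the two fractional Taylor expansions supplied by Corollary \ref{fracpiano}: one of order $\alpha$ (available because $f \in C^\alpha([a,b])$ provides $f^{(\alpha+)}(x)$) and one of order $\beta$ (which we either assume exists or assume for contradiction). Fix the base point $x$, write $n = \lfloor \alpha \rfloor$ and $m = \lfloor \beta \rfloor$, and let $P_j(y) = \sum_{k=0}^{j}\frac{f^{(k)}(x)}{k!}(y-x)^k$ denote the order-$j$ Taylor polynomial of $f$ at $x$; since $f \in C^n \subseteq C^m$ these polynomials are well defined. The engine is \eqref{eq.piano1}: whenever $f^{(\gamma+)}(x)$ exists, $f(y) = P_{\lfloor\gamma\rfloor}(y) + \frac{1}{\Gamma(\gamma+1)}f^{(\gamma+)}(x)(y-x)^{\gamma} + o(|y-x|^{\gamma})$ as $y \to x^{+}$.

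For the first statement ($\beta < \alpha$, $\beta \notin \mathbb{N}$) I would subtract the common polynomial $P_m$ and study $g(y) := f(y) - P_m(y)$. On one side, the order-$\beta$ expansion gives $g(y)/(y-x)^{\beta} \to f^{(\beta+)}(x)/\Gamma(\beta+1)$. On the other side, the order-$\alpha$ expansion writes $g(y) = \sum_{k=m+1}^{n}\frac{f^{(k)}(x)}{k!}(y-x)^{k} + \frac{1}{\Gamma(\alpha+1)}f^{(\alpha+)}(x)(y-x)^{\alpha} + o(|y-x|^{\alpha})$. Here $\beta \notin \mathbb{N}$ forces $m+1 > \beta$, so every surviving power — the integer powers $k \ge m+1$ and the fractional power $\alpha > \beta$ — is strictly larger than $\beta$; dividing by $(y-x)^{\beta}$ leaves each term with a positive power of $(y-x)$, so the quotient tends to $0$. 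Matching the two limits yields $f^{(\beta+)}(x) = 0$.

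The second statement ($\alpha < \beta < n+1$, same integer part $n$) is the same comparison run in reverse, by contradiction. Suppose $f^{(\beta+)}(x)$ existed as a finite number. Both expansions now share the polynomial $P_n$, and $g(y) := f(y) - P_n(y)$ equals $\frac{1}{\Gamma(\alpha+1)}f^{(\alpha+)}(x)(y-x)^{\alpha} + o(|y-x|^{\alpha})$ on one side and $\frac{1}{\Gamma(\beta+1)}f^{(\beta+)}(x)(y-x)^{\beta} + o(|y-x|^{\beta})$ on the other. Dividing by $(y-x)^{\alpha}$ and letting $y \to x^{+}$, the first representation tends to $f^{(\alpha+)}(x)/\Gamma(\alpha+1)$ while the second tends to $0$ (because $\beta - \alpha > 0$). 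This is impossible once $f^{(\alpha+)}(x) \neq 0$, so a finite order-$\beta$ local fractional derivative cannot exist; it must be $\pm\infty$ or fail to exist.

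The main obstacle, which I would flag explicitly, is the degenerate case $f^{(\alpha+)}(x) = 0$ in the second assertion: there the contradiction evaporates, and indeed $f(y) = (y-x)^{n+1}$ is a counterexample, being $C^{\infty}$ with all local fractional derivatives of orders in $(n,n+1)$ equal to zero. Thus the second claim should be read at points where the order-$\alpha$ local derivative is genuinely nonzero, i.e. where $\alpha$ is the exact fractional order of $f$, and I would make this hypothesis explicit. The only remaining verifications are routine bookkeeping: that $f \in C^{n}$ legitimises every Taylor polynomial invoked, that $m \le n$, and that $o(|y-x|^{\alpha}) = o(|y-x|^{\beta})$ whenever $\alpha > \beta$, each of which is immediate.
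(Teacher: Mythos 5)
The paper offers no actual proof of this proposition (it is merely asserted to be ``a consequence of the fractional Taylor expansion \eqref{eq.piano1}--\eqref{eq.piano2}''), and your overall strategy --- playing expansions of two different orders against each other at the same base point --- is clearly the intended route. But your execution rests on a false premise: you claim that $f \in C^\alpha([a,b])$ ``provides $f^{(\alpha+)}(x)$''. In this paper $C^\alpha$ for non-integer $\alpha$ is the H\"older space (see the H\"older semi-norms used to define $E^p$ before Theorem \ref{itoformula2}), and H\"older regularity does \emph{not} imply existence of the local fractional derivative: the paper deliberately keeps these hypotheses separate --- Proposition \ref{almostsurelyzero} assumes $f\in C^p$ \emph{and}, separately, that $f^{(p+)}$ exists everywhere, and the remark following it stresses that existence of such limits is not automatic. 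So the ``order-$\alpha$ expansion'' you invoke on one side of both comparisons is not available. For the first assertion this is repairable without changing your structure: you never need an expansion of order $\alpha$, only the estimate $f(y)-P_n(y)=O(|y-x|^{\alpha})$, which follows from Taylor's theorem with integral remainder together with the $(\alpha-n)$-H\"older continuity of $f^{(n)}$ (for $n=0$ it is the H\"older bound itself). Then $g(y)=f(y)-P_m(y)$ is a sum of integer powers $(y-x)^k$ with $k\geq m+1>\beta$ plus $O(|y-x|^{\alpha})$, hence $o(|y-x|^{\beta})$, and comparison with the order-$\beta$ expansion of Corollary \ref{fracpiano} --- which \emph{is} legitimate, since $f^{(\beta+)}(x)$ is assumed to exist --- gives $f^{(\beta+)}(x)=0$.

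For the second assertion, your diagnosis is correct and worth stating: read literally it fails, since any smooth function (e.g.\ $(y-x)^{n+1}$, consistent with Lemma \ref{lemCaputo}) lies in $C^\alpha$ and has $f^{(\beta+)}(x)=0$, finite, for every non-integer $\beta$; some non-degeneracy of $f$ at $x$ must be assumed. However, your repair --- assuming $f^{(\alpha+)}(x)$ exists and is nonzero --- re-imports exactly the unjustified existence assumption above, and is stronger than necessary. The minimal hypothesis is exactness of order $\alpha$ at $x$: assume $f(y)-P_n(y)\neq o((y-x)^{\alpha})$ as $y\to x^{+}$. If $f^{(\beta+)}(x)$ existed and were finite, Corollary \ref{fracpiano} at order $\beta$ would give $f(y)-P_n(y)=\frac{f^{(\beta+)}(x)}{\Gamma(\beta+1)}(y-x)^{\beta}+o((y-x)^{\beta})=o((y-x)^{\alpha})$ because $\beta>\alpha$, a contradiction; this version requires no order-$\alpha$ derivative at all. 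With these two substitutions your argument becomes a complete proof of the (correctly formulated) proposition.
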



	\newpage\section{Fractional It\^o Calculus}
\label{sec.fractionalIto}

	In this section we  derive  It\^o-type change of variable formulas for smooth functions of paths with  $p-$th variation along a sequence of partitions $\{\pi_n\}_{n\in\mathbb{N}}$, first for functions (Sections \ref{sec.timeindependent} and \ref{sec.timedependent}) then for path-dependent functionals (Section \ref{sec.pathdependent}). In each case the focus is the existence or not of a 'fractional' It\^o remainder term: we will see that the existence of a non-zero It\^o term depends on the fine structure of the function and its fractional derivative.
	
	\subsection{A change of variable formula  for time-independent functions}\label{sec.timeindependent}
	We now derive an It\^o-type change of variable formula for smooth functions of paths with  $p-$th variation along a sequence of partitions $\{\pi_n\}_{n\in\mathbb{N}}$. 
	
	Let $S\in V_p(\pi)$ be a path which admits $p-$th order variation along some sequence of partitions $\{\pi_n\}_{n\in\mathbb{N}}$. We make the following assumptions:
	\begin{assumption}\label{ass1}
	    For any $k\in\mathbb{R}$, we have 
	    \[
	        \int_0^T \1_{\{S(t)=k\}}d[S]_\pi^p = 0
	    \]
	\end{assumption}
      Let $m = \lfloor p \rfloor$.
	\begin{assumption}\label{ass3}
	   $f\in C^m(\mathbb{R})$ and  admits a left local fractional derivative of order $p$ everywhere.
	\end{assumption}

	\begin{assumption}\label{ass4}
	    The set 	\[
	    \Gamma_f = \{x \in \mathbb{R}: \exists U \ni x \text{ open, } C_{a^+}^p f(b) \text{ is continuous on }\{(a,b) \in U\times U: a \leq b\} \}
	\] is locally finite, i.e. for any compact set $K \in \mathbb{R}$, the set $\Gamma_f^c \cap K$ has only finite number of points.
	\end{assumption}

	    Assumption $\ref{ass1}$ will be automatically satisfied if the occupation measure  of  $S$ is atom-less. In fact, 
	    \[
	        \Gamma_S(\{k\}) = \int_0^T \1_{\{S(t) = k\}} d[S]_\pi(t)
	    \]
	    	    We first give a simple lemma regarding  the set $\Gamma_f$:
	    \begin{lemma}\label{lemGamma_f} 
	        $\forall x\in \Gamma_f$, the left local fractional derivative of order $p$ of $f$ exists and equals to zero, i.e.  $f^{(p+)}(x) = 0$
	    \end{lemma}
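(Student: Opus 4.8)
The plan is to exploit the \emph{joint} continuity that is built into the definition of $\Gamma_f$ in order to identify the one-sided limit defining the local fractional derivative with the diagonal value of a continuous function, and then to invoke Proposition \ref{almostsurelyzero} to pin that value down to zero. The point is that ``almost everywhere zero'' plus ``continuous'' forces ``identically zero''.

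First I would fix $x \in \Gamma_f$ and choose an open neighbourhood $U \ni x$ on which $G(a,b) := C_{a^+}^p f(b)$ is continuous on the closed half $\{(a,b) \in U \times U : a \le b\}$, which includes the diagonal. For a fixed base point $a \in U$, restricting $G$ to the ray $\{(a,b) : b \ge a\}$ and letting $b \downarrow a$ keeps $(a,b)$ inside the domain $\{a \le b\}$ and sends it to the diagonal point $(a,a)$; joint continuity then forces the one-sided limit $\lim_{b \to a^+} G(a,b)$ to exist and to equal $G(a,a)$. By Definition \ref{def.localderivative} this limit is exactly $f^{(p+)}(a)$, so $f^{(p+)}$ exists at \emph{every} point of $U$. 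Moreover, since $a \mapsto f^{(p+)}(a) = G(a,a)$ is the composition of the continuous diagonal embedding $a \mapsto (a,a)$ with the continuous function $G$, the map $a \mapsto f^{(p+)}(a)$ is continuous on $U$.

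Next I would pick a compact subinterval $[c,d] \subset U$ with $x \in (c,d)$. Since $f \in C^m$ with $m = \lfloor p \rfloor$, $p \notin \mathbb{N}$, and $f^{(p+)}$ has just been shown to exist everywhere on $[c,d]$, the hypotheses of Proposition \ref{almostsurelyzero} are met, and it yields $f^{(p+)} = 0$ Lebesgue-almost everywhere on $[c,d]$. But from the previous step $f^{(p+)}$ is continuous on $[c,d]$, and a continuous function vanishing almost everywhere vanishes identically; hence $f^{(p+)} \equiv 0$ on $[c,d]$, and in particular $f^{(p+)}(x) = 0$.

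The one delicate step is the identity $\lim_{b \to a^+} G(a,b) = G(a,a)$: this is precisely where one needs joint continuity of $(a,b) \mapsto C_{a^+}^p f(b)$ rather than mere continuity in the evaluation point $b$ for a frozen base point $a$. Joint continuity is what simultaneously guarantees that the diagonal value is well defined, that it is approached as $b \downarrow a$, and that it varies continuously with $a$; these three facts together are exactly what upgrades the almost-everywhere statement of Proposition \ref{almostsurelyzero} to the pointwise conclusion $f^{(p+)}(x)=0$ on all of $\Gamma_f$. The remaining arguments are routine.
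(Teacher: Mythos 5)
Your proof is correct and takes essentially the same route as the paper's own (much terser) argument: joint continuity of $(a,b)\mapsto C_{a^+}^p f(b)$ near the diagonal gives existence and continuity of $f^{(p+)}$ on $U$, and then Proposition \ref{almostsurelyzero} together with the fact that a continuous function vanishing almost everywhere vanishes identically forces $f^{(p+)}(x)=0$. The only difference is that you spell out the diagonal-limit identification $\lim_{b\to a^+}C_{a^+}^p f(b)=C_{a^+}^p f(a)$ and the restriction to a compact subinterval, steps the paper leaves implicit.
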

	    \begin{proof}
	       Let $x \in \Gamma_f$. There exists $x \in U$ open such that $C_{a+}^p f(b)$ is continuous on $\{(a,b)\in U\times U: a\leq b\}$. Hence the function $f^{(p+)}(x)$ is continuous on $U$. Due to the fact that $f^{(p+)}$ is zero almost surely, we know $f^{(p+)}(x) = 0$.
	    \end{proof}
	    
	    Assumption $\ref{ass1}$ will be satisfied in particular if $S$ admits an occupation density \cite{geman1980},  then . However, as the following example shows, Assumption \ref{ass1} may fail to be satisfied even if the path  has a non-zero $p-$th order variation. 
	   
	\begin{remark}[A counterexample] \label{counterass1}
	    We now give an example of path failing to satisfy Assumption \ref{ass1}, in the spirit of \cite[Example 3.6.]{davis2018}.\footnote{  \cite[Example 3.6]{davis2018} aims to construct a path which admits quadratic variation but does not possess local time along some sequence of partitions. In fact there seems to be an issue with the construction in \cite{davis2018} but the underlying idea is still useful for our construction.}
	    
	    Let $p > 2$ and define the intervals
	    \begin{eqnarray}
	        I_1^1 = \left(\frac{1}{3},\frac{2}{3}\right), \quad I_2^1 = \left(\frac{1}{9},\frac{2}{9}\right), \quad I_2^2 = \left(\frac{7}{9},\frac{8}{9}\right), \cdots, I_j^i, j = 1,\cdots,2^{i-1}\label{eq.Iji}
	    \end{eqnarray}
	    and let $C = [0,1]\slash \cup_{i=1}^\infty\cup_{j=1}^{2^{i-1}} I^i_j$, which is the Cantor ternary set \cite{cantor1884}. Let $c:[0,1]\to \mathbb{R}_+$ be the associated Cantor function, which is defined by
	    \[
	        c(x) = \left\{
	        \begin{array}{ll}
	             \sum_{n=1}^{\infty} \frac{a_n}{2^n}, & x = \sum_{n=1}^{\infty} \frac{2a_n}{3^n} \in C, \quad a_n \in \{0,1\}  \\
	             \sup_{y\leq x, y \in C} c(y),& x \in [0,1]\slash C
	        \end{array}
	        \right.
	    \]
	    We can see it is a non-decreasing function increasing only on Cantor set $C$.
	    
	    Consider the function 
	    \[
	        S(t) = |2^{\log_3(2\cdot \min_{u\in C}|t-u|)}|^{\frac{1}{p}}.
	    \]
	    We are going to construct a sequence of partitions such that the $p-$th variation of $S$ along this sequence will be the Cantor function $c$. In this case, we will see 
	    \[
	        \int_0^1 \1_{\{S(t)=0\}}d[S]^p = \int_0^1 \1_{t\in C}dc(t) = c(1) - c(0) = 1
	    \]
	    which shows Assumption \ref{ass1} is not satisfied.
	    
	    We begin with the partition $\{\pi_{j,n}^i\}$, which denotes the $n-$th partitions in the interval $I_j^i$. Define $t_{j,n}^{i,0} = \inf I_j^i$ and 
	    \[
	        t_{j,n}^{i,k+1} = \inf\left\{t > t_{j,n}^{i,k}: S(t) \in \left(\frac{1}{k_n}\sup_{t \in I_j^i} S(t)\right)\mathbb{Z}\right\}
	    \]
	    then $t_{j,n}^{i,2k_n} = \sup I_j^i$. $k_n$ is an integer to be determined. We then do the calculation 
	    \[
	    \sum_{k=0}^{2k_n - 1}|S(t_{j,n}^{i,k+1}) - S(t_{j,n}^{i,k})|^p = \sum_{k=0}^{2k_n-1}\left|\frac{1}{k_n}2^{-\frac{i}{p}}\right|^p = 2^{1-i}k_n^{1-p}
	    \]
	    Then the sum for the $n-$th partition will be 
	    \[
	        \sum_{i=1}^n\sum_{j=1}^{2^{i-1}}2^{1-i}k_n^{1-p} = n\cdot k_n^{1-p}
	    \]
	    We choose $k_n = \lfloor n^{\frac{1}{p-1}} \rfloor $. Then 
	    \[
	        nk_n^{1-p} \geq n\cdot(n^{\frac{1}{p-1}})^{1-p} = 1
	    \]
	    and 
	    \[
	        nk_n^{1-p} \leq n\cdot(n^{\frac{1}{p-1}} - 1)^{1-p} = \left(1 - n^{\frac{1}{1-p}}\right)^{1-p} \rightarrow 1
	    \]
	    as $n\rightarrow\infty$. Hence we see 
	    \[
	        [S]_\pi^p(1) = 1 
	    \]
	    Furthermore, since 
	    \[
	        2^{1-i}k_n^{1-p} \rightarrow 0
	    \]
	    we see that $[S]_\pi^p$ will not change on the interval $I_j^i$ and hence due to symmetry, we finally can show that $[S]^p(t) = c(t)$, the Cantor function.
	\end{remark}
	\begin{remark}\label{rem.ass4}
	    Assumption \ref{ass4} is necessary for Theorem \ref{itoformula1}. There are continuous function with $f^{(p+)} = 0$ everywhere but no joint continuity for the Caputo derivative. We note here that the joint continuity can lead to the uniform convergence of the equation
	    \[
	        \lim_{y\rightarrow x}\frac{f(y) - f(x)}{|y-x|^\alpha} = 0
	    \]
	    for $x \in K$ with compact set $K$.
	\end{remark}
    The following theorem extends the result of \cite{cont2018pathwise} to the case of functions with fractional regularity:
    \begin{theorem}\label{itoformula1}
       Let $S\in V_p(\pi)$ satisfying assumptions \ref{ass1}. If $f$ is a continuous function satisfying assumptions $\ref{ass3}$ and $\ref{ass4}$ for $m = \lfloor p \rfloor$, then  
        \[
	     \forall t\in [0,T],\qquad   f(S(t)) = f(S(0)) + \int_0^T (f'\circ S).dS
	    \]
	    where the last term is a limit of compensated Riemann sums of order $m = \lfloor p \rfloor$:
	    \[
	       \int_0^T (f'\circ S).dS = \lim_{n\rightarrow\infty}\sum_{t_i\in\pi_n} \sum_{j=1}^m \frac{f^{(j)}(S(t\wedge t_i))}{j!}(S(t\wedge t_{j+1}) - S(t\wedge t_j))^j.
	    \]
    \end{theorem}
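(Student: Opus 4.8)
The plan is to combine a telescoping decomposition with the fractional Taylor expansion of Corollary \ref{fracpiano} and to show that the resulting remainder is negligible in the limit. Fix $t\in[0,T]$, write $\Delta_i:=S(t\wedge t^n_{i+1})-S(t\wedge t^n_i)$, $m=\lfloor p\rfloor$ and $\alpha:=p-m\in(0,1)$. Telescoping gives
\[
f(S(t))-f(S(0))=\sum_{[t_i,t_{i+1}]\in\pi_n}\big(f(S(t\wedge t^n_{i+1}))-f(S(t\wedge t^n_i))\big).
\]
For each increment I would apply \eqref{eq.piano1}--\eqref{eq.piano2} (according to the sign of $\Delta_i$), expanding around the left endpoint $S(t\wedge t^n_i)$ to order $m$. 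The terms of order $1$ through $m$ assemble exactly into the compensated Riemann sum appearing in the statement, so it suffices to prove that the collected remainder
\[
R_n:=\sum_{[t_i,t_{i+1}]\in\pi_n}\Big(f(S(t\wedge t^n_{i+1}))-\sum_{k=0}^m\frac{f^{(k)}(S(t\wedge t^n_i))}{k!}\Delta_i^k\Big)\longrightarrow 0.
\]
Since the integral is defined as the limit of the Riemann sum, its existence and the formula then follow simultaneously.

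To analyse $R_n$ I would first isolate the genuinely ``fractional'' contribution. By Corollary \ref{fracpiano} each summand equals $\tfrac{1}{\Gamma(p+1)}f^{(p+)}(S(t\wedge t^n_i))\,|\Delta_i|^p+o(|\Delta_i|^p)$. By Lemma \ref{lemGamma_f} together with Proposition \ref{almostsurelyzero}, $f^{(p+)}$ vanishes on $\Gamma_f$, whose complement is locally finite by Assumption \ref{ass4}; hence on the compact range $K:=S([0,T])$ the bad values $\{k_1,\dots,k_L\}=\Gamma_f^c\cap K$ are finite in number and $f^{(p+)}$ is bounded on $K$, say by $M$. Consequently the leading part of $R_n$ is supported on indices with $S(t\wedge t^n_i)\in\{k_1,\dots,k_L\}$ and is dominated by $\tfrac{M}{\Gamma(p+1)}\sum_{l=1}^L\sum_{i:\,S(t\wedge t^n_i)=k_l}|\Delta_i|^p$. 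Each inner sum is the $\mu^n$-mass of a set contained in the closed level set $S^{-1}(k_l)$; since $\mu^n\Rightarrow[S]^p$ and, by Assumption \ref{ass1}, $[S]^p(S^{-1}(k_l))=\int_0^T\mathbf{1}_{\{S=k_l\}}\,d[S]^p=0$, an outer-regularity argument (choose a neighbourhood of $S^{-1}(k_l)$ of small $\mu$-mass with $\mu$-null boundary, then use weak convergence) shows each inner sum tends to $0$. This kills the fractional term.

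It then remains to control the $o(|\Delta_i|^p)$ part, where Assumption \ref{ass4} is essential. I would fix $\epsilon>0$, cover the bad levels by $U_\epsilon:=\bigcup_l(k_l-\epsilon,k_l+\epsilon)$, and split the indices according to whether $S(t\wedge t^n_i)\in U_\epsilon$. On the good region $K\setminus U_\epsilon$ the Caputo derivative $C^p_{a^+}f(b)$ is jointly continuous and $f^{(p+)}\equiv 0$, which (as noted in Remark \ref{rem.ass4}) upgrades the pointwise Peano estimate to a uniform one: there is a nondecreasing modulus $\eta_\epsilon(\delta)\to0$ with $|r_i|\le\eta_\epsilon(|\Delta_i|)\,|\Delta_i|^p$ whenever the base point lies in $K\setminus U_\epsilon$. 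Since $\op{osc}(S,\pi_n)\to0$ (Definition \ref{def:p-var}) and $\sum_i|\Delta_i|^p\to[S]^p(t)$ is bounded, the good part is at most $\eta_\epsilon\big(\op{osc}(S,\pi_n)\big)\,[S]^p(t)\to0$ as $n\to\infty$ for fixed $\epsilon$. On the bad region one uses that $\limsup_n\sum_{i:\,S(t\wedge t^n_i)\in U_\epsilon}|\Delta_i|^p\le[S]^p\big(\overline{S^{-1}(U_\epsilon)}\big)$, which decreases to $0$ as $\epsilon\downarrow0$ by Assumption \ref{ass1}.

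The main obstacle is precisely this bad region. Near the points of $\Gamma_f^c$ the Caputo derivative loses joint continuity, so the little-$o$ in the fractional Taylor expansion is \emph{not} uniform and cannot be summed termwise; one must instead trade it against the vanishing $[S]^p$-mass supplied by Assumption \ref{ass1}. Doing so requires a bound of the form $|r_i|\le C|\Delta_i|^p$ valid uniformly over all base points in $K$, including those approaching $\Gamma_f^c$, which I would extract from the integral form of the remainder in Proposition \ref{TaylorExpansionFrac} using boundedness of $f^{(p+)}$ and continuity of $f^{(m)}$ on $K$ (recalling $f^{(p+)}=(f^{(m)})^{(\alpha+)}$). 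Combining this uniform bound with the smallness of the $p$-th variation mass on $U_\epsilon$ and passing first to $n\to\infty$ and then to $\epsilon\downarrow0$ yields $R_n\to0$, completing the proof.
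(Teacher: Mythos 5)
Your overall architecture (telescoping, isolating the finitely many bad levels $\Gamma_f^c\cap K$ via Assumption \ref{ass1}, and using joint continuity of the Caputo derivative away from them via Assumption \ref{ass4}) matches the paper's proof, but there are two genuine gaps. The first is the decreasing increments. When $\Delta_i<0$ you invoke \eqref{eq.piano2} around $b=S(t\wedge t^n_i)$, which requires the existence of the \emph{right} local fractional derivative $f^{(p-)}$ at $S(t\wedge t^n_i)$ and, to make the leading term vanish, its vanishing on $\Gamma_f$. Neither is available: Assumption \ref{ass3} grants only \emph{left} local fractional derivatives, Lemma \ref{lemGamma_f} and Proposition \ref{almostsurelyzero} are statements about $f^{(p+)}$, and Assumption \ref{ass4} controls only $C^p_{a^+}f(b)$ on $\{a\leq b\}$. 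The paper never expands around the larger space value; for decreasing increments it keeps the classical Taylor integral remainder and rewrites it as a double integral of \emph{left} Caputo derivatives with moving base points,
\[
\frac{1}{\Gamma(m)\Gamma(\alpha)}\int_{S(t_{i+1})}^{S(t_i)}\int_{S(t_{i+1})}^{s}\bigl(S(t_{i+1})-r\bigr)^{m-1}\frac{C^p_{r^+}f(s)}{\bigl(S(t_i)-s\bigr)^{1-\alpha}}\,dr\,ds,
\]
and then uses the joint continuity in the pair $(a,b)$ together with $C^p_{r^+}f(r)=0$ to produce the modulus bound. This is exactly why Assumption \ref{ass4} asks for continuity jointly in both arguments rather than in $b$ alone; your route either needs an additional hypothesis (existence and vanishing of right local derivatives) or this rewriting.

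The second gap is your uniform estimate $|r_i|\leq C|\Delta_i|^p$ on the bad region. You propose to extract it from the integral remainder of Proposition \ref{TaylorExpansionFrac} using boundedness of $f^{(p+)}$ and continuity of $f^{(m)}$. Neither suffices. The fractional integral remainder involves the \emph{non-local} Caputo derivative $C^p_{a^+}f(\cdot)$, and boundedness of the local derivative $f^{(p+)}$ does not bound it near the bad levels (Example \ref{xalpha} shows that for $f=|x|^p$ and $a<0$ the Caputo derivative is singular at $0$, even though $f^{(p+)}$ is finite everywhere); and mere continuity, or even pointwise fractional differentiability, of $f^{(m)}$ cannot be chained into a H\"older bound, since $\sum_i\delta_i^\alpha\geq\bigl(\sum_i\delta_i\bigr)^\alpha$ goes the wrong way for such an argument. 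What is actually needed — and what the paper uses, via the \emph{classical} Taylor integral remainder and the constant $M=\Gamma(p-m+1)\|f\|_{C^p}/\Gamma(p+1)$ — is finiteness of the local $\alpha$-H\"older seminorm of $f^{(m)}$, i.e. $f\in C^p$ in the H\"older sense; that is the correct ingredient to pair with the vanishing $[S]^p$-mass near the bad levels, and your proof should assume or derive it rather than the weaker pointwise statements.
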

    \begin{proof}[Proof of Theorem \ref{itoformula1}]
        We first suppose that for any $t_i \in \pi_n$ other than $0$ and $T$, $f^{(p+)}(S_{t_i}) = 0$. By Taylor's formula with integral remainder, we have 
        \begin{eqnarray*}
	    &&f(S(t)) - f(S(0)) = \sum_{[t_i,t_{i+1}]\in\pi_n} f(S(t\wedge t_{i+1})) - f(S(t\wedge t_i)) \\
	    &=&  \sum_{[t_i,t_{i+1}]\in\pi_n}\left(f(S(t\wedge t_{i+1})) - f(S(t\wedge t_i)) - \cdots - \frac{f^{(n)}(S(t\wedge t_i))}{n!}(S(t\wedge t_{i+1}) - S(t\wedge t_i))^n\right) \\
	    &+& \sum_{[t_i,t_{i+1}]\in\pi_n} \sum_{j=1}^m \frac{f^{(j)}(S(t\wedge t_i))}{j!}(S(t\wedge t_{i+1}) - S(t\wedge t_i))^j \\
	    &=& L^n + \sum_{[t_i,t_{i+1}]\in\pi_n} \frac{1}{\Gamma(m)}\int_{S(t\wedge t_i)}^{S(t\wedge t_{i+1})}(f^{(m)}(r) - f^{(m)}(S(t\wedge t_i)))(S(t\wedge t_{i+1}) - r)^{m-1}dr
	\end{eqnarray*}
	where $L^n = \sum_{[t_i,t_{i+1}]\in\pi_n} \sum_{j=1}^m \frac{f^{(j)}(S(t\wedge t_i))}{j!}(S(t\wedge t_{i+1}) - S(t\wedge t_i))^j$. It is easy to notice that there exists a constant $M>0$ and 
	\[
	    \left|\frac{1}{\Gamma(m)}\int_{S(t\wedge t_i)}^{S(t\wedge t_{i+1})}(f^{(m)}(r) - f^{(m)}(S(t\wedge t_i)))(S(t\wedge t_{i+1}) - r)^{m-1}dr\right| \leq M|S(t\wedge t_{i+1}) - S(t\wedge t_i)|^p
	\]
	for example $M = \frac{\Gamma(p-m+1)||f||_{C^p}}{\Gamma(p+1)}$. We denote $C^n_\epsilon = \{[t_i,t_{i+1}]\in\pi_n: |S(t_i) - k|\leq \epsilon \text{ for some }k \in \Gamma_f^c\}$. Since $S\in V^p(\pi)$, there exists $N_\epsilon \in \mathbb{N}$ such that for all $n > N_\epsilon$, we have for all $[t_i,t_{i+1}]\in\pi_n$
	\[
	    |S(t_{i+1}) - S(t_i)| \leq \frac{\epsilon}{2}
	\] 
	Thus for $[t_i,t_{i+1}] \notin C^n_\epsilon$, we have 
	\[
	    (S(t_{i+1}) - k)(S(t_i) - k) > 0
	\]
	for all $k \in \Gamma_f^c$ and $S(t_i), S(t_{i+1})$ is away from $\Gamma_f^c$ with at least $\frac{\epsilon}{2}$ distance, which means we have the estimation
	\begin{equation}\label{continuity}
	    |C_{a_1^+}^p f(b_1) - C_{a_2^+}^p f(b_2)| \leq \omega_{\epsilon}(\sqrt{(a_1 - a_2)^2 + (b_1 - b_2)^2})
	\end{equation}
	for some modulus of continuity $\omega_{\epsilon}$, $a_1,a_2,b_1,b_2 \in [S(t_i),S(t_{i+1})]$ or $[S(t_{i+1}),S(t_i)]$ and $a_1 \leq b_1, a_2 \leq b_2$ by assumption $\ref{ass3}$ and $\ref{ass4}$. Furthermore, we denote $C_{1,\epsilon}^n = \{[t_i,t_{i+1}] \in \pi_n\slash C_\epsilon^n: S(t_i) \leq S(t_{i+1}), t_i \neq 0, t_{i+1}\neq T\}$ and $C_{2,\epsilon}^n = \{[t_i,t_{i+1}] \in \pi_n\slash C_\epsilon^n: S(t_{i+1}) < S(t_i), t_i \neq 0, t_{i+1}\neq T\}$. Then we can calculate separately 
	\begin{eqnarray*}
	     &&\left|\sum_{[t_i,t_{i+1}]\in C^n_\epsilon}\frac{1}{\Gamma(m)}\int_{S(t\wedge t_i)}^{S(t\wedge t_{i+1})}(f^{(m)}(r) - f^{(m)}(S(t\wedge t_i)))(S(t\wedge t_{i+1}) - r)^{m-1}dr\right| \\
	     &\leq& \sum_{[t_i,t_{i+1}]\in C^n_\epsilon}M|S(t\wedge t_{i+1}) - S(t\wedge t_i)|^p \leq \sum_{[t_i,t_{i+1}]\in \pi_n} Mg_\epsilon(S(t\wedge t_i))|S(t\wedge t_{i+1}) - S(t\wedge t_i)|^p	
	\end{eqnarray*}
	Here $g_\epsilon$ is a continuous taking value 1 on $[k-\epsilon,k+\epsilon]$ for each $k \in \Gamma_f^c$ and value 0 outside of $\cup_{k\in\Gamma_f^c}[k-2\epsilon,k+2\epsilon]$. Here we also take $\epsilon$ small enough such that $[k-2\epsilon,k+2\epsilon]$ are disjoint and $||g_\epsilon||_\infty \leq 1$. Then by definition of the $p-$th order variation, we see 
	\begin{eqnarray*}
	    &&\limsup_{n\rightarrow\infty} \left|\sum_{[t_i,t_{i+1}]\in C^n_\epsilon}\frac{1}{\Gamma(m)}\int_{S(t\wedge t_i)}^{S(t\wedge t_{i+1})}(f^{(m)}(r) - f^{(m)}(S(t\wedge t_i)))(S(t\wedge t_{i+1}) - r)^{m-1}dr\right| \\
	    &\leq& \int_0^tMg_\epsilon(S(r))d[S]^p(r)
	\end{eqnarray*}
	now let $\epsilon\rightarrow0$, we obtain that 
	\begin{eqnarray*}
	     &&\lim_{\epsilon\rightarrow0}\limsup_{n\rightarrow\infty} \left|\sum_{[t_i,t_{i+1}]\in C^n_\epsilon}\frac{1}{\Gamma(m)}\int_{S(t\wedge t_i)}^{S(t\wedge t_{i+1})}(f^{(m)}(r) - f^{(m)}(S(t\wedge t_i)))(S(t\wedge t_{i+1}) - r)^{m-1}dr\right| \\
	     &\leq& \sum_{k\in\Gamma_f^c} \int_0^tM\1_{\{S(r) = k\}}d[S]^p(r)
	\end{eqnarray*}
	By assumption $\ref{ass1}$ we then see that 
	\[
	    \lim_{\epsilon\rightarrow0}\limsup_{n\rightarrow\infty} \left|\sum_{[t_i,t_{i+1}]\in C^n_\epsilon}\frac{1}{\Gamma(m)}\int_{S(t\wedge t_i)}^{S(t\wedge t_{i+1})}(f^{(m)}(r) - f^{(m)}(S(t\wedge t_i)))(S(t\wedge t_{i+1}) - r)^{m-1}dr\right| = 0
	\]
	Next for the set $C^n_{1,\epsilon}$ and $\alpha = p - m$, we have 
    \begin{eqnarray*}
	    &&\sum_{[t_i,t_{i+1}]\in C^n_{1,\epsilon}} \frac{1}{\Gamma(m)}\int_{S(t\wedge t_i)}^{S(t\wedge t_{i+1})} (S(t\wedge t_{i+1}) - r)^{m-1}\frac{1}{\Gamma(\alpha)}\int_{S_{t\wedge t_i}}^r\frac{C_{S(t\wedge t_i)^+}^p f(s)}{(r-s)^{1-\alpha}} dsdr \\
	    &=& \sum_{[t_i,t_{i+1}]\in C^n_{1,\epsilon}}\frac{1}{\Gamma(m)\Gamma(\alpha)}\int_{S(t\wedge t_i)}^{S(t\wedge t_{i+1})}\int_{s}^{S_{t\wedge t_{i+1}}}(S(t\wedge t_{i+1}) - r)^{m-1}\frac{C_{S(t\wedge t_i)^+}^p f(s)}{(r-s)^{1-\alpha}}drds \\
	    &=& \sum_{[t_i,t_{i+1}]\in C^n_{1,\epsilon}}\frac{1}{\Gamma(p)}\int_{S(t\wedge t_i)}^{S(t\wedge t_{i+1})}(S(t\wedge t_{i+1}) - s)^{p-1}C_{S(t\wedge t_i)^+}^p f(s)ds
	\end{eqnarray*}
	Then by inequality ($\ref{continuity}$) and the fact that $f^{(p+)}(S(t_i)) = C_{S(t_i)^+}^p f(S(t_i)) = 0$(this is because $S(t_i) \in \Gamma_f$ and the lemma \ref{lemGamma_f}), we have 
	\begin{eqnarray*}
	    &&\left|\int_{S(t\wedge t_i)}^{S(t\wedge t_{i+1})}(S(t\wedge t_{i+1}) - s)^{p-1}C_{S(t\wedge t_i)^+}^p f(s)ds\right| \\
	    &\leq& \int_{S(t\wedge t_i)}^{S(t\wedge t_{i+1})}(S(t\wedge t_{i+1}) - s)^{p-1}\omega_\epsilon(s-S(t\wedge t_i))ds \\
	    &\leq& \frac{1}{p}\omega_\epsilon(S(t_{i+1}) - S(t_i))|S(t_{i+1}) - S(t_i)|^p
	\end{eqnarray*}
	Thus we obtain
	\begin{eqnarray*}
	    &&\left|\sum_{[t_i,t_{i+1}]\in C^n_{1,\epsilon}} \frac{1}{\Gamma(m)}\int_{S(t\wedge t_i)}^{S(t\wedge t_{i+1})} \frac{(S(t\wedge t_{i+1}) - r)^{m-1}}{\Gamma(\alpha)}\int_{S_{t\wedge t_i}}^r\frac{C_{S(t\wedge t_i)^+}^p f(s)}{(r-s)^{1-\alpha}} dsdr\right| \\
	    &\leq& \sum_{[t_i,t_{i+1}]\in \pi_n}\frac{\omega_\epsilon(S(t_{i+1}) - S(t_i))}{\Gamma(p+1)}|S(t_{i+1}) - S(t_i)|^p
	\end{eqnarray*}
	Since $osc(S,\pi_n)\rightarrow 0$ as $n\rightarrow\infty$, we have 
	\[
	    \lim_{n\rightarrow\infty} \sum_{[t_i,t_{i+1}]\in C^n_{1,\epsilon}} \frac{1}{\Gamma(m)}\int_{S(t\wedge t_i)}^{S(t\wedge t_{i+1})} (S(t\wedge t_{i+1}) - r)^{m-1}\frac{1}{\Gamma(\alpha)}\int_{S(t\wedge t_i)}^r\frac{C_{S(t\wedge t_i)^+}^p f(s)}{(r-s)^{1-\alpha}} dsdr = 0
	\]
	Now for the set $C_{2,\epsilon}^n$, we have
	\begin{eqnarray*}
	    &&\sum_{[t_i,t_{i+1}]\in C^n_{2,\epsilon}}\frac{1}{\Gamma(m)}\int_{S(t\wedge t_{i+1})}^{S(t\wedge t_i)}(f^{(m)}(S(t\wedge t_i)) - f^{(m)}(r))(S(t\wedge t_{i+1}) - r)^{m-1} dr \\
	    &=& \sum_{[t_i,t_{i+1}]\in C_{2,\epsilon}}\frac{1}{\Gamma(m)}\int_{SS(t\wedge t_{i+1})}^{S(t\wedge t_i)}\frac{1}{\Gamma(\alpha)}\int_r^{S(t\wedge t_i)}\frac{C_{r^+}^p f(s)}{(S(t\wedge t_i) - s)^{1-\alpha}} ds(S(t\wedge t_{i+1}) - r)^{n-1} dr \\
	    &=& \sum_{[t_i,t_{i+1}]\in C_{2,\epsilon}}\frac{1}{\Gamma(m)\Gamma(\alpha)}\int_{S(t\wedge t_{i+1})}^{S(t\wedge t_i)}\int_{S(t\wedge t_{i+1})}^s(S(t\wedge t_{i+1}) -r)^{m-1}\frac{C_{r^+}^p f(s)}{(S(t\wedge t_i) - s)^{1-\alpha}}drds
	\end{eqnarray*}
	Again by the inequality ($\ref{continuity}$), we will have 
	\[
	    |C_{r+}^p f(s)| \leq \omega_\epsilon(\sqrt{(r - S(t\wedge t_{i+1}))^2 + (s - S(t\wedge t_{i+1}))^2}) \leq \omega_\epsilon(\sqrt{2}|S(t\wedge t_{i+1}) - S(t\wedge t_i)|)
	\]
	Hence, we can obtain the estimation:
	\begin{eqnarray*}
	    &&\left|\int_{S(t\wedge t_{i+1})}^{S_{t\wedge t_i}}\int_{S(t\wedge t_{i+1})}^s(S(t\wedge t_{i+1}) -r)^{m-1}\frac{C_{r^+}^p f(s)}{(S(t\wedge t_i) - s)^{1-\alpha}}drds\right|\\
	    &\leq& \int_{S(t\wedge t_{i+1})}^{S_{t\wedge t_i}}\int_{S(t\wedge t_{i+1})}^s(S(t\wedge t_{i+1}) -r)^{m-1}\frac{\omega_\epsilon(\sqrt{2}|S(t\wedge t_{i+1}) - S(t\wedge t_i)|)}{(S(t\wedge t_i) - s)^{1-\alpha}}drds \\
	    &\leq& \frac{1}{m\alpha}\omega_\epsilon(\sqrt{2}|S(t\wedge t_{i+1}) - S(t\wedge t_i)|)|S(t_{i+1}) - S(t_i)|^p
	\end{eqnarray*}
	Thus we could obtain that 
	\[
	    \lim_{n\rightarrow\infty}\sum_{[t_i,t_{i+1}]\in C^n_{2,\epsilon}}\frac{1}{\Gamma(m)}\int_{S(t\wedge t_{i+1})}^{S(t\wedge t_i)}(f^{(m)}(r) - f^{(m)}(S(t\wedge t_i)) )(S(t\wedge t_{i+1}) - r)^{m-1} dr = 0
	\]
	And finally for the case $t_i = 0$ or $t_{i+1} = T$, the continuity of $f^{(m)}$ and $S$ will show that 
	\[
	    \lim_{n\rightarrow\infty} \int_{S(t\wedge t_i)}^{S(t\wedge t_{i+1})}(f^{(m)}(r) - f^{(m)}(S(t\wedge t_i)))(S(t\wedge t_{i+1}) - r)^{m-1}dr = 0
	\]
	Thus, by summing all these together and let $\epsilon$ tends to 0, we have 
	\[
	    \lim_{\epsilon\rightarrow 0}\lim_{n\rightarrow\infty}\sum_{[t_i,t_{i+1}]\in \pi_n}\frac{1}{\Gamma(m)}\int_{S(t\wedge t_{i+1})}^{S(t\wedge t_i)}(f^{(m)}(S(t\wedge t_i)) - f^{(m)}(r))(S(t\wedge t_{i+1}) - r)^{m-1} dr = 0
	\]
	Hence we see $\lim_{n\rightarrow\infty} L^n$ exists and we denote it as $\int_0^t (f'\circ S).dS$. And this leads to the Ito formula 
	\[
	    f(S(t)) - f(S(0)) = \int_0^t (f'\circ S).dS
	\]
    \end{proof}
    \begin{remark}\label{formito}{\em 
        We can not expect the It\^o formula in the fractional case to have the same form as in the integer case, i.e. there might be no It\^o term in the fractional case even if  Assumption $\ref{ass1}$ does not hold. For the example in  Remark \ref{counterass1}, we can show that for  $2 < p <3$ and $f(x) = |x|^p$, the It\^o term still vanishes even though we have 
        \[
            \int_{0}^1 f^{(p+)}(S(t)) d[S]^p(t) = \frac{1}{\Gamma(p+1)}\int_0^1 \1_{\{S(t) = 0\}} dc(t) = \frac{c(1) - c(0)}{\Gamma(p+1)} = \frac{1}{\Gamma(p+1)},
        \]
        since $\{S(t) = 0\} = C$ is the support of the function $c$. In this case, we will show that 
        \[
            0 = f(S(1)) - f(S(0)) = \int_0^1 (f'\circ S).dS.
        \]
         In fact, we will need to calculate 
        \[
            \lim_{n\rightarrow\infty} \sum_{[t_i,t_{i+1}]\in\pi_n}f'(S(t_i))(S(t_{i+1}) - S(t_i)) + \frac{1}{2}f''(S(t_i))(S(t_{i+1}) - S(t_i))^2
        \]
        which is in fact the 'rough integral' of $f'$ along the reduced order-p Ito rough path associated with $S$ \cite{cont2018pathwise}.
        Splitting the terms across each $I_j^i$ we have
        \begin{eqnarray*}
            &&\sum_{k=0}^{k_n - 1} p(S(t_{j,n}^{i,k}))^{p-1}\frac{1}{k_n}2^{-\frac{i}{p}} - \sum_{k=k_n}^{2k_n - 1} p(S(t_{j,n}^{i,k}))^{p-1}\frac{1}{k_n}2^{-\frac{i}{p}} \\
            &=&  -p(S(t_{j,n}^{i,k_n}))^{p-1}\frac{1}{k_n}2^{-\frac{i}{p}} = -p\cdot (2^{-\frac{i}{p}})^{p-1} \cdot \frac{2^{-\frac{i}{p}}}{k_n}
        \end{eqnarray*}
        Thus 
        \[
            \sum_{[t_i,t_{i+1}]\in\pi_n}f'(S(t_i))(S(t_{i+1}) - S(t_i)) = \sum_{i=1}^n\sum_{j=1}^{2^{i-1}} -p\cdot \frac{2^{-i}}{k_n} = -\frac{np}{2k_n}
        \]
        Now we consider the term involving the second derivative. On $I_j^i$, we have 
        \begin{eqnarray*}
            &&\sum_{[t_{j,n}^{i,k},t_{j,n}^{i,k+1}]\in\pi^i_{j,n}} \frac{1}{2}f''(S(t_{j,n}^{i,k}))(S(t_{j,n}^{i,k+1}) - S(t_{j,n}^{i,k}))^2 \\
            &=& \sum_{[t_{j,n}^{i,k},t_{j,n}^{i,k+1}]\in\pi_{j,n}^i} \frac{1}{2}p(p-1)(S(t_{j,n}^{i,k}))^{p-2} \frac{2^{-\frac{2i}{p}}}{k_n^2} \\
            &=& \frac{1}{2}\left(\sum_{k=0}^{k_n-1} p(p-1)k^{p-2}\frac{2^{-i}}{k_n^p} + \sum_{k=1}^{k_n} p(p-1)k^{p-2}\frac{2^{-i}}{k_n^p}\right)
        \end{eqnarray*}
        Summing the second order terms over $\pi_n$ we obtain: 
        \[
            \frac{np(p-1)}{4} \frac{2\sum_{k=1}^{k_n} k^{p-2} - k_n^{p-2}}{k_n^p}
        \]
        Adding together the first order term $-\frac{np}{2k_n}$, we need to calculate
        \[
            \lim_{n\rightarrow\infty}\frac{np}{4} \left((p-1)\frac{2\sum_{k=1}^{k_n} k^{p-2} - k_n^{p-2}}{k_n^p} - \frac{2}{k_n}\right)
        \]
        Since it is obvious to see that 
        \[
            \lim_{n\rightarrow\infty} \frac{np}{4}(p-1)\frac{1}{k_n^2} = \lim_{n\rightarrow\infty} \frac{np}{4}(p-1)\frac{1}{n^{\frac{2}{p-1}}}
        \]
        and $\frac{2}{p-1} > 1$ means that 
        \[
            \lim_{n\rightarrow\infty} \frac{np}{4}(p-1)\frac{1}{k_n^2} = 0
        \]
        Hence it remains to consider 
        \[
            \lim_{n\rightarrow\infty}\frac{np}{2} \left((p-1)\frac{\sum_{k=1}^{k_n} k^{p-2}}{k_n^p} - \frac{1}{k_n}\right)        
        \]
        Use the inequality
        \[
            \frac{k_n^{p-1}}{p-1} = \int_{0}^{k_n} x^{p-2}dx \leq \sum_{k=1}^{k_n} k^{p-2} \leq \int_1^{k_n+1} x^{p-2}dx = \frac{(k_n+1)^{p-1} - 1}{p-1}
        \]
        We then obtain first that 
        \[
            \lim_{n\rightarrow\infty}\frac{np}{2} \left((p-1)\frac{\sum_{k=1}^{k_n} k^{p-2}}{k_n^p} - \frac{1}{k_n}\right) \geq  \lim_{n\rightarrow\infty}\frac{np}{2} \left((p-1)\frac{k_n^{p-1}}{(p-1)k_n^{p}} - \frac{1}{k_n}\right) = 0
        \]
        and secondly 
        \begin{equation}\label{ineqexample}
            \lim_{n\rightarrow\infty}\frac{np}{2} \left((p-1)\frac{\sum_{k=1}^{k_n} k^{p-2}}{k_n^p} - \frac{1}{k_n}\right)  \leq   \lim_{n\rightarrow\infty}\frac{np}{2} \left(\frac{(k_n+1)^{p-1} - 1 - k_n^{p-1}}{k_n^p}\right)        
        \end{equation}
        Notice that 
        \[
            \lim_{n\rightarrow\infty} nk_n^{1-p} =1
        \]
        and $k_n\rightarrow\infty$, we see the right hand side of the inequality $(\ref{ineqexample})$ equals to 
        \[
            \lim_{k_n\rightarrow\infty}p\frac{k_n^{p-1}}{2} \left(\frac{(k_n+1)^{p-1} - 1 - k_n^{p-1}}{k_n^p}\right) = \frac{p}{2}\lim_{x\rightarrow\infty} \frac{(x+1)^{p-1} - 1 - x^{p-1}}{x}
        \]
        By l'Hopital's rule, above limit equals to 
        \begin{eqnarray*}
            &&\lim_{x\rightarrow\infty} \frac{p(p-1)}{2}((x+1)^{p-2} - x^{p-2}) \\
            &=&\frac{p(p-1)}{2} \lim_{x\rightarrow\infty} x^{p-2} \left(\left(1+\frac{1}{x}\right)^{p-2} - 1\right) \\
            &=& \frac{p(p-1)}{2} \lim_{x\rightarrow\infty} x^{p-2} \frac{p-2}{x} = 0
        \end{eqnarray*}
        since $p-2 < 1$. Thus in this case we see the remainder term is  zero, which is different from the integer case. Hence we see that \textbf{the pathwise change of variable formula  in Theorem $\ref{itoformula1}$  may hold even if  Assumption $\ref{ass1}$ does not hold.}
        }
    \end{remark}
    We now give an example of  path with the same $p-$th variation as above but leading to a non-zero remainder term in the change of variable formula.
    \begin{example}\label{nonzero}
        Define the intervals $I^i_j$ as in \eqref{eq.Iji}. Then we define $S|_{I^i_j}$ by induction on $i$. 
        Let $g(t) = 2min\{t,1-t\}$ on $[0,1]$ and $0$ otherwise. For  $a <b$, we define   $g(t,[a,b]) = g\left(\frac{t-a}{b-a}\right)$.
        
        First for $i = 1$, we divide the interval $I^i_j$ into $r_i$ smaller intervals $I^i_{j,k}, k = 1,\cdots,r_i$ such that $|I^i_{j,k}| = \frac{|I^i_{j}|}{r_i}$ and two of each are non-intersecting. On each interval $I^i_{j,k}$, we define $S(t) = 2^{-i}g(t,I^i_{j,k})$ for $k = 1,\cdots, r_i, j = 1,\cdots, 2^{i-1}$. In other words,   $S$ is defined as the limit
        \[
            S(t) = \sum_{i=1}^\infty\sum_{j=1}^{2^{i-1}} \sum_{k=1}^{r_i} 2^{-i}g(t,I^i_{j,k})
        \]
        Let $\pi_n=(\tau^n_l)$ be the dyadic Lebesgue partition associated with $S$:
        $$ \tau^n_0=0,\qquad \tau^n_{l+1}=\inf\{ t>\tau^n_l,\ |S(t)-S(\tau^n_l)|> 2^{-n}\}$$
        \[
            [S]^p(1) =\lim_{n\rightarrow\infty} \sum_{\pi_n}|S(\tau^n_{l+1})-S(\tau^n_l)|^p = \lim_{n\rightarrow\infty} \sum_{i=1}^n\sum_{j=1}^{2^{i-1}} \sum_{k=1}^{r_i} 2\times2^{-np}\times 2^{n-i} = \lim_{n\rightarrow\infty} \sum_{i=1}^n r_i 2^{-np+n}
        \]
      
        We can choose 
        \[
            r_i = \lfloor 2^{(i-1)(p-1)}(2^{p-1} - 1)\rfloor
        \]
        so that  
           \[
            1 = \lim_{n\rightarrow\infty} \sum_{i=1}^n r_i 2^{-np+n}.
        \]
        
        Then similar to what was discussed in the above remark, we have $[S]^p(t) = c(t)$, the Cantor function. Let $f(x) = |x|^p, 2 < p < \frac{3}{2}$ and $T = 1$, we are going to calculate the It\^o remainder term for $f(S(1)) - f(S(0))$. We calculate the limit 
        \begin{eqnarray*}
            &&\lim_{n\rightarrow\infty} \sum_{[t_i,t_{i+1}] \in \pi_n} p(S(t_i))^{p-1}(S(t_{i+1}) - S(t_i)) + \frac{p(p-1)}{2}(S(t_i))^{p-2}(S(t_{i+1}) - S(t_i))^2 \\
            &=& \lim_{n\rightarrow\infty} \sum_{i=1}^n\sum_{j=1}^{2^{i-1}} r_i\left(-p2^{-n}(2^{-i})^{p-1} + \frac{p(p-1)}{2}2^{-2n}c_n^i\right)
        \end{eqnarray*}
        Here 
        \[
            c_n^i = 2\sum_{k=1}^{2^{n-i}-1} 2^{-n(p-2)}k^{p-2} + 2^{-n(p-2)}(2^{n-i})^{p-2}
        \]
        We have 
        \[
            c_n^i < 2^{1-n(p-2)}\int_0^{2^{n-i}}x^{p-2}dx +2^{-n(p-2)}(2^{n-i})^{p-2}
        \]
        and hence 
        \[
            \frac{p(p-1)}{2}2^{-2n}c_n^i < p2^{-n-ip+i} +\frac{p(p-1)}{2}2^{-np + (n-i)(p-2)}
        \]
        Hence 
        \begin{eqnarray*}
            &&\sum_{i=1}^n\sum_{j=1}^{2^{i-1}} r_i\left(-p2^{-n}(2^{-i})^{p-1} + \frac{p(p-1)}{2}2^{-2n}c_n^i\right) \\
            &<& \sum_{i=1}^{n-1}2^{i-1} r_i\left(\frac{p(p-1)}{2}2^{-2n-ip+2i}\right) + 2^{n-1}r_n\left(-p2^{-np} + \frac{p(p-1)}{2}2^{-np}\right)\\
            &<& \sum_{i=1}^{n-1}2^{-p}(2^{p-1} - 1)\left(\frac{p(p-1)}{2}2^{-2n+2i}\right) + 2^{-p}(2^{p-1} - 1)\left(-p + \frac{p(p-1)}{2}\right)  \\
            &=&p2^{-p}(2^{p-1} - 1)\left(\frac{p-1}{2}\frac{1 - (\frac{1}{4})^{n-1}}{3} + -1 + \frac{p-1}{2}\right) 
        \end{eqnarray*}
        Now we let $n\rightarrow\infty$, we have 
        \[
            \lim_{n\rightarrow\infty} \sum_{i=1}^n\sum_{j=1}^{2^{i-1}} r_i\left(-p2^{-n}(2^{-i})^{p-1} + \frac{p(p-1)}{2}2^{-2n}c_n^i\right) \leq p2^{-p}(2^{p-1} - 1)\left(\frac{2(p-1)}{3} - 1\right)
        \]
        So if $2 < p < \frac{5}{2}$, then 
        \[
            \int_0^1 f'(S(t))dS(t) < 0
        \]
        but $f(S(1)) - f(S(0)) = 0$, which means there will be a non-zero remainder. And in this case, we also have 
        \[
            \int_0^1 \1_{\{S(t) = 0\}}d[S]^p(t) = 1
        \]
        so Assumption $\ref{ass1}$ is not satisfied. 
    \end{example}
    In fact, we can provide a formula for the It\^o remainder term for this path and function $f = |x|^p$. Take $T = 1$, $m = \lfloor p\rfloor$ and $\alpha = p - m$. Let 
    \[
        G_f^p(a,b) = \frac{1}{(m-1)!|b-a|^p}\int_{a}^b (f^{(m)}(x) - f^{(m)}(a))(b-x)^{m-1}dx
    \]
    for $a \neq b$ and take the limit value when exists for $a = b$. For the function $f(x) = |x|^p$, we can see $G_f^p$ is defined on $\mathbb{R}^2/\{(0,0)\}$ and for $k > 0$, 
    \[
        G_f^p(ka,kb) = G_f^p(a,b)
    \]
    which indicate us to consider $G_f^p$ as a function on the unit circle $S^1$. We define the projection map $P: \mathbb{R}^2/\{(0,0)\}\rightarrow S^1$ by $p(x) = \frac{x}{||x||}$ and define a sequences of measures by 
    \[
        \tilde{\nu}_n:= \frac{1}{N_n}\sum_{i=0}^{N_n - 1} \delta_{P(S(t_i),S(t_{i+1}))}
    \]
    where $N_n$ is the number of intervals in the partition $\pi_n$. Furthermore, we define 
    \[
        \hat{G}_f^p(\theta) = G_f^p(\cos(\theta),\sin(\theta))
    \]
    for $\theta \in [0,2\pi)$. Since the distance between two successive points in partition $\pi_n$ is $2^{-n}$, the remainder term can be rewritten as
    \[
        N_n\cdot2^{-np}\int \hat{G}_f^p(x)\tilde{\nu}_n(dx)
    \]
    and we have $N_n = \sum_{i=1}^n\sum_{j=1}^{2^{i-1}}\sum_{k=1}^{r_i}2^{n-i+1} = 2^n\sum_{i=1}^n r_i$. Hence the remainder term is 
    \[
        2^{n-np}\sum_{i=1}^n r_i\int \hat{G}_f^p(x)\tilde{\nu}_n(dx)
    \]
    By definition, we have $\lim_{n\rightarrow\infty}2^{n-np}\sum_{i=1}^n r_i = 1$. And if we have the weak convergence of $\tilde{\nu}_n$, then we can obtain the limit expression. Now let's calculate the limit of the measures $\tilde{\nu}_n$. 
    
    Since $S(t_i) = k2^{-n}$ for some positive number $k$ and $t_i \in \pi_n$, the result $\tan\left(P(S(t_i),S(t_{i+1}))\right)$ will equal to either $\frac{k}{k+1}$ or $\frac{k}{k-1}$, which means the final measure $\tilde{\nu}$ will be supported on these points. And 
    \begin{eqnarray*}
        \tilde{\nu}\left(\arctan\left(\frac{k}{k+1}\right)\right) &=& \lim_{n\rightarrow\infty} \tilde{\nu}_n\left(\arctan\left(\frac{k}{k+1}\right)\right) \\
        &=&  \frac{1}{2^{\lceil \log_2(k+1)\rceil p}}\frac{2^{p-1}-1}{2^p-1}
    \end{eqnarray*}
    By symmetry, we have 
    \[
        \tilde{\nu}\left(\arctan\left(\frac{k+1}{k}\right)\right) = \tilde{\nu}\left(\arctan\left(\frac{k}{k+1}\right)\right) = \frac{1}{2^{\lceil \log_2(k+1)\rceil p}}\frac{2^{p-1}-1}{2^p-1}
    \]
    Hence we have the It\^o formula for the path $S$ defined in the example $\ref{nonzero}$ and $f(x) = |x|^p$:
    \begin{equation}\label{nonzeroFormula}
        f(S(1)) - f(S(0)) = \int_{0}^1 f'(S(u))dS(u) + \int_{S^1} \hat{G}_f^p(x)\tilde{\nu}(dx)
    \end{equation}
    
    \textbf{The above calculations show that the remainder term may be non-standard when assumption $\ref{ass1}$ is not satisfied.}

    Furthermore, through the calculation of exact expression of It\^o's remainder term for the path in example $\ref{nonzero}$, we can give a general method to calculate It\^o's remainder term for paths satisfying some assumptions. 

    For a function $f$ and a positive number $p = m + \alpha$, where $m = \lfloor p \rfloor$, we define the map
    \begin{eqnarray}
          G_f^p(a,b) = \frac{1}{(m-1)!|b-a|^p}\int_{a}^b (f^{(m)}(x) - f^{(m)}(a))(b-x)^{m-1}dx \label{eq.Gf}
    \end{eqnarray}
    Then $G_f^p$ defines a map from $\{(x,y)\in\mathbb{R}^2,\ x\neq y\}$. If $f^{(m)}$ is continuous, it is easy to see $G_f^p$ is a continuous function on $\{(x,y)\in\mathbb{R}^2,\ x\neq y\}$. 
    
    Denote by $N_n^t$ the number of intervals in $\pi_n$ on $[0,t]$.
    
    In order to compute the remainder term in the fractional Ito formula, we 'stratify' the increments across the partition by the values of $G_f^p$, to build an auxiliary quotient space $X$ which is required to have certain properties:
    \begin{assumption}[Auxiliary quotient space]\label{QuotientSpaceDef}
        There exists a space $X$ and a map $$ P^p_f: \{(x,y)\in\mathbb{R}^2,\ x\neq y\} \rightarrow X$$ such that 
        \begin{enumerate}
            \item[(i)] For all $x\in X$, the map $G^p_f$ defined by \eqref{eq.Gf} is constant on $(P^p_f)^{-1}(x)$;
            \item[(ii)] The sequence of measures on $[0,T]\times X$ defined by 
            \[
                \nu_n(dt,dx) :=\sum_{[t_i,t_{i+1}]\in\pi_n} |S(t_{i+1}) - S(t_i)|^p \delta_{t_i}(t)\delta_{P^p_f(S(t_i),S(t_{i+1}))}(x)
            \]
        converges weakly to a measure $\nu$ on $[0,T]\times X$. 
        \end{enumerate}   
        
    \end{assumption}
    \begin{remark}
        The measure $\tilde{\nu}$ in the example $\ref{nonzero}$ is actually the measure $\nu([0,1],dx)$ defined in assumption \ref{QuotientSpaceDef} on the space $X$. In fact, due to the fact that partitions in the example $\ref{nonzero}$ are of Lebesgue type, we have 
        \[
            \tilde{\nu}_n(dx) = \frac{1}{[S]^{p}_n(1)}\nu_n([0,1],dx)
        \]
        Here $[S]^{p}_n(1) = \sum_{[t_i,t_{i+1}]\in\pi_n, t_i \leq 1} |S(t_{i+1}) - S(t_i)|^p$. Let $n\rightarrow\infty$, we then obtain
        \[
            \tilde{\nu}(dx) = \frac{1}{[S]^p(1)}\nu([0,1],dx) = \nu([0,1],dx)
        \]
        Hence assumption \ref{QuotientSpaceDef} is a reasonable condition in order to calculate Ito's remainder term such as example $\ref{nonzero}$.
        
        Furthermore, since the first marginal measure of $\nu_n$ is
        \[
            \nu_n(dt,X) = \sum_{[t_i,t_{i+1}]\in\pi_n} |S(t_{i+1}) - S(t_i)|^p \delta_{t_i}(t)
        \]
        we see $S \in V^p(\pi_n)$ is equivalent to the weak convergence of $\nu_n(dt,X)$ on the interval $[0,T]$. In this sense, we see assumption \ref{QuotientSpaceDef} actually requires other properties in addition to the existence of $p-$th variation along the sequence $\{\pi_n\}$.
    \end{remark}
    
    The choice of  $X$ is crucial in making the calculation of the remainder term tractable.
       One can always choose $X=\mathbb{R}$ but this choice may not lead to easy calculations (see below).  
    \begin{theorem}\label{EquiDistantFormula}
        Let $\{\pi_n\}$ be a sequence of partitions and $S \in V_p(\pi)$ such that there exists $(X,P^p_f)$ satisfying Assumption \ref{QuotientSpaceDef}. We define the map $\hat{G}_f^p: X\rightarrow\mathbb{R}$ such that $\hat{G}_f^p\circ P_f^p = G_f^p$. Then
        \[
            f(S(t)) - f(S(0)) = \int_0^t f'(S(u))dS(u) + \int_0^t\int_X \hat{G}_f^p(x)\nu(ds,dx)
        \]
        where 
    \end{theorem}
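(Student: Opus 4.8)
The plan is to recognize the integral remainder that already appears in the Taylor decomposition used in the proof of Theorem \ref{itoformula1} as \emph{exactly} an integral of $\hat{G}_f^p$ against the measures $\nu_n$, and then to pass to the limit using the weak convergence postulated in Assumption \ref{QuotientSpaceDef}. First I would write down the exact, finite-$n$ identity: applying Taylor's formula with integral remainder to each increment, precisely as at the beginning of the proof of Theorem \ref{itoformula1}, gives for every $n$
\[
f(S(t)) - f(S(0)) = L^n + R^n,
\]
where $L^n = \sum_{[t_i,t_{i+1}]\in\pi_n}\sum_{j=1}^m \frac{f^{(j)}(S(t\wedge t_i))}{j!}(S(t\wedge t_{i+1}) - S(t\wedge t_i))^j$ is the order-$m$ compensated Riemann sum and $R^n$ is the sum over $\pi_n$ of the terms $\frac{1}{(m-1)!}\int_{S(t\wedge t_i)}^{S(t\wedge t_{i+1})}(f^{(m)}(r) - f^{(m)}(S(t\wedge t_i)))(S(t\wedge t_{i+1}) - r)^{m-1}\,dr$. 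The decisive observation is that, since $\Gamma(m)=(m-1)!$, each summand of $R^n$ equals exactly $G_f^p(S(t\wedge t_i),S(t\wedge t_{i+1}))\,|S(t\wedge t_{i+1}) - S(t\wedge t_i)|^p$ by the definition \eqref{eq.Gf} of $G_f^p$.

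Next I would rewrite $R^n$ as an integral against $\nu_n$. Using property (i) of Assumption \ref{QuotientSpaceDef}, the map $G_f^p$ factors as $\hat{G}_f^p\circ P_f^p$, so that, up to the single interval straddling $t$, one has $R^n = \int_{[0,t]\times X}\hat{G}_f^p(x)\,\nu_n(ds,dx)$. The straddling interval contributes a term bounded by $\|G_f^p\|_\infty\,\op{osc}(S,\pi_n)^p$, which tends to $0$; here boundedness of $G_f^p$ follows from the $(p-m)$-Hölder regularity of $f^{(m)}$ together with the Beta-integral identity $\int_a^b |x-a|^{p-m}(b-x)^{m-1}\,dx = B(p-m+1,m)|b-a|^p$.

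I would then invoke the weak convergence $\nu_n\to\nu$ of property (ii). The relevant test function $\mathbf 1_{[0,t]}(s)\,\hat{G}_f^p(x)$ is bounded and continuous in $x$ but discontinuous in $s$ at $s=t$; since the time-marginal of $\nu$ coincides with $[S]^p$, which is atomless by Definition \ref{def:p-var}, the slice $\{t\}\times X$ is $\nu$-null, and a portmanteau argument yields $\int_{[0,t]\times X}\hat{G}_f^p\,d\nu_n \to \int_{[0,t]\times X}\hat{G}_f^p\,d\nu$. Combining with the vanishing of the straddling term gives $R^n \to \int_0^t\int_X \hat{G}_f^p(x)\,\nu(ds,dx)$. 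Finally, since $L^n + R^n = f(S(t)) - f(S(0))$ is independent of $n$, the convergence of $R^n$ forces convergence of $L^n$, whose limit is by definition the pathwise integral $\int_0^t f'(S(u))\,dS(u)$; rearranging yields the stated formula. It is worth emphasizing that, unlike in Theorem \ref{itoformula1}, the existence of the pathwise integral is here not assumed a priori but is a byproduct of the convergence of the remainder.

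The main obstacle is the passage to the limit in the third paragraph: justifying the interchange of limit and integral requires $\hat{G}_f^p$ to be bounded and ($\nu$-a.e.) continuous on $X$, whereas Assumption \ref{QuotientSpaceDef} only provides weak convergence of $\nu_n$. Boundedness is secured by the uniform estimate on $G_f^p$ coming from the Hölder regularity of $f^{(m)}$, and the time-discontinuity of the indicator is absorbed by the atomlessness of $[S]^p$; the continuity of $\hat{G}_f^p$ on the quotient $X$ is the genuine structural requirement on the auxiliary space, satisfied for instance by the construction $X=S^1$ in Example \ref{nonzero}, where $\hat{G}_f^p$ is continuous because $G_f^p$ is scale-invariant and $f^{(m)}$ is continuous away from the origin.
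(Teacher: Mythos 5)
Your proposal follows essentially the same route as the paper's proof: the same Taylor decomposition $f(S(t))-f(S(0)) = L^n + R^n$, the same identification of each remainder summand with $G_f^p(S(t_i),S(t_{i+1}))\,|S(t_{i+1})-S(t_i)|^p$ so that $R^n = \int_0^t\int_X \hat G_f^p\, d\nu_n$, and the same passage to the limit via the weak convergence postulated in Assumption \ref{QuotientSpaceDef}. The only difference is that you spell out several justifications the paper's one-line limit argument leaves implicit --- the straddling interval at $t$, the boundedness and continuity of $\hat G_f^p$ on $X$, and the portmanteau/atomlessness argument absorbing the time-discontinuity of $\mathbf{1}_{[0,t]}$ --- which strengthens rather than alters the argument.
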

    \begin{proof}
        Using a  Taylor expansion, we have actually
        \begin{eqnarray*}
            f(S(t)) - f(S(0)) &=& \sum_{[t_i,t_{i+1}]\in\pi_n,t_i\leq t} \sum_{j=1}^m \frac{f^{(j)}(S(t\wedge t_i))}{j!}(S(t\wedge t_{i+1}) - S(t\wedge t_i))^j \\
            &+& \sum_{[t_i,t_{i+1}]\in\pi_n,t_i\leq t} G_f^p(S(t_i),S(t_{i+1}))|S(t_{i+1})-S(t_i)|^p
        \end{eqnarray*}
        by the definition of measures $\nu_n$, we have 
        \begin{eqnarray*}
            f(S(t)) - f(S(0)) &=& \sum_{[t_i,t_{i+1}]\in\pi_n,t_i\leq t} \sum_{j=1}^m \frac{f^{(j)}(S(t\wedge t_i))}{j!}(S(t\wedge t_{i+1}) - S(t\wedge t_i))^j \\
            &+&  \int_0^t\int_X \hat{G}_f^p(x)\tilde{\nu}_n(dsdx)
        \end{eqnarray*}
       Using the weak convergence of $\nu_n$ in the assumption $\ref{QuotientSpaceDef}$, so taking limits yields the desired result.
    \end{proof}
   
    \begin{remark}{\em
        We can perturb the function $f$ without changing the choice of the space $X$, by working in a neighborhood of the singularity of $G_f^p$.
      Take $S^1$ as an example. Suppose $f\sim |x|^p$ near zero, which means that $G_f^p$ has a unique singularity at $(0,0)$. If furthermore, on each ray emanating from the origin, the function $G_f^p$ converges as point tends to origin 'uniformly'. Here uniform means that for each $\epsilon > 0$, there exists $\delta >0$ such that on the band $\{(a,b):d((a,b),y=x) < \delta\}$, we have $|G_f^p(a,b) - G_f^p(c,d)|\leq \epsilon$ if $P_f^p(a,b) = P_f^p(c,d)$ and $(a,b),(c,d)$ are on the band. Above assumption describe the behaviour of $G_f^p$ around the singular point $(0,0)$ and can be used to calculate the It\^o remainder term. For such function $f$ and path $S$, we can derive the formula
        \[
            f(S(t)) - f(S(0)) = \int_0^t f'(S(u))dS(u) +  \int_{S^1} g(x)\nu([0,t],dx)
        \]
        Here $g(x)$ is defined as the radial limit at the origin of $G_f^p$.
        
        Furthermore if $f$ admits a bounded $p-th$ order Caputo derivative, then the uniform convergence can be checked by uniform convergence of the Caputo derivative $C_a^p f(b)$. }
    \end{remark}
    
    We can see from the theorem that when It\^o's remainder term is non-zero, the formula is not in the classical form. Hence we will need more information to describe the It\^o remainder term in the fractional case. 
    
    Let us go back to zero remainder term case. Consider the set $(E^p)'$ of all functions satisfying assumptions \ref{ass3} and \ref{ass4}. Then $(E^p)'$ is a subset of $C^p(\mathbb{R})$. Let $E^p$ be the closure of $(E^p)'$ in $C^p(\mathbb{R})$ equipped with the semi-norms
    \[
        \sup_{x\in K}\{f^{(k)}(x):k=0,\cdots,m\} + \sup_{x,y\in K}\left\{\frac{|f^{(m)}(y) - f^{(m)}(x)|}{|y-x|^\alpha}\right\}
    \]
    here $p = m + \alpha$ with $m < p < m+1$. Then we have the following theorem
    \begin{theorem}\label{itoformula2}
         Let $S\in V_p(\pi)$  satisfying Assumption $\ref{ass1}$ and $f\in E^p$. Then   for any $t \in [0,T]$
        \[
	        f(S(t)) = f(S(0)) + \int f'(S(u))dS(u)
	    \]
	    where the integral is a limit of compensated Riemann sums of order $m = \lfloor p \rfloor$:
	    \[
	        \int f'(S(u))dS(u) = \lim_{n\rightarrow\infty}\sum_{t_i\in\pi_n} \sum_{j=1}^m \frac{f^{(j)}(S(t_i))}{j!}(S(t_{j+1}) - S(t_j))^j.
	    \]
	 
    \end{theorem}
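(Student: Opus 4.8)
The plan is to deduce the formula for $f \in E^p$ from Theorem \ref{itoformula1} by a density argument, exploiting the fact that the exact Taylor decomposition underlying the proof of Theorem \ref{itoformula1} is valid for \emph{any} $f \in C^p(\R)$, not only for those satisfying Assumptions \ref{ass3} and \ref{ass4}. Write $m = \lfloor p \rfloor$, $\alpha = p - m$, and for $g \in C^p(\R)$ and a partition $\pi_n$ set
\[
    L^n(g,t) = \sum_{[t_i,t_{i+1}]\in\pi_n} \sum_{j=1}^m \frac{g^{(j)}(S(t\wedge t_i))}{j!}\big(S(t\wedge t_{i+1}) - S(t\wedge t_i)\big)^j
\]
for the compensated Riemann sum and
\[
    R^n(g,t) = \sum_{[t_i,t_{i+1}]\in\pi_n} \frac{1}{\Gamma(m)}\int_{S(t\wedge t_i)}^{S(t\wedge t_{i+1})}\big(g^{(m)}(r) - g^{(m)}(S(t\wedge t_i))\big)\big(S(t\wedge t_{i+1}) - r\big)^{m-1}\,dr
\]
for the remainder. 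Taylor's formula with integral remainder gives the exact identity $f(S(t)) - f(S(0)) = L^n(f,t) + R^n(f,t)$ for every $n$. It therefore suffices to show that $R^n(f,t) \to 0$ as $n \to \infty$; this yields both the existence of the integral as $\lim_n L^n(f,t)$ and the stated formula, with no need to approximate the left-hand side.

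The first step is a uniform-in-$n$ Lipschitz estimate for $R^n$ in terms of the seminorm defining $E^p$. Since $S \in C([0,T],\R)$, its range lies in a fixed compact $K$, and for any $g \in C^p(\R)$ one has the per-interval bound
\[
    \left|\frac{1}{\Gamma(m)}\int_a^b \big(g^{(m)}(r) - g^{(m)}(a)\big)(b-r)^{m-1}\,dr\right| \le \frac{\Gamma(\alpha+1)}{\Gamma(p+1)}\,[g^{(m)}]_{\alpha,K}\,|b-a|^p
\]
for $a,b \in K$, where $[\cdot]_{\alpha,K}$ is the $\alpha$-H\"older seminorm on $K$; this is the computation already performed for the constant $M$ in the proof of Theorem \ref{itoformula1}, via the Beta integral $\int_a^b |r-a|^\alpha|b-r|^{m-1}\,dr = |b-a|^p\,\Gamma(\alpha+1)\Gamma(m)/\Gamma(p+1)$. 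Summing over the partition and using that, since $S \in V_p(\pi)$, the quantity $\sum_{[t_i,t_{i+1}]\in\pi_n} |S(t\wedge t_{i+1}) - S(t\wedge t_i)|^p$ converges to $[S]^p(t)$ and is hence bounded by a constant $C$ uniformly in $n$, I obtain
\[
    \sup_n \big|R^n(f,t) - R^n(g,t)\big| \le \frac{\Gamma(\alpha+1)}{\Gamma(p+1)}\,C\,[f^{(m)} - g^{(m)}]_{\alpha,K},
\]
whose right-hand side is controlled by the $E^p$ seminorm of $f-g$.

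With this estimate the approximation is routine. Fix $\epsilon > 0$. Since $f \in E^p$ is a limit of elements of $(E^p)'$, choose $g \in (E^p)'$ with $\frac{\Gamma(\alpha+1)}{\Gamma(p+1)}\,C\,[f^{(m)} - g^{(m)}]_{\alpha,K} < \epsilon/2$, so that $|R^n(f,t) - R^n(g,t)| < \epsilon/2$ for all $n$. Because $g$ satisfies Assumptions \ref{ass3} and \ref{ass4} while $S$ satisfies Assumption \ref{ass1}, Theorem \ref{itoformula1} applies to $g$, and combined with the exact decomposition $g(S(t)) - g(S(0)) = L^n(g,t) + R^n(g,t)$ it forces $R^n(g,t) \to 0$ as $n \to \infty$. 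Hence $\limsup_{n\to\infty} |R^n(f,t)| \le \epsilon/2 < \epsilon$, and letting $\epsilon \downarrow 0$ gives $R^n(f,t) \to 0$. Consequently $L^n(f,t) \to f(S(t)) - f(S(0))$, which is exactly the claimed change of variable formula.

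I expect the main obstacle to be securing the uniform-in-$n$ control of the remainder difference by the $E^p$ seminorm. One must check that the seminorm chosen to define $E^p$ (the $\alpha$-H\"older seminorm of the $m$-th derivative on compacts, together with the lower-order supremum norms) is precisely the quantity appearing in the per-interval estimate, and that the $p$-variation sums are bounded uniformly in $n$ on $K$. Once these two points are in place, the interchange of the limits $n \to \infty$ and $g \to f$ goes through, since the Taylor decomposition is exact for each fixed $f$ and only the remainder sum is perturbed in the approximation.
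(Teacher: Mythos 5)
Your proposal is correct and is essentially the paper's own proof: both rest on the exact Taylor decomposition $f(S(t))-f(S(0))=L^n(f,t)+R^n(f,t)$, the per-interval H\"older/Beta-integral bound giving $\sup_n|R^n(f,t)-R^n(g,t)|\lesssim [f^{(m)}-g^{(m)}]_{\alpha,K}\,\sup_n\sum|S(t_{i+1})-S(t_i)|^p$, and an application of Theorem \ref{itoformula1} to an approximating $g\in (E^p)'$ to conclude $R^n(g,t)\to 0$, followed by the $\limsup$ and $\epsilon\downarrow 0$ argument. The only differences are cosmetic (your explicit constant $\Gamma(\alpha+1)/\Gamma(p+1)$ versus the paper's cruder $1/m!$ bound, and your slightly more explicit statement that convergence of the $p$-variation sums yields a uniform-in-$n$ bound).
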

    \begin{proof}
        Suppose $f_k\in (E^p)'$ and $f_k\rightarrow f$ in $C^p$. We have 
        \[
            f(S(t)) - f(S(0)) = L^n + \sum_{t_i\in\pi_n} \frac{1}{\Gamma(m)}\int_{S(t_i)}^{S(t_{i+1})}(f^{(m)}(r) - f^{(m)}(S(t_i)))(S(t_{i+1}) - r)^{m-1}dr
        \]
        Let $K$ be a compact set  containing the path of $S([0,T])$. Since $f_k\rightarrow f$ in $C^p$, we have for every $\epsilon > 0$, there exists $N > 0$ and for all $k > N$, 
        \[
            \sup_{x,y\in K}\frac{|(f^{(m)} - f_k^{(m)})(y) - (f^{(m)} - f_k^{(m)})(x)|}{|y-x|^\alpha} \leq \epsilon
        \]
        Hence 
        \begin{eqnarray*}
            &&\left|\sum_{[t_i,t_{i+1}]\in\pi_n} \frac{1}{\Gamma(m)}\int_{S(t_i)}^{S(t_{i+1})}((f^{(m)}-f_k^{(m)})(r) - (f^{(m)} - f_k^{(m)})(S(t_i)))(S(t_{i+1}) - r)^{m-1}dr\right| \\
            &\leq& \epsilon\sum_{[t_i,t_{i+1}]\in\pi_n} \frac{1}{\Gamma(m)}\int_{S(t_i)}^{S(t_{i+1})}|r-S(t_i)|^\alpha|S(t_{i+1}) - r|^{m-1}dr \\
            &\leq& \epsilon\sum_{[t_i,t_{i+1}]\in\pi_n} \frac{1}{m!}|S(t_{i+1}) - S(t_i)|^p
        \end{eqnarray*}
        Hence we have 
        \begin{eqnarray*}
            &&\limsup_{n\rightarrow\infty} \left|\sum_{[t_i,t_{i+1}]\in\pi_n} \frac{1}{\Gamma(m)}\int_{S(t_i)}^{S(t_{i+1})}(f^{(m)}(r) - f^{(m)}(S(t_i)))(S(t_{i+1}) - r)^{m-1}dr \right| \\
            &\leq& \limsup_{n\rightarrow\infty} \left|\sum_{[t_i,t_{i+1}]\in\pi_n} \frac{1}{\Gamma(m)}\int_{S(t_i)}^{S(t_{i+1})}(f_k^{(m)}(r) - f_k^{(m)}(S(t_i)))(S(t_{i+1}) - r)^{m-1}dr \right| + \frac{\epsilon}{m!}[S]^p(t)
        \end{eqnarray*}
        for all $k > N$. Since $f_k \in E$, we then have 
        \[
            \limsup_{n\rightarrow\infty} \left|\sum_{[t_i,t_{i+1}]\in\pi_n} \frac{1}{\Gamma(m)}\int_{S(t_i)}^{S(t_{i+1})}(f^{(m)}(r) - f^{(m)}(S(t_i)))(S(t_{i+1}) - r)^{m-1}dr \right|\leq  \frac{\epsilon}{m!}[S]^p(t)
        \]
        Finally let $\epsilon$ tends to $0$, we have 
        \[
            \lim_{n\rightarrow\infty} \left|\sum_{[t_i,t_{i+1}]\in\pi_n} \frac{1}{\Gamma(m)}\int_{S(t_i)}^{S(t_{i+1})}(f^{(m)}(r) - f^{(m)}(S(t_i)))(S(t_{i+1}) - r)^{m-1}dr \right| = 0
        \]
        Hence $L^n$ converges when $n$ tends to infinity and we have 
        \[
            f(S(t)) - f(S(0)) = \int_0^t f'(S(u))dS(u)
        \]
    \end{proof}
    \begin{remark}
        It can be seen in the proof of the theorem \ref{itoformula1} that we only need 
        \begin{equation}\label{pathpoint}
            \int_0^T\1_{\{S(t) = k\}}d[S]^p(t) = 0
        \end{equation}
        for $k \in \Gamma_f^c$. And we can denote $P_f$ as the set of paths which satisfies equation ($\ref{pathpoint}$) on $\Gamma_f^c$.
        
        On the other hand, if we fix the path $S$, then by Fubini's theorem, we actually have that
        \[
            \int_\mathbb{R}\int_0^T\1_{\{S(t)=k\}}d[S]^p(t)dk = 0
        \]
        which means equation ($\ref{pathpoint}$) is satisfied by almost surely all $k \in \mathbb{R}$. Hence we can consider the set $(E_S^p)'$ with all functions $f$ such that $\Gamma_f$ contains all points that break the equation ($\ref{pathpoint}$). And then we can obtain the closure $E_S^p$ for fixed path $S$. We can also ignore $p$ in the subscript because $p$ is included in the information of $S$. So we can just write $E_S$. And it is obvious that $(E_S)' \subset (E^p)'$ and $E_S \subset E^p$.
        
         Furthermore, by positivity of $\int_\mathbb{R}\int_0^T\1_{\{S(t)=k\}}d[S]_t^p dk$, we can see that the set of $k$ which violates equation ($\ref{pathpoint}$) is actually countable. Hence we could define set $(E_x^p)'$ to be all functions $f$ such that $\Gamma_f$ contains point $x$ and $E_x^p$ to be the completion under H\"older norm. If we furthermore define $R_S = \left\{k\in\mathbb{R}: \int_0^T\1_{\{S(t)=k\}}d[S]^p(t) \neq 0\right\}$, then we have actually $E_S = \cap_{x \in R_S} E_x^p$.
    \end{remark}
    Now let's give some examples of functions belonging to $E^p$.
    \begin{example}[Examples of functions belonging to $E^p$.]
    \noindent
    \begin{enumerate}
        \item All functions $f \in C^{m+1}(\mathbb{R})$.
        \item The function $f(x) = |x-k|^p$ for some $k \in \mathbb{R}$. It can be seen in the example \ref{xalpha} that $C_{a^+}^p f(x)$ is continuous on $\{(a,x) \in U\times U: a \leq x\}$ for any compact set $U$ contained in positive real line or negative real line, which means $\Gamma_f = \mathbb{R}\slash\{0\}$ and hence is a function in $E_p' \subset E_p$.
        \item The linear combinations of $|x-k|^p$. For example
        \[
            f(x) = \sum_{n=1}^\infty \frac{1}{n^2}|x-x_n|^p
        \]
        with $\{x_n\}$ an ordered set of rationals in $[0,1]$.
    \end{enumerate}
    \end{example}
    \begin{remark}
        Smooth ($C^\infty$) functions belong to $E_p$. Denote by  $C^{p+}(\mathbb{R})\subset E_p$ the completion of the smooth function under H\"{o}lder norm i.e. the set of functions $f \in C^p$ such that on every compact set $K$:
          \[
            \lim_{\delta\rightarrow0} \sup_{x,y\in K} \left\{\frac{|f^{(m)}(x) - f^{(m)}(y) |}{|x-y|^\alpha}: |x-y|\leq \delta\right\} = 0
        \]
        Then we can easily see that for any $q > p$, we will have $E_q \subset C^q \subset C^{p+} \subset E_S \subset E_p$ for any path $S$. Here $q$ is a non-integer.
    \end{remark}
    
    \begin{example}[Examples for $P_f$ with $f(x) = |x|^p$]
    \noindent
    \begin{enumerate}
        \item Fractional Brownian Motion: Let $p=1/H$. We have 
        \[
            \mathbb{E}\left[\int_0^T \1_{\{B^H(t) = 0\}}dt\right] = 0
        \]
        which means that almost surely $B^H \in P^p_f$.
    \end{enumerate}
    \end{example}
    \begin{remark}
        We give here another remark on Assumption $\ref{ass4}$. In fact, as we see in the remark $\ref{rem.ass4}$, Assumption $\ref{ass4}$ means uniform convergence of the equation 
        \[
            \lim_{y\rightarrow x} \frac{f^{(m)}(y) - f^{(m)}(x)}{|y-x|^\alpha} = 0
        \]
        on every compact set $K$. Hence, according to the proof of the theorem $\ref{itoformula1}$, we can change 3 assumptions to the following one
        \begin{assumption}\label{ass5}
            There exists a sequence of open set $U_i$ such that $\bar{U}_{i+1} \subset U_{i}$ and
            \[
                \lim_{y\rightarrow x} \frac{f^{(m)}(y) - f^{(m)}(x)}{|y-x|^\alpha} = 0
            \]
            locally uniformly outside of $U_i$. Furthermore, Let $C = \cap_{i}^\infty \bar{U}_{i}$, we have 
            \[
                \int_o^T \1_{\{S(t) \in C\}} d[S](t) = 0
            \]  
        \end{assumption}
        If we replace the 3 assumptions in the theorem $\ref{itoformula1}$ by the single Assumption $\ref{ass5}$, then Theorem $\ref{itoformula1}$ still holds. And in this case we do not need to calculate the Caputo derivative of the function $f$. The only thing that we need to check is the uniform H\"older continuity. This means we have 
    \end{remark}
    \begin{theorem}
        Suppose $S\in V_p(\pi)$ and $f$ satisfies assumption \ref{ass5} for $m = \lfloor p \rfloor$. Then  
        \[
	     \forall t\in [0,T],\qquad   f(S(t)) = f(S(0)) + \int_0^T (f'\circ S).dS
	    \]
	    where the  last term is a F\"ollmer integral, defined a limit of compensated Riemann sums of order $m = \lfloor p \rfloor$:
	    \[
	       \int_0^T (f'\circ S).dS = \lim_{n\rightarrow\infty}\sum_{t_i\in\pi_n} \sum_{j=1}^m \frac{f^{(j)}(S(t\wedge t_i))}{j!}(S(t\wedge t_{j+1}) - S(t\wedge t_j))^j.
	    \]
    \end{theorem}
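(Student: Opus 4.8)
The plan is to run the proof of Theorem~\ref{itoformula1} essentially unchanged, replacing the two places where it invoked the Caputo-derivative hypotheses (Assumptions~\ref{ass3}--\ref{ass4}) and the occupation hypothesis (Assumption~\ref{ass1}) by the two clauses of Assumption~\ref{ass5}. Starting from Taylor's formula with integral remainder, for fixed $t$ one writes
\[
f(S(t)) - f(S(0)) = L^n + R^n ,
\]
where $L^n=\sum_{[t_k,t_{k+1}]\in\pi_n}\sum_{j=1}^m \tfrac{f^{(j)}(S(t\wedge t_k))}{j!}\bigl(S(t\wedge t_{k+1})-S(t\wedge t_k)\bigr)^j$ is exactly the compensated Riemann sum of order $m$, and
\[
R^n=\sum_{[t_k,t_{k+1}]\in\pi_n}\frac{1}{\Gamma(m)}\int_{S(t\wedge t_k)}^{S(t\wedge t_{k+1})}\bigl(f^{(m)}(r)-f^{(m)}(S(t\wedge t_k))\bigr)\bigl(S(t\wedge t_{k+1})-r\bigr)^{m-1}\,dr .
\]
Everything reduces to proving $R^n\to 0$, since then $L^n=f(S(t))-f(S(0))-R^n$ converges and the \follmer\ integral exists and equals $f(S(t))-f(S(0))$.

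To control $R^n$ I would fix $i$ and split the partition intervals according to whether $S(t_k)\in U_i^c$ (``good'') or $S(t_k)\in U_i$ (``bad''). Because $\overline{U_{i+1}}\subset U_i$, the compact set $S([0,T])\cap U_i^c$ is disjoint from the closed set $\overline{U_{i+1}}$ and hence separated from it by a positive distance $\rho_i$; once $\op{osc}(S,\pi_n)<\rho_i$, every good increment therefore has its entire range inside $\overline{U_{i+1}}^c\subset U_{i+1}^c$, a region on which the little-$o$ Hölder convergence of Assumption~\ref{ass5} is uniform. Thus, given $\eta>0$, for $n$ large one has $|f^{(m)}(r)-f^{(m)}(S(t_k))|\le\eta\,|r-S(t_k)|^\alpha$ on each good interval, and the elementary identity $\int_a^b|r-a|^\alpha(b-r)^{m-1}\,dr=B(\alpha+1,m)\,|b-a|^p$ bounds the good part of $R^n$ by $\tfrac{\eta\,B(\alpha+1,m)}{\Gamma(m)}\,[S]^p(t)$, which tends to $0$ as $\eta\to0$.

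On the bad intervals I would retain the crude estimate already used in Theorem~\ref{itoformula1}: the integrand is dominated by $M\,|S(t\wedge t_{k+1})-S(t\wedge t_k)|^p$, with $M$ depending only on the local $\alpha$-Hölder norm of $f^{(m)}$, which is the sole place the standing $C^p$-regularity of $f$ enters. Dominating $\1_{U_i}$ above by a continuous cut-off $g_i$ with $g_i\equiv1$ on $\overline{U_i}$, $0\le g_i\le1$, and support in a slightly larger neighbourhood, the weak convergence of the measures $\mu^n$ of Definition~\ref{def:p-var} yields, after letting $\eta\to0$, the bound $\limsup_{n}|R^n|\le M\int_0^t g_i(S(r))\,d[S]^p(r)$ for each fixed $i$. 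The last step is to send $i\to\infty$: choosing the $g_i$ decreasing with $g_i\downarrow\1_C$ --- possible since $\overline{U_i}\downarrow C$ --- dominated convergence against the finite measure $d[S]^p$, together with the occupation clause $\int_0^T\1_{\{S(t)\in C\}}\,d[S]^p=0$ of Assumption~\ref{ass5}, forces the bound to $0$, whence $R^n\to0$.

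The boundary intervals with $t_k=0$ or $t_{k+1}=T$ are dealt with exactly as in Theorem~\ref{itoformula1}, using only continuity of $f^{(m)}$ and of $S$. The delicate point, and the one I expect to require the most care, is the order of the two limits: the good-interval estimate is valid only after $\op{osc}(S,\pi_n)$ has dropped below the buffer $\rho_i$, so $n\to\infty$ must be taken for each fixed $i$ before $i\to\infty$. The nesting $\overline{U_{i+1}}\subset U_i$ is precisely what furnishes the positive buffer $\rho_i$ that makes the good/bad dichotomy stable under small but nonzero oscillation, while the monotone shrinkage $\overline{U_i}\downarrow C$ is what lets the $p$-variation mass sitting on the bad intervals collapse onto the $[S]^p$-null set $C$. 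No fractional (Caputo) derivative is computed anywhere; only the uniform Hölder behaviour encoded in Assumption~\ref{ass5} is used.
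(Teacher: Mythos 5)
Your proof is correct, but it takes a different route from the paper's. The paper does not prove this theorem directly: it declares it a special case of the path-dependent formula (Theorem \ref{ItoFunctionalFormula}), obtained by viewing $f$ as the non-anticipative functional $F(t,\omega)=f(\omega(t))$ and building the cut-off functionals $g_k$ of Assumption \ref{funcass1} from the sets $U_i$ (see the remark following that theorem). What you do instead is re-run the proof of Theorem \ref{itoformula1} directly at the level of functions, and your argument is in effect the specialization of the functional proof: the same Taylor decomposition $f(S(t))-f(S(0))=L^n+R^n$, the same good/bad splitting of partition intervals (your $U_i^c$ versus $U_i$ dichotomy matches the paper's split of $\pi_n$ along $E_k$), the same crude bound $M\,|S(t_{k+1})-S(t_k)|^p$ on the bad part dominated by a continuous cut-off and passed to the limit via weak convergence of the measures $\mu^n$, and the same final collapse onto the $[S]^p$-null set $C$. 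Your version buys self-containedness — no vertical derivatives, stopped paths or $d_\infty$ are needed — and your buffer argument (the positive distance $\rho_i$ between the compact set $S([0,T])\cap U_i^c$ and the closed set $\overline{U_{i+1}}$, guaranteed by the nesting $\overline{U_{i+1}}\subset U_i$) is a careful and correct way to make the locally uniform H\"older hypothesis applicable once $\op{osc}(S,\pi_n)<\rho_i$, robust to either reading of ``locally uniformly''; you also correctly identify that the limit in $n$ must be taken before the limit in $i$. The paper's route buys economy, since a single proof covers the path-dependent case as well. One caveat, which your proof shares with the paper's: the constant $M$ on bad intervals requires $f^{(m)}$ to be locally $\alpha$-H\"older (your ``standing $C^p$-regularity''), and Assumption \ref{ass5} as literally stated does not provide this; the paper supplies it in the functional setting through the hypothesis $F\in\mathbb{C}^{0,p}_{loc}$ of Theorem \ref{ItoFunctionalFormula}, so both arguments rest on the same implicit strengthening of the stated hypotheses, and you correctly isolate the single place where it enters.
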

 
        This is a special case of Theorem \ref{ItoFunctionalFormula} on the path-dependent   It\^o formula, whose proof will be given in Sec. \ref{sec.pathdependent}.

    \subsection{Change of variable formula for time-dependent functions}\label{sec.timedependent}
    We now extend the change of variable formula to time dependent functions such as $f(t,x) = |x-g(t)|^p$ where $g$ is a smooth function.
    
    For $f \in C^{1,p}([0,T]\times\mathbb{R})$ right-differentiable in the time variable, we define the open set  
    \begin{eqnarray*}
        \Gamma_f = &&\{(t,x) \in [0,T]\times \mathbb{R}: \exists \text{ open set }U \ni (t,x) \text{ such that }C_{a+}^p f(s,b) \text{ is continuous on } \\
        &&\{(a,b) \in U_s\times U_s: a \leq b\} \text{ uniformly on }P_t(U)\}
    \end{eqnarray*}
    Here $U_s$ means the intersection of $U$ and $t=s$ and $P_t(U)$ is the projection of $U$ to $t-$axis. For the change of variable formula for time-dependent functions, we need the following assumption:
    \begin{assumption}\label{AssGammafS}
    \[
        \int_0^T \1_{\{(t,S(t)) \notin \Gamma_f\}}d[S]^p(t) = 0
    \]
    \end{assumption}
    
    \begin{theorem}\label{itoformula3}
        Let $f \in C^{1,p}([0,T]\times\mathbb{R})$ with left local derivative on $\mathbb{R}$ on $[0,T]$, $S\in V_p(\pi)$. Under  Assumption $\ref{AssGammafS}$, we have
        \[
            f(t,S(t)) - f(0,S(0)) = \int_0^t \partial_t f(u,S(u))du + \int_0^t D_xf(u,S(u))dS(u)
        \]  
        where $m = \lfloor p\rfloor$ and 
        \[
            \int_0^t D_x f(u,S(u))dS(u) = \lim_{n\rightarrow\infty}\sum_{t_i \in \pi_n}\sum_{j=1}^{m}\frac{f^{(j)}(t\wedge t_i, S(t\wedge t_i))}{j!}(S(t\wedge t_{i+1}) - S(t\wedge t_i))^j.
        \]
    \end{theorem}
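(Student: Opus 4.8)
The plan is to follow the same route as the proof of Theorem \ref{itoformula1}, after first peeling off the time dependence by splitting each increment into a \emph{time} increment (with the spatial argument frozen) and a \emph{spatial} increment (with time frozen). Starting from the telescoping identity $f(t,S(t))-f(0,S(0))=\sum_{[t_i,t_{i+1}]\in\pi_n}\big(f(t\wedge t_{i+1},S(t\wedge t_{i+1}))-f(t\wedge t_i,S(t\wedge t_i))\big)$, I would write each summand as
\[
\big(f(t\wedge t_{i+1},S(t\wedge t_{i+1}))-f(t\wedge t_i,S(t\wedge t_{i+1}))\big)+\big(f(t\wedge t_i,S(t\wedge t_{i+1}))-f(t\wedge t_i,S(t\wedge t_i))\big),
\]
so that the first group carries the time integral while the second group is exactly the object treated in Theorem \ref{itoformula1}, with base point $S(t_i)$ and frozen time slice $x\mapsto f(t_i,x)$. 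This choice of split is dictated by the statement, whose compensated Riemann sum is anchored at $(t_i,S(t_i))$.

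For the time increments I would use that $f\in C^{1,p}$ to write $f(t\wedge t_{i+1},y)-f(t\wedge t_i,y)=\int_{t\wedge t_i}^{t\wedge t_{i+1}}\partial_t f(s,y)\,ds$ with $y=S(t\wedge t_{i+1})$ (valid for the continuous right time-derivative). Summing and comparing with $\int_0^t\partial_tf(s,S(s))\,ds$, the discrepancy is $\sum_i\int_{t_i}^{t_{i+1}}\big(\partial_tf(s,S(t_{i+1}))-\partial_tf(s,S(s))\big)\,ds$, which is bounded by $T$ times the modulus of continuity of $\partial_t f$ on $[0,T]\times K$ (with $K\supset S([0,T])$ compact) evaluated at $\op{osc}(S,\pi_n)$. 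Since $\op{osc}(S,\pi_n)\to 0$ this vanishes as $n\to\infty$, giving the $\int_0^t\partial_t f(u,S(u))\,du$ term. This is the routine part, and, pleasantly, it relies only on the oscillation condition and not on any hypothesis about the mesh.

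The spatial increments are the substance. Fixing $t_i$, I would apply the fractional Taylor expansion (Proposition \ref{TaylorExpansionFrac}) to $x\mapsto f(t_i,x)$ at base point $S(t_i)$, producing the compensated Riemann sum $\sum_{j=1}^m\tfrac{1}{j!}f^{(j)}(t_i,S(t_i))(S(t_{i+1})-S(t_i))^j$ plus a fractional remainder expressed through the Caputo derivative $C_{S(t_i)^+}^p f(t_i,\cdot)$. I would then reproduce the three-region decomposition of Theorem \ref{itoformula1}: a region $C^n_\epsilon$ of increments whose base point lies within $\epsilon$ of the bad set, and the two monotone regions $C^n_{1,\epsilon}$ and $C^n_{2,\epsilon}$. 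On $C^n_\epsilon$ the remainder is bounded by $M\,g_\epsilon(S(t_i))|S(t_{i+1})-S(t_i)|^p$ and killed via Assumption \ref{AssGammafS} after letting $n\to\infty$ then $\epsilon\to 0$; off the bad set the remainder is bounded by the joint modulus of continuity of the Caputo derivative times $|S(t_{i+1})-S(t_i)|^p$, which is summable against $d[S]^p$ and vanishes thanks to $\op{osc}(S,\pi_n)\to 0$. The vanishing of the anchor $C_{S(t_i)^+}^p f(t_i,\cdot)(S(t_i))=f^{(p+)}(t_i,S(t_i))=0$ whenever $(t_i,S(t_i))\in\Gamma_f$ follows slice by slice from Proposition \ref{almostsurelyzero} combined with the continuity built into $\Gamma_f$, exactly as in Lemma \ref{lemGamma_f}.

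The one genuinely new point, and the main obstacle, is uniformity in the time variable: in Theorem \ref{itoformula1} the modulus $\omega_\epsilon$ in the estimate $|C_{a_1^+}^p f(b_1)-C_{a_2^+}^p f(b_2)|\le\omega_\epsilon(\cdot)$ arose from joint continuity of the Caputo derivative on a single copy of $\R^2$, whereas here the base times $t_i$ sweep across $[0,T]$ and each contributes a distinct time slice. This is precisely why the time-dependent $\Gamma_f$ is defined with the continuity of $C_{a^+}^p f(s,b)$ holding \emph{uniformly on} $P_t(U)$: that uniformity furnishes a single modulus $\omega_\epsilon$ valid across all relevant slices, so the per-slice estimates of Theorem \ref{itoformula1} can be summed against $d[S]^p$ without losing control in $t$. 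Once this uniform modulus is secured, the remainder sum vanishes as $n\to\infty$ and then $\epsilon\to 0$, the surviving leading terms assemble into the Föllmer integral $\int_0^t D_xf(u,S(u))\,dS(u)$, and adding the time integral yields the stated change of variable formula.
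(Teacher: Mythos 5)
Your proposal follows essentially the same route as the paper's proof: the same splitting of each increment into a time part and a spatial part anchored at $(t_i,S(t_i))$, the same treatment of the time term via uniform continuity of $\partial_t f$ on a compact set, and the same three-region analysis of the spatial remainder, resting on the vanishing of $f^{(p+)}(t,\cdot)$ on the slices of $\Gamma_f$ (as in Lemma \ref{lemGamma_f}) and on the uniform-in-time modulus of continuity of the Caputo derivative that the definition of the time-dependent $\Gamma_f$ is designed to provide. The only immaterial difference is that near the bad set you pass to the limit with a continuous cutoff $g_\epsilon$ (evaluated, to be precise, at the space-time point $(t_i,S(t_i))$ rather than at $S(t_i)$ alone) as in Theorem \ref{itoformula1}, whereas the paper applies the portmanteau theorem to the closed sets $\{t:(t,S(t))\notin\Gamma_f^\epsilon\}$; both exploit the weak convergence of $\mu_n$ in the same way.
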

    \begin{proof}
        For simplicity, we only prove the case $t=T$ here. As before we write 
        \begin{eqnarray*}
            f(T,S(T)) - f(0,S_0) &=& \sum_{[t_i,t_{i+1}]\in\pi_n} (f(t_{i+1},S(t_{i+1})) - f(t_i,S(t_i))) \\
            &=& \sum_{[t_i,t_{i+1}]\in\pi_n} (f(t_{i+1},S(t_{i+1})) - f(t_i,S(t_{i+1}))) \\
            &+& \sum_{[t_i,t_{i+1}]\in\pi_n} (f(t_{i},S(t_{i+1})) - f(t_i,S(t_i)))
        \end{eqnarray*}
        And for the first sum we have 
        \[
            \sum_{[t_i,t_{i+1}]\in\pi_n} (f(t_{i+1},S(t_{i+1})) - f(t_i,S(t_{i+1}))) = \sum_{[t_i,t_{i+1}]\in\pi_n}\int_{t_i}^{t_{i+1}}\partial_tf(u,S(t_{i+1})) du
        \]
        Let's write $S^n(t) = \sum_{t_i\in\pi_n} S(t_{i+1})\1_{[t_i,t_{i+1})}(t) + S(T)\1_{\{T\}}(t)$. Then above sum can be written as 
        \begin{eqnarray*}
            &&\sum_{[t_i,t_{i+1}]\in\pi_n} (f(t_{i+1},S(t_{i+1})) - f(t_i,S(t_{i+1}))) \\
            &=& \sum_{[t_i,t_{i+1}]\in\pi_n}\int_{t_i}^{t_{i+1}}\partial_tf(u,S^n(u)) du = \int_0^T \partial_t f(u,S^n(u))du
        \end{eqnarray*}
        Now since $||S^n - S||_{\infty} \rightarrow 0$ as $n\rightarrow\infty$ due to continuity of $S$, we see that 
        \[
            \lim_{n\rightarrow\infty}\sum_{[t_i,t_{i+1}]\in\pi_n} (f(t_{i+1},S(t_{i+1})) - f(t_i,S(t_{i+1}))) = \int_0^T \partial_t f(u,S(u))du
        \]
        Now let's deal with the second sum
        \begin{eqnarray*}
	    &&\sum_{[t_i,t_{i+1}]\in\pi_n} f(t_i,S(t_{i+1})) - f(t_i,S(t_i)) \\
	    &=&  \sum_{[t_i,t_{i+1}]\in\pi_n}\left(f(t_i,S(t_{i+1})) - f(t_i,S(t_i)) - \cdots - \frac{D_x^mf(t_i,S( t_i))}{n!}(S(t_{i+1}) - S(t_i))^n\right) \\
	    &+& \sum_{[t_i,t_{i+1}]\in\pi_n} \sum_{j=1}^m \frac{D_x^jf(t_i,S(t_i))}{j!}(S(t_{j+1}) - S(t_j))^j \\
	    &=& L^n + \sum_{[t_i,t_{i+1}]\in\pi_n} \frac{1}{\Gamma(m)}\int_{S(t_i)}^{S( t_{i+1})}(D_x^mf(t_i,r) - D_x^mf(t_i,S(t_i)))(S(t_{i+1}) - r)^{m-1}dr
	\end{eqnarray*}
	Here $L^n = \sum_{[t_i,t_{i+1}]\in\pi_n} \sum_{j=1}^m \frac{D_x^jf(t_i,S(t_i))}{j!}(S(t_{j+1}) - S(t_j))^j$. Now let's define set $C^n_\epsilon = \{[t_i,t_{i+1}]\in\pi_n: (t_i,S_{t_i}) \notin \Gamma_f^\epsilon\}$. Here $\Gamma_f^\epsilon = \{(t,x) \in \Gamma_f: d((t,x),\partial\Gamma_f) > \epsilon\}$. Then $(\Gamma_f^\epsilon)^c := [0,T]\times\mathbb{R}\slash\Gamma_f^\epsilon$ is a closed set. We have for $\alpha = p-m$, 
	\begin{eqnarray*}
	    &&\left|\sum_{[t_i,t_{i+1}]\in C_\epsilon^n} \frac{1}{\Gamma(m)}\int_{S(t_i)}^{S( t_{i+1})}(D_x^mf(t_i,r) - D_x^mf(t_i,S(t_i)))(S(t_{i+1}) - r)^{m-1}dr\right| \\
	    &\leq& \left|\sum_{[t_i,t_{i+1}]\in C_\epsilon^n} \frac{1}{\Gamma(m)}\int_{S(t_i)}^{S( t_{i+1})}M|r-S(t_i)|^\alpha|S(t_{i+1}) - r|^{m-1}dr\right| \\
	    &=& \sum_{[t_i,t_{i+1}]\in C_\epsilon^n} \frac{M\Gamma(\alpha+1)}{\Gamma(p)}|S(t_{i+1}) - S(t_i)|^p \\
	    &=& \frac{M\Gamma(\alpha+1)}{\Gamma(p)}\sum_{[t_i,t_{i+1}]\in \pi_n}\1_{\{(t_i,S_{t_i}) \notin \Gamma_f^\epsilon\}}|S(t_{i+1}) - S(t_i)|^p \\
	    &=& \frac{M\Gamma(\alpha+1)}{\Gamma(p)}\mu_n(\{t:(t,S_{t}) \notin \Gamma_f^\epsilon\})
	\end{eqnarray*}
	Since $(\Gamma_f^\epsilon)^c$ is closed, the set $\{t:(t,S_{t}) \notin \Gamma_f^\epsilon\} = \{t:(t,S_{t}) \in (\Gamma_f^\epsilon)^c\}$ is also closed. Then by portmanteau theorem, we have 
	\[
	    \limsup_{n\rightarrow\infty} \mu_n(\{t:(t,S_{t}) \notin \Gamma_f^\epsilon\}) \leq \mu(\{t:(t,S_{t}) \notin \Gamma_f^\epsilon\})
	\]
	Let $\epsilon\rightarrow0$, since $\Gamma_f$ is open, we have
	\[
	    \lim_{\epsilon\rightarrow0} \mu(\{t:(t,S_{t}) \notin \Gamma_f^\epsilon\}) =  \mu(\{t:(t,S_{t}) \notin \Gamma_f\}) = 0
	\]
	By assumption $\ref{AssGammafS}$. Hence 
	\[
	    \lim_{\epsilon\rightarrow0}\lim_{n\rightarrow\infty}\left|\sum_{[t_i,t_{i+1}]\in C_\epsilon^n} \frac{1}{\Gamma(m)}\int_{S(t_i)}^{S( t_{i+1})}(D_x^mf(t_i,r) - D_x^mf(t_i,S(t_i)))(S(t_{i+1}) - r)^{m-1}dr\right| = 0
	\]
	Next we define two sets $C_{\epsilon,1}^n = \{[t_i,t_{i+1}] \notin C_\epsilon^n: S(t_i) \leq S(t_{i+1})\}$ and $C_{\epsilon,2}^n = \{[t_i,t_{i+1}] \notin C_\epsilon^n: S(t_i) > S(t_{i+1})\}$. On the set $C_{\epsilon,1}^n$, we have 
	\begin{eqnarray*}
        &&\left|\sum_{[t_i,t_{i+1}]\in C_{\epsilon,1}^n} \frac{1}{\Gamma(m)}\int_{S(t_i)}^{S( t_{i+1})}(D_x^mf(t_i,r) - D_x^mf(t_i,S(t_i)))(S(t_{i+1}) - r)^{m-1}dr\right| \\
	    &=& \left|\sum_{[t_i,t_{i+1}]\in C^n_{\epsilon,1}}\frac{1}{\Gamma(p)}\int_{S(t_i)}^{S(t_{i+1})}(S(t_{i+1}) - s)^{p-1}C_{S(t_i)^+}^p f(t_i,s)ds\right|
	\end{eqnarray*}
	and on the set $C_{\epsilon,2}^n$, we have 
	\begin{eqnarray*}
        &&\left|\sum_{[t_i,t_{i+1}]\in C_{\epsilon,2}^n} \frac{1}{\Gamma(m)}\int_{S(t_i)}^{S( t_{i+1})}(D_x^mf(t_i,r) - D_x^mf(t_i,S(t_i)))(S(t_{i+1}) - r)^{m-1}dr\right| \\
	    &=& \left|\sum_{[t_i,t_{i+1}]\in C_{\epsilon,2}^n}\frac{1}{\Gamma(m)\Gamma(\alpha)}\int_{S(t_{i+1})}^{S(t_i)}\int_{S(t_{i+1})}^s(S(t_{i+1}) -r)^{m-1}\frac{C_{r^+}^p f(t_i,s)}{(S(t_i) - s)^{1-\alpha}}drds\right|
	\end{eqnarray*}
	We notice that if $(t_i,S_{t_i}) \notin C_\epsilon^n$ and $n$ is large enough, then $(t_i,S_{t_i}),(t_{i+1},S_{t_{i+1}})$ is in the closure of $\Gamma_f^{\epsilon/2}\cup [0,T]\times K$, which is a compact set. Here $K$ is chosen so that ${S_t}_{t\in[0,T]}$ lies in $K$. This means that $C_{x+}^p f(t_i,x)$ is continuous in a neighborhood of $S(t_i)$, hence they are identically zero. Then by uniform continuous property, we can have a modulus of continuity $\omega_\epsilon$ such that 
	\[
	    |C_{a_1^+}^p f(t,b_1) - C_{a_2^+}^p f(t,b_2)| \leq \omega_\epsilon(\sqrt{(a_2-a_1)^2 + (b_2-b_1)^2})
	\]
	for all $(t,a_1),(t,a_2),(t,b_1),(t,b_2) \in \Gamma_f^{\frac{\epsilon}{2}}$. Hence we have for $j = 1,2$
	\begin{eqnarray*}
	    &&\left|\sum_{[t_i,t_{i+1}]\in C_{\epsilon,j}^n} \frac{1}{\Gamma(m)}\int_{S(t_i)}^{S( t_{i+1})}(D_x^mf(t_i,r) - D_x^mf(t_i,S(t_i)))(S(t_{i+1}) - r)^{m-1}dr\right| \\
	    &\leq& \sum_{[t_i,t_{i+1}]\in C_{\epsilon,j}^n}M\omega_\epsilon(S(t_{i+1}) - S(t_i))|S(t_{i+1}) - S(t_i)|^p  \\
	    &\leq& M\omega_\epsilon(\sup\{|S(t_{i+1}) - S(t_i)|\}) \mu_n([0,T])
	\end{eqnarray*}
	for some constant $M$. And let $n\rightarrow\infty$, we see that 
	\[
	    \lim_{n\rightarrow\infty}\left|\sum_{[t_i,t_{i+1}]\in C_{\epsilon,j}^n} \frac{1}{\Gamma(m)}\int_{S(t_i)}^{S( t_{i+1})}(D_x^mf(t_i,r) - D_x^mf(t_i,S(t_i)))(S(t_{i+1}) - r)^{m-1}dr\right| = 0
	\]
	Hence we finally obtain that 
	\[
	    \lim_{n\rightarrow\infty}\left|\sum_{[t_i,t_{i+1}]\in \pi_n} \frac{1}{\Gamma(m)}\int_{S(t_i)}^{S( t_{i+1})}(D_x^mf(t_i,r) - D_x^mf(t_i,S(t_i)))(S(t_{i+1}) - r)^{m-1}dr\right|=0
	\]
	and hence the result.
    \end{proof}
    We can do as the time-independent case to extend the space of functions for fixed path $S$ using approximation in the $C^{1,p}$ space and denote the result space as $E_S$. 
    \begin{remark}
        As   in the time-independent case, we can use a uniform convergence assumption to replace Assumption $\ref{AssGammafS}$:
        \begin{assumption}\label{AssTime}
            There exists a sequence of open set $U_i \in \mathbb{R}^+\times \mathbb{R}$ such that $\bar{U}_{i+1} \subset U_{i}$ and $\lim_{y\rightarrow x} \frac{f^{(m)}(t,y) - f^{(m)}(t,x)}{|y-x|^\alpha} = 0$ locally uniformly outside of $U_i$. Furthermore, Let $C = \cap_{i}^\infty \bar{U}_{i}$, we have 
            \[
                \int_o^T \1_{\{(t,S(t)) \in C\}} d[S](t) = 0
            \]  
        \end{assumption}
        Under this assumption, Theorem $\ref{itoformula3}$ still hold.
    \end{remark}
    \begin{theorem}\label{itoformulatime}
        Let $f \in C^{1,p}([0,T]\times\mathbb{R})$ and $S\in V_p(\pi)$. Under Assumption $\ref{AssTime}$, we have
        \[
            f(t,S(t)) - f(0,S(0)) = \int_0^t \partial_t f(u,S(u))du + \int_0^t D_xf(u,S(u))dS(u)
        \]  
        where $m = \lfloor p\rfloor$ and 
        \[
            \int_0^t D_x f(u,S(u))dS(u) = \lim_{n\rightarrow\infty}\sum_{t_i \in \pi_n}\sum_{j=1}^{m}\frac{f^{(j)}(t\wedge t_i, S(t\wedge t_i))}{j!}(S(t\wedge t_{i+1}) - S(t\wedge t_i))^j.
        \]
    \end{theorem}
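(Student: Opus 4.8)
The plan is to reprise the proof of Theorem~\ref{itoformula3} essentially verbatim up to the remainder estimate, and there to substitute the local uniform Hölder control supplied by Assumption~\ref{AssTime} for the appeal to joint continuity of the Caputo derivative. The decomposition of $f(t,S(t))-f(0,S(0))$ into a time-increment part and a space-increment part is unchanged, and—crucially—we never need to form the Caputo derivative $C^p_{a^+}f$, so the hypothesis of Theorem~\ref{itoformula3} that $f$ admit a left local derivative can be dropped. As there, I would treat only $t=T$, the general case following by stopping the partition at $t$.

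First I would telescope and insert the intermediate point $(t_i,S(t_{i+1}))$, writing
\[
f(T,S(T))-f(0,S(0)) = \sum_{[t_i,t_{i+1}]\in\pi_n}\bigl(f(t_{i+1},S(t_{i+1}))-f(t_i,S(t_{i+1}))\bigr) + \sum_{[t_i,t_{i+1}]\in\pi_n}\bigl(f(t_i,S(t_{i+1}))-f(t_i,S(t_i))\bigr).
\]
The time-increment sum is handled exactly as in Theorem~\ref{itoformula3}: expressing it as $\int_0^T\partial_t f(u,S^n(u))\,du$ for the piecewise-constant approximation $S^n$, the uniform convergence $\|S^n-S\|_\infty\to0$ together with continuity of $\partial_t f$ yields $\int_0^T\partial_t f(u,S(u))\,du$. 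For the space-increment sum I apply the Taylor expansion in $x$ at base point $S(t_i)$ to split it as $L^n+R^n$, where $L^n$ is the order-$m$ compensated Riemann sum and
\[
R^n = \sum_{[t_i,t_{i+1}]\in\pi_n}\frac{1}{\Gamma(m)}\int_{S(t_i)}^{S(t_{i+1})}\bigl(D_x^m f(t_i,r)-D_x^m f(t_i,S(t_i))\bigr)(S(t_{i+1})-r)^{m-1}\,dr.
\]
It then suffices to show $R^n\to0$, since then $L^n$ converges to the asserted integral $\int_0^T D_xf(u,S(u))\,dS(u)$.

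To estimate $R^n$ I would fix $\epsilon>0$, choose $N$ so large that $\mu(\{t:(t,S(t))\in\bar U_N\})<\epsilon$ (possible because $\bar U_N\downarrow C$ and $\mu(\{t:(t,S(t))\in C\})=\int_0^T\1_{\{(t,S(t))\in C\}}\,d[S]^p(t)=0$ by Assumption~\ref{AssTime}), and split the intervals of $\pi_n$ into a good family with $(t_i,S(t_i))\notin\bar U_N$ and a bad family with $(t_i,S(t_i))\in\bar U_N$. On the good family, $(t_i,S(t_i))$ lies outside $U_N$, so the local uniform convergence hypothesis furnishes a $\delta>0$ with $|D_x^m f(t,r)-D_x^m f(t,x)|\le\epsilon|r-x|^\alpha$ for $(t,x)$ in $([0,T]\times K)\setminus U_N$ and $|r-x|\le\delta$, where $K\supset S([0,T])$ is compact; since $\op{osc}(S,\pi_n)\to0$, every increment satisfies $|r-S(t_i)|\le\delta$ once $n$ is large, whence each good remainder is at most $\tfrac{\epsilon}{m!}|S(t_{i+1})-S(t_i)|^p$ and their sum is at most $\tfrac{\epsilon}{m!}[S]^p(T)$. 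On the bad family the crude bound $M|S(t_{i+1})-S(t_i)|^p$ coming from $f\in C^{1,p}$ gives a total at most $M\,\mu_n(\{t:(t,S(t))\in\bar U_N\})$.

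The main obstacle—and the one place care is needed—is the interchange of the partition limit $n\to\infty$ with the shrinking neighborhoods $\bar U_N\downarrow C$. Since $\bar U_N$ is closed and $S$ is continuous, $\{t:(t,S(t))\in\bar U_N\}$ is closed, so the portmanteau theorem gives $\limsup_n\mu_n(\{t:(t,S(t))\in\bar U_N\})\le\mu(\{t:(t,S(t))\in\bar U_N\})<\epsilon$. Combining the two families yields $\limsup_n|R^n|\le\tfrac{\epsilon}{m!}[S]^p(T)+M\epsilon$, and letting $\epsilon\to0$ gives $R^n\to0$. I expect the only genuinely delicate points to be verifying that a single $\delta$ works uniformly over all base points $S(t_i)$ outside $U_N$ (this is exactly the force of \emph{locally uniformly} in Assumption~\ref{AssTime}) and keeping the order of limits straight ($\epsilon$, then $N$, then $n$); the remaining estimates are identical to those already carried out for Theorem~\ref{itoformula3}.
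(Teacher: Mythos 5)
Your proof is correct, and it fleshes out exactly what the paper leaves implicit. The paper never writes a standalone proof of Theorem~\ref{itoformulatime}: it asserts in the preceding remark that ``under this assumption, Theorem~\ref{itoformula3} still holds,'' and its formal derivation comes only later, in the remark following Theorem~\ref{ItoFunctionalFormula}, where the statement is obtained as a special case of the path-dependent functional formula by constructing the cutoff functionals $g_k(t,\omega)=y_k(t,\omega(t))$ and $g(t,\omega)=\1_{\{(t,\omega(t))\notin\Gamma_f\}}$ required by Assumption~\ref{funcass1}. You instead give a direct, self-contained argument that reprises the proof of Theorem~\ref{itoformula3}, and your key substitutions are sound: choosing $N$ with $\mu(\{t:(t,S(t))\in\bar U_N\})<\epsilon$ is legitimate by continuity from above, since $\bar U_{N+1}\subset U_N\subset\bar U_N$ forces $\{t:(t,S(t))\in\bar U_N\}\downarrow\{t:(t,S(t))\in C\}$, which is $\mu$-null by Assumption~\ref{AssTime}; the portmanteau bound on the closed set $\{t:(t,S(t))\in\bar U_N\}$ is the same device the paper uses in Theorem~\ref{itoformula3}; and the uniform H\"older control on $([0,T]\times K)\setminus U_N$ (a compact set, so ``locally uniformly'' applies) replaces the joint continuity of the Caputo derivative. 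Your route buys two things the paper's functional detour does not make visible: you never need to form $C^p_{a^+}f$ at all, which is why the local fractional differentiability hypothesis of Theorem~\ref{itoformula3} can be dropped from the statement (consistent with the paper's own remark about Assumption~\ref{ass5}); and because you estimate the Taylor remainder by the plain H\"older bound $|D_x^mf(t,r)-D_x^mf(t,x)|\le\epsilon|r-x|^\alpha$ rather than by Caputo integral representations, you avoid the case split into increasing and decreasing increments ($C^n_{\epsilon,1}$ versus $C^n_{\epsilon,2}$) that the paper's proof of Theorem~\ref{itoformula3} has to carry. What the paper's route buys in exchange is economy: once Theorem~\ref{ItoFunctionalFormula} is proved, the time-dependent statement follows with no new estimates, only the verification of Assumption~\ref{funcass1}.
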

    
    \subsection{Extension to path-dependent functionals}\label{sec.pathdependent}
    In this subsection $S_t$ will designate the path stopped at   $t$, i.e. $S_t(s) = S(s\wedge t)$.
    
    We consider the space $D([0,T],\mathbb{R})$ of c\`adl\`ag paths from $[0,T]$ to $\mathbb{R}$. Let
    \[
        \Lambda_T = \{(t,\omega_t):(t,\omega) \in [0,T]\times D([0,T],\mathbb{R})\}
    \]
    be the space of stopped paths. This is a complete metric space equipped with 
    \[
        d_{\infty}((t,\omega),(t',\omega')) := \sup_{s\in[0,T]} |\omega(s\wedge t) - \omega'(s\wedge t')| + |t-t'|
    \]
    We will also need to stop paths 'right before' a given time, and set for $t>0$
    \[
        \omega_{t-}(s) := \left\{
        \begin{array}{cc}
            \omega(s), &  s < t \\
            \lim_{r\uparrow t}, & s\geq t
        \end{array}
        \right.
    \]
    while $\omega_{0-} = \omega_{0}$. We recall here some concepts from the non-anticipative functional calculus \cite{CF10B,cont2012}.
    \begin{definition}
        A non-anticipative functional is a map $F:\Lambda_T\rightarrow \mathbb{R}$. Let $F$ be a non-anticipative functional.
        \begin{itemize}
            \item We write $F \in \mathbb{C}_l^{0,0}$ if for all $t \in [0,T]$ the map $F(t,\cdot): D([0,T],\mathbb{R}) \rightarrow \mathbb{R}$ is continuous and if for all $(t,\omega)\in\Lambda_T$ and all $\epsilon>0$, there exists $\delta >0$ such that for all $(t',\omega') \in \Lambda_T$ with $t' < t$ and $d_{\infty}((t,\omega),(t',\omega')) < \delta$, we have $|F(t,\omega) - F(t',\omega')| < \epsilon$
            \item We write $F \in \mathbb{B}(\Lambda_T)$ if for every $t_0 \in [0,T)$ and every $K > 0$ there exists $C_{K,t_0} > 0$ such that for all $t \in [0,t_0]$ and all $\omega \in D([0,T],\mathbb{R})$ with $\sup_{s\in [0,t]}|\omega(s)| \leq K$, we have $|F(t,\omega)|\leq C_{K,t_0}$
            \item $F$ is horizontally differentiable at $(t,\omega)\in\Lambda_T$ if its horizontal derivative 
            \[
                \mathcal{D}F(t,\omega):=\lim_{h\downarrow 0} \frac{F(t+h,\omega_t) - F(t,\omega_t)}{h}
            \]
            exists. If it exists for all $(t,\omega)\in\Lambda_T$, then $\mathcal{D}F$ is a non-anticipative functional.
            \item $F$ is vertically differentiable at $(t,\omega) \in \Lambda_T$ if its vertical derivative 
            \[
                \nabla_\omega F(t,\omega) := \lim_{h\rightarrow0}\frac{F(t,\omega_t + h\1_{[t,T]}) - F(t,\omega_t)}{h} 
            \]
            exists. If it exists for all $(t,\omega)\in\Lambda_T$, then $\nabla_\omega F$ is a non-anticipative functional. In particular, we define recursively $\nabla_\omega^{k+1} F:=\nabla \nabla_\omega^k F$ whenever this is well defined.
            \item For $p \in \mathbb{N}_0$, we say that $F \in \mathbb{C}_b^{1,p}(\Lambda_T)$ if $F$ is horizontally differentiable and $p$ times vertically differentiable in every $(t,\omega)\in\Lambda_T$, and if $F,\mathcal{D}F,\nabla_\omega^kF \in \mathbb{C}_l^{0,0}(\Lambda_T)\cap\mathbb{B}(\Lambda_T)$ for $k=1,\cdots,p$.
        \end{itemize} 
    \end{definition}
    In order to extend our result to the functional case, we need to give the definition of fractional Caputo derivative of a non-anticipative functional. But here we choose another approach using the uniform convergence of H\"older semi-norm as stated in Remark $\ref{rem.ass4}$. 
    \begin{definition}[Vertical H\"older continuity] Let $0 < \alpha < 1$.\\
    \begin{itemize}
        \item  $F$ is vertically $\alpha-$H\"older continuous at $(t,\omega) \in \Lambda_T$ if 
        \[
            L(t,\omega):=\limsup_{h\rightarrow0}\frac{|F(t,\omega_t + h\1_{[t,T]}) - F(t,\omega_t)|}{|h|^\alpha} < \infty
        \]
        and we call $L(t,\omega)$ the H\"older coefficient of $F$ at $(t,\omega)\in\Lambda_T$
        \item  $F$ is vertically $\alpha-$H\"older continuous on $E \subset \Lambda_T$ if 
        \[
            L:=\lim_{\epsilon\rightarrow0}\sup_{(t,\omega) \in E} \sup_{|h| \leq \epsilon} \frac{|F(t,\omega_t + h\1_{[t,T]}) - F(t,\omega_t)|}{|h|^\alpha} < \infty
        \]
        and we call $L$ the H\"older coefficient of $F$ over $E$ and we denote $F \in \mathbb{C}^{0,\alpha}(E)$.
        \item Let $E_n = \{(t,\omega)\in\Lambda: d((t,\omega),(0,0)) < n\}$.  $F$ is locally vertically $\alpha-$H\"older continuous on $\Lambda_T$ if $F$ is vertically $\alpha-$H\"older continuous on $E_n$ for each $n$. We denote $F \in \mathbb{C}^{0,\alpha}_{loc}$. 
        \item For $m < p < m+1$, we say $F \in \mathbb{C}_{loc}^{0,p}$ if $\nabla_\omega^m F \in \mathbb{C}_{loc}^{0,p-m}$. For $E\subset \Lambda_T$, we say $F \in \mathbb{C}^{0,p}(E)$ if $\nabla_\omega^m F\in\mathbb{C}^{0,p-m}(E)$.
    \end{itemize}
    \end{definition}
    We now give assumptions on the functional $F \in \mathbb{C}_b^{1,m}\cap\mathbb{C}^{0,p}_{loc}$, where $m = \lfloor p\rfloor$. Given a path $S$ which admits $p-$th order variation along a sequence of partitions $\pi$. 
    \begin{assumption}\label{funcass1}
         There exists a sequence of continuous functions $g_k \in \mathbb{C}(\Lambda_T,\mathbb{R})$ with support   $E_k := supp(g_k)$ such that
         \begin{enumerate}\item $E_{k+1}\subset E_k$, $g_k|_{E_{k+1}} = 1, g_{k} \leq 1$ and $F \in \mathbb{C}_{loc}^{0,p}(\Lambda_T- E_k)$ for each $k$ with H\"older coefficient $L = 0$. 
\item 
         There exists a bounded function $g$ on $\Lambda_T$ such that $g_k\rightarrow g$ pointwise and 
         \[
            \int_{0}^T g((t,S_t))d[S]_t = 0
         \]
             \end{enumerate}
    \end{assumption}

    Define the piecewise-constant approximation $S^n$ to $S$ along the partition $\pi_n$:
    \[
        S^n(t) := \sum_{[t_i,t_{i+1}]\in\pi_n}  S(t_{j+1}) \1_{[t_j,t_{j+1})}(t) + S(T)\1_{\{T\}}(t)
    \]
    Then $\lim_{n\rightarrow\infty}||S^n-S||_\infty = 0$ whenever $osc(S,\pi_n)\rightarrow 0$.
    
    \begin{theorem}\label{ItoFunctionalFormula}
        Let $p\notin \mathbb{N}$,  $m = \lfloor p \rfloor$ and $S\in V_p(\pi)$. If $F \in \mathbb{C}_b^{1,m}\cap\mathbb{C}^{0,p}_{loc}$ satisfies Assumption  $\ref{funcass1}$  then the limit
        \[
            \int_0^T \nabla_\omega F(t,S_{t-})d^\pi S(t) := \lim_{n\rightarrow\infty} \sum_{[t_j,t_{j+1}]\pi_n} \sum_{k=1}^m \frac{1}{k!}\nabla_\omega^k F(t_j,S_{t_j-}^n)(S(t_{j+1}\wedge t) - S(t_j\wedge t))^k
        \]
        exists and we have 
        \[
            F(t,S_t) = F(0,S_0) + \int_0^t\mathcal{D}F(s,S_s)ds + \int_0^t \nabla_\omega F(s,S_s)dS(s)
        \]
    \end{theorem}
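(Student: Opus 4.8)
The plan is to follow the same scheme as the proofs of Theorems \ref{itoformula1} and \ref{itoformula3}, replacing ordinary space derivatives by the vertical derivatives $\nabla_\omega^k F$, the time derivative by the horizontal derivative $\mathcal{D}F$, and the three scalar assumptions by the single functional Assumption~\ref{funcass1}. I start from the exact telescoping identity along $\pi_n$,
\[
F(t, S_t) - F(0, S_0) = \sum_{[t_j, t_{j+1}] \in \pi_n,\, t_j \le t} \big( F(t_{j+1} \wedge t, S^n_{t_{j+1} \wedge t}) - F(t_j \wedge t, S^n_{t_j \wedge t}) \big),
\]
working throughout with the piecewise-constant path $S^n$, which is legitimate because $F \in \mathbb{C}_l^{0,0}$ and $\|S^n - S\|_\infty \to 0$ give $F(t, S^n_t) \to F(t, S_t)$. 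The crucial advantage of $S^n$ is that advancing it across one partition interval \emph{with the terminal value frozen} and then bumping only that terminal value recovers $S^n_{t_{j+1}}$ exactly: this lets me split each summand into a horizontal increment (time advances, path frozen at $S(t_j)$) and a \emph{pure} vertical increment (the terminal value moves from $S(t_j)$ to $S(t_{j+1})$).

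The horizontal part is handled exactly as the time-derivative term in Theorem~\ref{itoformula3}. Horizontal differentiability together with $\mathcal{D}F \in \mathbb{C}_l^{0,0}(\Lambda_T)$ makes $u \mapsto F(u, \omega_{t_j})$ a $C^1$ function on $[t_j, t_{j+1}]$, so that the frozen-path increment equals $\int_{t_j}^{t_{j+1}} \mathcal{D}F(u, \cdot)\,du$; summing and using $\mathcal{D}F \in \mathbb{B}(\Lambda_T)$ together with $\|S^n - S\|_\infty \to 0$ yields, by a dominated-convergence argument for Riemann sums, the limit $\int_0^t \mathcal{D}F(u, S_u)\,du$.

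For the vertical part the key observation is that, for fixed $(t, \omega)$, the map $\phi(h) := F(t, \omega_t + h \mathbf{1}_{[t,T]})$ is an ordinary real function of $h$ with $\phi^{(k)}(h) = \nabla_\omega^k F(t, \omega_t + h \mathbf{1}_{[t,T]})$, so the vertical increment admits the one-dimensional Taylor expansion with integral remainder
\[
\phi(\Delta) - \phi(0) = \sum_{k=1}^m \frac{\phi^{(k)}(0)}{k!}\, \Delta^k + \frac{1}{\Gamma(m)} \int_0^\Delta \big( \phi^{(m)}(r) - \phi^{(m)}(0) \big)(\Delta - r)^{m-1}\, dr, \qquad \Delta = S(t_{j+1}) - S(t_j).
\]
The leading sum is precisely the compensated Riemann sum defining $\int_0^t \nabla_\omega F(s, S_s)\,dS(s)$, while $F \in \mathbb{C}^{0,p}_{loc}$ bounds each remainder by a constant times the vertical $(p-m)$-Hölder modulus of $\nabla_\omega^m F$ times $|S(t_{j+1}) - S(t_j)|^p$, just as in Theorem~\ref{itoformula1}. (Left-continuity of the $\nabla_\omega^k F$ lets me freely interchange the base points $(t_j, S^n_{t_j-})$ and $(t_{j+1}, \cdot)$ in the limit, matching the integrand in the statement.)

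It remains to show that the remainder sum vanishes, which is the \emph{main obstacle}. Following the $C^n_\epsilon$/$C^n_{1,\epsilon}$/$C^n_{2,\epsilon}$ splitting of Theorem~\ref{itoformula1}, I partition the intervals according to whether $(t_j, S^n_{t_j-})$ lies inside or outside the support $E_k$ of the cutoff $g_k$. Off $E_k$, the Hölder coefficient of $\nabla_\omega^m F$ is $0$ by Assumption~\ref{funcass1}(1), so those remainders are dominated by $\omega\big(\op{osc}(S, \pi_n)\big)\sum_j |S(t_{j+1}) - S(t_j)|^p$, which tends to $0$ since the $p$-th variation sums stay bounded and $\op{osc}(S,\pi_n) \to 0$. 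The intervals meeting $E_k$ are controlled by $\sum_j g_k(t_j, S^n_{t_j-})\,|S(t_{j+1}) - S(t_j)|^p$, whose $\limsup$ as $n \to \infty$ is bounded, via weak convergence of $\mu^n$ and a portmanteau argument using continuity of $g_k$, by $\int_0^t g_k(u, S_u)\,d[S]^p(u)$; letting $k \to \infty$ with $g_k \to g$ and invoking $\int_0^t g(u, S_u)\,d[S]^p(u) = 0$ from Assumption~\ref{funcass1}(2) forces this to zero. The delicate point is the interplay of the two limits: I must take $n \to \infty$ first to exploit weak convergence and only then $k \to \infty$, and verify that the leading compensated sums converge independently of $k$, so that the integral $\int_0^t \nabla_\omega F(s, S_s)\,dS(s)$ is well-defined as a genuine limit.
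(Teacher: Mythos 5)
Your proposal is correct and follows essentially the same route as the paper's own proof: the same splitting of each increment of $F(\cdot,S^n)$ into a horizontal part (handled via $\mathcal{D}F$ and the identity $S^n_{t_j}=S^n_{t_{j+1}-}$ on $[0,t_{j+1}]$) and a pure vertical part expanded by the one-dimensional Taylor formula with integral remainder, the same bound on the remainder via the vertical $(p-m)$-H\"older modulus from $\mathbb{C}^{0,p}_{loc}$, and the same $E_k$/$E_k^c$ splitting with $n\to\infty$ taken before $k\to\infty$, using weak convergence against $g_k$ and Assumption \ref{funcass1}(2). No substantive deviation or gap to report.
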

    \begin{proof}
        We write 
        \begin{eqnarray*}
            F(t,S^n_t) - F(0,S^n_0) &=& \sum_{[t_j,t_{j+1}]\in\pi_n} (F(t_{j+1}\wedge t, S^n_{t_{j+1}\wedge t-})-F(t_j\wedge t,S^n_{t_j\wedge t-})) \\ 
            &&+ F(t,S^n_t) - F(t,S^n_{t-}) \\
            &=& \sum_{[t_j,t_{j+1}]\in\pi_n} (F(t_{j+1}\wedge t, S^n_{t_{j+1}\wedge t-})-F(t_j\wedge t,S^n_{t_j\wedge t-})) + o(1)
        \end{eqnarray*}
        We only need to consider $j$ with $t_{j+1} \leq t$ since the remainder is another $o(1)$ as $n\rightarrow\infty$ by left continuity of functional $F$. Now we split the difference into two parts:
        \[
            F(t_{j+1},S^n_{t_{j+1}-}) - F(t_j,S_{t_j-}^n) = (F(t_{j+1},S^n_{t_{j+1}-}) - F(t_j,S_{t_j}^n))+(F(t_j,S^n_{t_j}) - F(t_j,S_{t_j-}^n))
        \]  
        Now since $S_{t_j}^n = S_{t_{j+1}-}^n$ on $[0,t_{j+1}]$, we have 
        \[
            F(t_{j+1},S^n_{t_{j+1}-}) - F(t_j,S_{t_j}^n) = \int_{t_j}^{t_{j+1}}\mathcal{D}F(u,S_{t_j}^n)du = \int_{t_j}^{t_{j+1}}\mathcal{D}F(u,S^n)du
        \]
        since $\mathcal{D}F$ is non-anticipative functional. Hence we have 
        \[
            \lim_{n\rightarrow\infty} \sum_{[t_j,t_{j+1}]\in\pi_n} (F(t_{j+1},S^n_{t_{j+1}-}) - F(t_j,S_{t_j}^n)) = \int_0^t \mathcal{D}F(u,S_u)du
        \]
        by continuity. It remains to consider the term 
        \[
            F(t_j,S_{t_j}^n) - F(t_j,S_{t_j-}^n) = F(t_j,S_{t_j-}^{n,S_{t_j,t_{j+1}}}) - F(t_j,S_{t_j-}^n)
        \]
        where $S_{t_j,t_{j+1}} = S(t_{j+1}) - S(t_j)$ and $S_{t_j-}^{n,x}(s):=S_{t_j-}^n(s) + \1_{[t_j,T]}(s)x$. Now from Taylor's formula and definition of vertical derivative, we obtain 
        \begin{eqnarray*}
            F(t_j,S_{t_j-}^{n,S_{t_j,t_{j+1}}}) &=& F(t_j,S_{t_j-}^n) + \sum_{k=1}^m \frac{\nabla_\omega^k F(t_j,S_{t_j-}^n)}{k!}(S(t_{j+1}) - S(t_j))^k \\
            &+&\frac{1}{(m-1)!}\int_0^{S_{t_j,t_{j+1}}} (\nabla_\omega^m F(t_j, S_{t_j-}^{n,u}) - \nabla_\omega^m F(t_j,S_{t_j-}^n))(S_{t_j,t_{j+1}} - u)^{m-1}du
        \end{eqnarray*}
        From the definition of $\mathbb{C}^{0,p}$, we see that there exists $M > 0$ such that 
        \[
            |\nabla_\omega^m F(t_j, S_{t_j-}^{n,u}) - \nabla_\omega^m F(t_j,S_{t_j-}^n)| \leq M|u|^{p-m}
        \]
        for each $t_j$ when $n$ is large enough. This shows that
        \[
            \left|\frac{1}{(m-1)!}\int_0^{S_{t_j,t_{j+1}}} (\nabla_\omega^m F(t_j, S_{t_j-}^{n,u}) - \nabla_\omega^m F(t_j,S_{t_j-}^n))(S_{t_j,t_{j+1}} - u)^{m-1}du\right| \leq \tilde{M}|S(t_{j+1}) - S(t_j)|^p
        \]
        for some constant $\tilde{M}$. Fix $k\in\mathbb{N}$, we now divide the partitions into two parts. The first part $\pi_n^1$ contains $[t_j,t_{j+1}]$ such that $(t_j,S_{t_j-}^n) \in E_k$ and the second part $\pi_n^2$ contains $[t_j,t_{j+1}]$ such that $(t_j,S_{t_j-}^n) \notin E_k$. Then we have 
        \begin{eqnarray*}
            &&\sum_{[t_j,t_{j+1}] \in \pi_n} \left|\frac{1}{(m-1)!}\int_0^{S_{t_j,t_{j+1}}} (\nabla_\omega^m F(t_j, S_{t_j-}^{n,u}) - \nabla_\omega^m F(t_j,S_{t_j-}^n))(S_{t_j,t_{j+1}} - u)^{m-1}du\right|  \\
            &\leq& \sum_{[t_j,t_{j+1}] \in \pi_n^1}\tilde{M}|S(t_{j+1}) - S(t_j)|^p + \sum_{[t_j,t_{j+1}] \in \pi_n^2} \tilde{M}L(S_{t_j-}^n, osc(S,\pi_n))|S(t_{j+1}) - S(t_j)|^p 
        \end{eqnarray*}
        where 
        \[
            L(\omega,\epsilon) := \sup_{|h|\leq \epsilon}\frac{|\nabla_\omega^m F(t,\omega_t + h\1_{[t,T]}) - \nabla_\omega^m F(t,\omega_t)|}{|h|^{p-m}}
        \]
        and by the definition of vertically H\"older continuity, we have $\lim_{\epsilon\rightarrow0}\sup_{\omega\in E\slash E_k}L(\omega,\epsilon) = 0$ for each $k$ with $E$ a bounded subset of $\Lambda_T$ containing all stopped paths occurred in this proof. 
        Hence 
        \[
            \lim_{n\rightarrow\infty}\sum_{[t_j,t_{j+1}] \in \pi_n^2} \tilde{M}L(S_{t_j-}^n, osc(S,\pi_n))|S(t_{j+1}) - S(t_j)|^p = 0
        \]
        As for the first part, we have 
        \begin{eqnarray*}
            \lim_{n\rightarrow\infty}\sum_{[t_j,t_{j+1}] \in \pi_n^1}\tilde{M}|S(t_{j+1}) - S(t_j)|^p &\leq& \lim_{n\rightarrow\infty}\sum_{[t_j,t_{j+1}] \in \pi_n}\tilde{M}g_k((t_j,S_{t_j-}^n)|S(t_{j+1}) - S(t_j)|^p\\
            &=& \int_o^t \tilde{M}g_k((t,S_t))d[S]_t
        \end{eqnarray*}
        by assumption \ref{funcass1}. Let $k$ tends to infinity, we get the result.
    \end{proof}
    \begin{remark}
        One may derive theorems \ref{itoformula3} and  \ref{itoformulatime} from above theorem. Let $\Gamma_f$ defined as Assumption \ref{ass5}, $E_k = \{(t,x): d((t,x), \Gamma_f^c) \leq \frac{1}{k}\}$ and define 
        \[
            g((t,\omega)) = \1_{\{(t,\omega(t)) \notin \Gamma_f\}}.
        \]
       Then one may choose smooth functions $y_k$   with support $E_{k-1}$ such that $y_k|_{E_k} = 1, 0\leq y_k \leq 1$ and set
        \[
            g_k(t,\omega) := y_k(t,\omega(t)).
        \]
         $g_k$ and $g$ then satisfy 2) in Assumption \ref{funcass1}.
    \end{remark}
 
    \section{Extension to $\phi$-variation}\label{sec.phi}
    The concept of $\phi-$variation $\phi-$variation along a sequence of partitions  \cite{han2021probabilistic} extends the notion of p-th order variation to more general functions:
    \begin{definition}[$\phi$-variation along a partition sequence]
        A continuous path $S\in\mathbb{C}([0,T],\mathbb{R})$ is said to have a $\phi-$variation along a sequence of partitions $\pi=\left(\pi_n\right)_{n\geq 1}$ if $osc(S,\pi_n)\rightarrow0$ and the sequence of measures
        \[
            \mu^n:=\sum_{t_i\in\pi_n} |\phi(S(t_{i+1}) - S(t_i))|\delta(\cdot-t_i)
        \]
        converges weakly to a measure $\mu$ without atoms. In this case we write $S \in V_{\phi}(\pi)$ and $[S]^\phi(t) = \mu([0,t])$ for $t \in [0,T]$, and we call $[S]^\phi$ the $\phi-$variation of $S$.
    \end{definition}
    The function $\phi$ we considered in this section satisfies some assumption given here.
    \begin{assumption}\label{phiass1}
        There exists $m\in\mathbb{N}$ such that $\phi\in C^m$ and 
        \[
        \lim_{x\rightarrow0} \frac{\phi(x)}{x^m} = 0,\qquad, \lim_{x\rightarrow0} \frac{\phi(x)}{x^{m+1}} = \infty
        \]
        It is obvious that in this case $\phi(0) = 0$. Furthermore, we let $\phi$ map $\mathbb{R}^+$ to $\mathbb{R}^+$ and $\phi$ is either odd or even function.
    \end{assumption}
     We say a function $f$ is $\phi-$continuous if and only if for each compact set $K$, there exists a constant $M$ such that 
    \[
        \frac{|f^{(m)}(y) - f^{(m)}(x)|}{|\phi^{(m)}(y-x)|} \leq M, \qquad x,y\in K
    \]
    We denote the collection of these functions by $C^\phi$.
    \begin{remark}
    As in the fractional case, if we consider the Taylor expansion of $f$ up to '$\phi$', we should write it as 
    \[
        f(y) = f(x) + \sum_{k=1}^m \frac{f^{(k)}}{k!}(y-x)^k + g(x)\phi(y-x) + o(\phi(y-x))
    \]
    it is obvious that if $g$ is continuous, then we would have the remainder term in the change of variable formula. However, for the function $\phi$ satisfying assumption $\ref{phiass1}$, the function $g$ in the Taylor expansion above would be almost surely zero, the same as stated in proposition $\ref{almostsurelyzero}$ for $|x|^p$. What we need is just to change $(y-x)^p$ by $\phi(y-x)$ in the proof of proposition $\ref{almostsurelyzero}$ and notice that if $f$ is differential at some point $x$, then for the case $m=0$,
    \[
        \lim_{y_n\rightarrow x} \frac{f(y_n) - f(x)}{\phi(y_n-x)} = \lim_{y_n\rightarrow x} \frac{f(y_n) - f(x)}{y_n-x}\frac{y_n-x}{\phi(y_n-x)} = 0
    \]
    Follow the steps in the proof of Proposition \ref{almostsurelyzero}, we can show that $g = 0$ almost everywhere. Hence a regular remainder should not appear in this case.
    \end{remark}
    
    We will use the uniform convergence assumption here. 
    \begin{assumption}\label{phiass2}
         There exists a sequence of open sets $U_i$ such that $\bar{U}_{i+1} \subset U_{i}$ for $i=1,2,\cdots$ and $\lim_{y\rightarrow x} \frac{f^{(m)}(y) - f^{(m)}(x)}{|\phi^{(m)}(y-x)|} = 0$ locally uniformly outside of each $U_i$. Furthermore, Let $C = \cap_{i}^\infty \bar{U}_{i}$, we have 
            \[
                \int_0^T \1_{\{S(t) \in C\}} d[S]^\phi(t) = 0
            \]  
    \end{assumption}
    Under the assumption $\ref{phiass2}$, we have the theorem
    \begin{theorem}\label{itophi}
        Let $\phi$ satisfy the assumption $\ref{phiass1}$, $\{\pi_n\}$ be a given sequence of partitions and $S \in V_\phi(\pi)$. Then for every function $f$ satisfies assumption $\ref{phiass2}$ we have the formula 
        \[
            f(S(t)) - f(S(0)) = \int_0^t f'(S(u))dS(u)
        \]
        holds, where the integral 
        \[
            \int_0^t f'(S(u))dS(u) = \lim_{n\rightarrow\infty}\sum_{[t_j,t_{j+1}]\in\pi_n} \sum_{k=1}^m \frac{f^{(k)}(S(t_j))}{k!}(S(t_{j+1}\wedge t) - S(t_j\wedge t))^k
        \]
        is defined as a (pointwise) limit of compensated Riemann sums.
    \end{theorem}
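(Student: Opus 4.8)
The plan is to follow the proof of Theorem \ref{itoformula1} closely, with $\phi$ playing the role of the map $x\mapsto|x|^p$ and $\phi^{(m)}$ playing the role of the fractional (Caputo) derivative, and to borrow the weak-convergence/portmanteau bookkeeping from Theorem \ref{itoformula3}. Writing $f(S(t))-f(S(0))$ as a telescoping sum over $[t_j,t_{j+1}]\in\pi_n$ and applying Taylor's formula with integral remainder to order $m$ (the integer from Assumption \ref{phiass1}), I would split each increment as
\[
f(S(t\wedge t_{j+1})) - f(S(t\wedge t_j)) = \sum_{k=1}^m \frac{f^{(k)}(S(t\wedge t_j))}{k!}(S(t\wedge t_{j+1}) - S(t\wedge t_j))^k + R_j^n,
\]
where $R_j^n = \frac{1}{(m-1)!}\int_{S(t\wedge t_j)}^{S(t\wedge t_{j+1})} (f^{(m)}(r) - f^{(m)}(S(t\wedge t_j)))(S(t\wedge t_{j+1}) - r)^{m-1}\,dr$. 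Summing the first term gives exactly the compensated Riemann sum $L^n$ defining the integral, so the statement reduces to showing $\sum_j R_j^n \to 0$; the existence of the limit then follows automatically since $f(S(t))-f(S(0)) = L^n + \sum_j R_j^n$ is independent of $n$.

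The $\phi$-specific ingredient is replacing the bound $|R_j^n|\lesssim|\text{increment}|^p$ of the fractional case by a bound in terms of $\phi$. From Assumption \ref{phiass1}, the relation $\phi(x)/x^m\to 0$ forces $\phi^{(k)}(0)=0$ for $k=0,\dots,m-1$, so that $\phi$ is the $m$-fold Riemann–Liouville integral of $\phi^{(m)}$:
\[
\frac{1}{(m-1)!}\int_0^h (h-s)^{m-1}\phi^{(m)}(s)\,ds = \phi(h), \qquad h>0.
\]
Combining this identity with the $\phi$-continuity of $f$, namely $|f^{(m)}(y)-f^{(m)}(x)|\le M|\phi^{(m)}(y-x)|$ on the compact range of $S$, yields the crude estimate $|R_j^n|\le M'|\phi(S(t\wedge t_{j+1})-S(t\wedge t_j))|$ for every interval, where one uses that $\phi^{(m)}$ has constant sign near $0$ so that $\int_0^h|\phi^{(m)}(s)|(h-s)^{m-1}ds\le C|\phi(h)|$. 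This is the exact analogue of the constant-$M$ bound of Theorem \ref{itoformula1}, and summing it controls $\sum_j|R_j^n|$ by a quantity converging to $M'[S]^\phi(t)<\infty$.

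To send the remainder to zero I would stratify the intervals using the nested open sets $U_i$ of Assumption \ref{phiass2}. On intervals whose left value $S(t_j)$ lies outside $U_i$, the local uniform convergence $(f^{(m)}(y)-f^{(m)}(x))/|\phi^{(m)}(y-x)|\to 0$ lets me replace $M'$ by an arbitrarily small $\eta$ once $\op{osc}(S,\pi_n)$ is small, so the good part of $\sum_j|R_j^n|$ is bounded by $\eta[S]^\phi(t)$. On the remaining intervals, with $S(t_j)\in\bar U_i$, I keep the crude bound and dominate $\1_{\{S(t_j)\in\bar U_i\}}$ by a continuous function to apply the portmanteau theorem to the closed set $\bar U_i$, giving $\limsup_n\sum_{\text{bad}}|R_j^n|\le M'\int_0^t\1_{\{S(r)\in\bar U_i\}}\,d[S]^\phi(r)$. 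Since $\bar U_{i+1}\subset U_i$, the sets $\bar U_i$ decrease to $C$, and the measure condition $\int_0^T\1_{\{S\in C\}}\,d[S]^\phi=0$ kills this contribution as $i\to\infty$; finally $\eta\to 0$ concludes.

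The main obstacle is the control established in the second paragraph: showing the remainder is genuinely dominated by $\phi$ rather than by a power of the increment. Because $\phi$ lies strictly between $x^{m+1}$ and $x^m$, any bound coming merely from uniform continuity of $f^{(m)}$, of the form $|R_j^n|\lesssim\omega(|\text{increment}|)\,|\text{increment}|^m$, is too weak, since $\sum_j|\text{increment}|^m$ may diverge for a path of finite $\phi$-variation; one genuinely needs the $\phi$-Hölder control of $f$ together with the integral identity for $\phi^{(m)}$, and one must verify the sign/monotonicity property of $\phi^{(m)}$ making $\int|\phi^{(m)}|(h-s)^{m-1}ds$ comparable to $\phi(h)$. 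The only other delicate point is the order of limits, taking $n\to\infty$ before $i\to\infty$ and $\eta\to 0$, and the use of the closedness of $\bar U_i$ so that portmanteau applies with the $\limsup$ in the correct direction.
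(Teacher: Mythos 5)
Your proposal is correct and follows essentially the same route as the paper's proof: the same Taylor decomposition reducing everything to $\sum_j R_j^n\to 0$, the same crude bound $|R_j^n|\le M(m-1)!\,|\phi(\text{increment})|$ (the paper obtains it by repeated integration by parts, which is the same computation as your Riemann--Liouville identity using $\phi^{(k)}(0)=0$), the same stratification by the sets $U_i$ with a smooth/continuous majorant of the indicator to pass to the limit measure, and the same order of limits $n\to\infty$, then $\eta\to0$ and $i\to\infty$. The only difference is that you explicitly flag the need for $\phi^{(m)}$ to have constant sign near $0$ so that $\int_0^h|\phi^{(m)}(s)|(h-s)^{m-1}\,ds$ is comparable to $|\phi(h)|$, a point the paper's estimate uses silently.
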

    \begin{proof}
         We have 
        \begin{eqnarray*}
	    f(S_t) - f(S_0) &=& \sum_{[t_i,t_{i+1}]\in\pi_n} f(S_{t\wedge t_{i+1}}) - f(S_{t\wedge t_i}) \\
	    &=&  \sum_{[t_i,t_{i+1}]\in\pi_n}\left(f(S_{t\wedge t_{i+1}}) - f(S_{t\wedge t_i}) - \cdots - \frac{f^{(m)}(S_{t\wedge t_i})}{m!}(S_{t\wedge t_{i+1}} - S_{t\wedge t_i})^n\right) \\
	    &+& \sum_{[t_i,t_{i+1}]\in\pi_n} \sum_{j=1}^m \frac{f^{(j)}(S_{t\wedge t_i})}{j!}(S_{t\wedge t_{j+1}} - S_{t\wedge t_j})^j \\
	    &=& L^n + \sum_{[t_i,t_{i+1}]\in\pi_n} \frac{1}{\Gamma(m)}\int_{S_{t\wedge t_i}}^{S_{t\wedge t_{i+1}}}(f^{(m)}(r) - f^{(m)}(S_{t\wedge t_i}))(S_{t\wedge t_{i+1}} - r)^{m-1}dr
	\end{eqnarray*}
	where $L^n = \sum_{t_i\in\pi_n} \sum_{j=1}^m \frac{f^{(j)}(S_{t\wedge t_i})}{j!}(S_{t\wedge t_{j+1}} - S_{t\wedge t_j})^j$. It is easy to notice that there exists a constant $M>0$ and 
	\begin{eqnarray*}
	    &&\left|\frac{1}{\Gamma(m)}\int_{S_{t\wedge t_i}}^{S_{t\wedge t_{i+1}}}(f^{(m)}(r) - f^{(m)}(S_{t\wedge t_i}))(S_{t\wedge t_{i+1}} - r)^{m-1}dr\right| \\
	    &\leq& \left|M\int_{S_{t\wedge t_i}}^{S_{t\wedge t_{i+1}}}\phi^{(m)}(r-S(t_i\wedge t))(S_{t\wedge t_{i+1}} - r)^{m-1}dr\right|\\
	    &=& \left|M(m-1)\int_{S_{t\wedge t_i}}^{S_{t\wedge t_{i+1}}}\phi^{(m-1)}(r-S(t_i\wedge t))(S_{t\wedge t_{i+1}} - r)^{m-2}dr\right| \\
	    &=& \left|M(m-1)!\int_{S_{t\wedge t_i}}^{S_{t\wedge t_{i+1}}}\phi'(r-S(t_i\wedge t))dr\right| = M(m-1)!|\phi(S(t_{i+1}\wedge t) - S(t_i\wedge t))|
	\end{eqnarray*}
	Now we write 
	\begin{eqnarray*}
	    &&\sum_{[t_i,t_{i+1}]\in\pi_n} \frac{1}{\Gamma(m)}\int_{S_{t\wedge t_i}}^{S_{t\wedge t_{i+1}}}(f^{(m)}(r) - f^{(m)}(S_{t\wedge t_i}))(S_{t\wedge t_{i+1}} - r)^{m-1}dr \\
	    &=& \sum_{S(t_i\wedge t) \in U_k,[t_i,t_{i+1}]\in\pi_n}\cdots + \sum_{S(t_i\wedge t) \notin U_k,[t_i,t_{i+1}]\in\pi_n}\cdots
	\end{eqnarray*}
	We denote first part by $L_1^k$ and second part by $L_2^k$. and by locally uniform convergence property in assumption $\ref{phiass2}$, we have for fixed $\epsilon>0$, all $n$ large enough, 
	\begin{eqnarray*}
	    |L_2^k| &=& \left|\sum_{[t_i,t_{i+1}]\in\pi_n,S(t_i\wedge t)\notin U_k} \frac{1}{\Gamma(m)}\int_{S_{t\wedge t_i}}^{S_{t\wedge t_{i+1}}}(f^{(m)}(r) - f^{(m)}(S_{t\wedge t_i}))(S_{t\wedge t_{i+1}} - r)^{m-1}dr\right| \\
	    &\leq& \epsilon\left|\sum_{[t_i,t_{i+1}]\in\pi_n} \int_{S_{t\wedge t_i}}^{S_{t\wedge t_{i+1}}}\phi^{(m)}(r-S(t\wedge t_i))(S_{t\wedge t_{i+1}} - r)^{m-1}dr\right| \\
	    &\leq& \epsilon (m-1)!\sum_{[t_i,t_{i+1}]\in\pi_n}|\phi(S(t\wedge t_{i+1}) - S(t\wedge t_i))|
	\end{eqnarray*}
	And as for $L_1^k$, we have 
	\begin{eqnarray*}
	    |L_1^k| &\leq& M(m-1)!\sum_{[t_i,t_{i+1}]\in\pi_n, S(t_i\wedge t)\in U_k} |\phi(S(t_{i+1}\wedge t) - S(t_i\wedge t))| \\
	    &\leq& M(m-1)!\sum_{[t_i,t_{i+1}]\in\pi_n}\varphi_k(S(t_i\wedge t)) |\phi(S(t_{i+1}\wedge t) - S(t_i\wedge t))|
	\end{eqnarray*}
	here $\varphi_k$ is a sequence of smooth functions such that \[
	    \varphi_k(x) = \left\{
        \begin{array}{rcl}
        1 & x \in \bar{U}_k\\
        0 & x\notin U_{k-1}
        \end{array} \right.
	\]
	Then $\varphi_k(x) \rightarrow \1_{\{x \in C\}}$ point-wisely as $k\rightarrow \infty$. Combine $L_1^k$ and $L_2^k$, we obtain that 
	\begin{eqnarray*}
	    &&\lim_{n\rightarrow\infty}\left|\sum_{[t_i,t_{i+1}]\in\pi_n} \frac{1}{\Gamma(m)}\int_{S_{t\wedge t_i}}^{S_{t\wedge t_{i+1}}}(f^{(m)}(r) - f^{(m)}(S_{t\wedge t_i}))(S_{t\wedge t_{i+1}} - r)^{m-1}dr\right| \\
	    &\leq& \epsilon (m-1)![S]^\phi(t) + M(m-1)!\int_0^t \varphi_k(S(u))d[S]^\phi(u)
	\end{eqnarray*}
	We then let $\epsilon\rightarrow0$ and $k\rightarrow 0$, obtain that 
	\begin{eqnarray*}
	    &&\lim_{n\rightarrow\infty}\left|\sum_{[t_i,t_{i+1}]\in\pi_n} \frac{1}{\Gamma(m)}\int_{S_{t\wedge t_i}}^{S_{t\wedge t_{i+1}}}(f^{(m)}(r) - f^{(m)}(S_{t\wedge t_i}))(S_{t\wedge t_{i+1}} - r)^{m-1}dr\right| \\
	    &\leq& M(m-1)!\int_0^t \1_{\{S(t)\in C\}}d[S]^\phi(u) = 0
	\end{eqnarray*}
	Hence we get the result.
    \end{proof}
    If we equip space $C^\phi(\mathbb{R})$ with semi-norms 
    \[
        ||f||_{\phi,K} = ||f||_{m,K} + \sup_{x,y \in K}\frac{|f^{(m)}(y) - f^{(m)}(x)|}{|\phi^{(m)}(y-x)|}
    \]
    we can again extend the function space for which theorem holds. But since the assumption of function involving the path $S$, we can again denote this space by $E_S^\phi$.
    \begin{example}
        Here we give an example of application of the above theorem following Han et al \cite{han2021probabilistic}. We take 
        \[
            \phi(x) = \frac{|x|}{\sqrt{-\log(|x|)}}
        \]
        and 
        \[
            S(t) = \sum_{k=0}^\infty \alpha^k\varphi(b^k t), \quad t \in [0,1]
        \]
        where $|\alpha| = \frac{1}{b}$ and $\varphi(t) = \min_{z \in \mathbb{Z}}|z-t|$ or $\varphi(t) = \nu \sin(2\pi t)+ \rho \cos(2\pi t)$. The sequence of partitions considered here is the $b-$adic partitions of $[0,1]$, i.e. 
        \[
            \pi_n = \{kb^{-n}:k=0,\cdots, b^n\}, \quad n \in \mathbb{N}
        \]
        It is shown in \cite{han2021probabilistic} that the $\phi-$variation of $S$ is non-trivial. Furthermore, let $m = 1$, we see that 
        \[
            \lim_{x\rightarrow0}\frac{\phi(x)}{x} = 0, \quad \lim_{x\rightarrow0}\frac{\phi(x)}{x^2} =\infty
        \]
        Hence theorem $\ref{itophi}$ applies to the path $S$ and partitions $\pi$ with $\phi = \frac{|x|}{\sqrt{-\log (|x|)}}$. And here we can choose  $f(x) = \phi(x)$ which satisfies the assumption \ref{phiass2} with $C = \{0\}$.
    \end{example}
 
 \section{An isometry formula for the pathwise integral}\label{sec.isometry}
Ananova and Cont	\cite{ananova2017} proved a pathwise analogue of the It\^o isometry for the \follmer\  integral with respect to paths with finite quadratic variation. This relation was extended to  $p\geq 1$ in \cite{cont2018pathwise}. 
In the same flavor we derive here an  isometry relation for the pathwise integral in terms of the $\phi-$variation, where $\phi$ is a more general function satisfying the following assumption: 
\begin{assumption}\label{phiass3}
	     For $x \geq 0$, we have 
	     \begin{enumerate}
	               \item $\phi$ is strictly increasing, continuous and $\phi(0) = 0$
	         \item $\phi$ is a convex function on $[0,\infty)$ and $\log\phi(x)$ is a convex function of $\log x$.
	         \item For each $x > 0$, the limit
	         \[
	            \varphi(x):=\lim_{y\rightarrow 0}\frac{\phi(xy)}{\phi(y)}
	         \]
	         exists and the convergene is uniform on bounded sets.
	         \item $\infty> p(\phi) = \sup\left\{p: \lim_{x\rightarrow0}\frac{\phi(x)}{|x|^p} = 0\right\}>0$.
	     \end{enumerate}
	\end{assumption}
	We first introduce a generalized version of Minkowski's inequality here\cite{mulholland1949generalizations}.
	\begin{lemma}\label{generalizedMinkowski}
	    Suppose $\phi$ satisfies top 2 conditions in assumption $\ref{phiass3}$, then we have inequality
	    \[
	        \phi^{-1}\left(\sum \phi(a_n+b_n)\right) \leq \phi^{-1}\left(\sum\phi(a_n)\right) + \phi^{-1}\left(\sum\phi(b_n)\right)
	    \]
	    for two positive sequences $\{a_n\}$ and $\{b_n\}$.
	\end{lemma}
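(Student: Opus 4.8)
The plan is to follow Mulholland's strategy \cite{mulholland1949generalizations}: first reduce the inequality for arbitrary sequences to the two-term case by induction, and then establish the two-term inequality directly from the two structural hypotheses on $\phi$. Throughout one may assume $\sum_n\phi(a_n)$ and $\sum_n\phi(b_n)$ are finite, since otherwise the right-hand side is $+\infty$ and there is nothing to prove. Write $a\oplus b:=\phi^{-1}(\phi(a)+\phi(b))$ for $a,b\ge 0$; this is well defined and increasing in each argument because $\phi$ is a continuous, strictly increasing bijection of $[0,\infty)$ with $\phi(0)=0$. Set $N_k(a):=\phi^{-1}\bigl(\sum_{i=1}^k\phi(a_i)\bigr)$, so that $N_{k+1}(a)=N_k(a)\oplus a_{k+1}$ since $\sum_{i=1}^k\phi(a_i)=\phi(N_k(a))$.

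The heart of the argument is the two-term inequality
\begin{equation*}
\phi^{-1}\bigl(\phi(x_1+y_1)+\phi(x_2+y_2)\bigr)\le (x_1\oplus x_2)+(y_1\oplus y_2),\qquad x_i,y_i\ge 0. \tag{$\ast$}
\end{equation*}
Granting $(\ast)$, I would prove $N_k(a+b)\le N_k(a)+N_k(b)$ by induction on $k$. The case $k=1$ is an equality. For the inductive step I apply $(\ast)$ with $x_1=N_k(a),\ x_2=a_{k+1},\ y_1=N_k(b),\ y_2=b_{k+1}$, which bounds $\phi^{-1}\bigl(\phi(N_k(a)+N_k(b))+\phi(a_{k+1}+b_{k+1})\bigr)$ by $N_{k+1}(a)+N_{k+1}(b)$; the induction hypothesis $N_k(a+b)\le N_k(a)+N_k(b)$ together with the monotonicity of $\phi$ and $\phi^{-1}$ then gives $N_{k+1}(a+b)\le N_{k+1}(a)+N_{k+1}(b)$. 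The passage to infinite sequences is routine: the finite estimates are uniformly bounded by the (finite) right-hand side, so the increasing partial sums $\sum_{n\le N}\phi(a_n+b_n)$ converge, and one concludes by continuity of $\phi^{-1}$.

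It remains to prove $(\ast)$, which is the genuine obstacle and the only place both hypotheses are used. Setting $u=x_1\oplus x_2$ and $v=y_1\oplus y_2$, $(\ast)$ is equivalent to the assertion that
\[
\max\bigl\{\phi(x_1+y_1)+\phi(x_2+y_2):\ \phi(x_1)+\phi(x_2)=\phi(u),\ \phi(y_1)+\phi(y_2)=\phi(v)\bigr\}=\phi(u+v).
\]
I would analyse this constrained maximum by Lagrange multipliers: at an interior critical point the gradient conditions force the proportionality relation $\phi'(x_1)/\phi'(x_2)=\phi'(y_1)/\phi'(y_2)=\phi'(x_1+y_1)/\phi'(x_2+y_2)$, and the value of the objective along this configuration is exactly $\phi(u+v)$. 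Convexity of $\phi$ with $\phi(0)=0$ yields the superadditivity $\phi(u)+\phi(v)\le\phi(u+v)$, which disposes of the degenerate one-coordinate configurations, while the hypothesis that $\log\phi$ is a convex function of $\log x$ — equivalently that the index $x\mapsto x\phi'(x)/\phi(x)$ is non-decreasing — is precisely what guarantees that no admissible configuration lets the objective exceed $\phi(u+v)$. The main difficulty is the careful verification that this index-monotonicity controls the second-order behaviour of the objective on the constraint manifold, so that the proportional configuration is the maximiser; this is Mulholland's lemma, which may be invoked directly from \cite{mulholland1949generalizations} or reproved by the optimisation argument just sketched.
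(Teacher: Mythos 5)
The first thing to note is that the paper itself does not prove this lemma: it is stated as a direct quotation of Mulholland's theorem, with the citation \cite{mulholland1949generalizations} attached to the sentence introducing it. So the relevant question is whether your argument is a genuine self-contained proof, and it is not: the two-term inequality $(\ast)$, which you rightly identify as carrying the full content of the statement, is never actually established.

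What you do prove is correct. The induction reducing sequences of arbitrary length to $(\ast)$ --- applying $(\ast)$ to the pairs $(N_k(a),a_{k+1})$ and $(N_k(b),b_{k+1})$ and using monotonicity of $\phi$ and $\phi^{-1}$ --- is rigorous, and so is the monotone passage to infinite sequences. But the optimisation sketch for $(\ast)$ has two gaps, one fixable and one not. The fixable one: Assumption \ref{phiass3} gives only continuity, strict monotonicity, convexity and geometric convexity of $\phi$, not differentiability, so the Lagrange conditions are not available as written (one would have to regularise). The essential one: your claim that at an interior critical point ``the value of the objective is exactly $\phi(u+v)$'' is unjustified, and for non-homogeneous $\phi$ there is no reason for it to hold. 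The first-order conditions do give $\phi'(x_1)/\phi'(x_2)=\phi'(y_1)/\phi'(y_2)=\phi'(x_1+y_1)/\phi'(x_2+y_2)$, but without homogeneity this does not force $x_1/x_2=y_1/y_2$, and even on a proportional configuration $y=tx$ one gets neither $v=tu$ nor objective value $\phi(u+v)$; both of those computations are special to $\phi(x)=x^p$. (Your boundary treatment is also thinner than claimed: the face $x_2=0$, $y_1,y_2>0$ is not disposed of by superadditivity alone; it needs the convexity-of-increments estimate $\phi(u+v)-\phi(u+y_1)\geq \phi(v)-\phi(y_1)$.) The closing assertion that log-log convexity ``is precisely what guarantees that no admissible configuration lets the objective exceed $\phi(u+v)$'' simply restates the inequality to be proved. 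Consequently the only complete route through your proposal is the fallback you mention --- invoking Mulholland directly for $(\ast)$ --- and at that point the induction buys nothing, since Mulholland's theorem is already stated for sums of arbitrary length, which is exactly how the paper uses it.
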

	Using this lemma, we can show that
	\begin{theorem}
	    Let $\phi$ be an even function satisfying Assumption $\ref{phiass3}$ and $\pi_n$  a sequence of partitions of $[0,T]$ with vanishing mesh. Suppose $S \in V_\phi(\pi)\cap C^\alpha([0,T],\mathbb{R})$ for $\alpha > \frac{\sqrt{1+\frac{4}{p(\phi)}} - 1}{2}$. Let $F \in \mathbb{C}_b^{1,2}(\Lambda_T)$ such that $\nabla_\omega F\in\mathbb{C}_b^{1,1}(\Lambda_T)$. Assume furthermore that $F$ is Lipschitz-continuous with respect to $d_\infty$. Then $F(\cdot,S) \in V_\phi(\pi)$ and 
	    \[
	        [F(\cdot,S)]^\phi(t) = \int_{0}^t\varphi(|\nabla_\omega F(s,S_s)|)d[S]^\phi(s)
	    \]
	\end{theorem}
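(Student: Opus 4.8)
The plan is to compute the $\phi$-variation of $Y(t):=F(t,S_t)$ directly from its increments along $\pi_n$, showing that the first-order vertical term dominates and reproduces the claimed integral, while every other contribution is negligible in $\phi$-variation. First I would expand each increment. Writing $\Delta_i S = S(t_{i+1})-S(t_i)$ and using the functional Taylor expansion underlying the proof of Theorem \ref{ItoFunctionalFormula} (legitimate since $F\in\mathbb{C}_b^{1,2}$ and $\nabla_\omega F\in\mathbb{C}_b^{1,1}$), one obtains
\[
   Y(t_{i+1})-Y(t_i)=\nabla_\omega F(t_i,S_{t_i})\,\Delta_i S + R_i,\qquad |R_i|\le M\big(t_{i+1}-t_i + (\Delta_i S)^2\big),
\]
where the $O(t_{i+1}-t_i)$ piece comes from the horizontal derivative $\mathcal{D}F$ and the $O((\Delta_i S)^2)$ piece from $\nabla_\omega^2 F$ plus the Taylor remainder, and $M$ bounds $\mathcal{D}F,\nabla_\omega^2 F$ on the compact tube traced by $\{S_t:t\in[0,T]\}$.

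For the main term I would exploit Assumption \ref{phiass3}(3). Since $\phi$ is even and $|\nabla_\omega F|$ is bounded, the uniform convergence $\phi(xy)/\phi(y)\to\varphi(x)$ on bounded $x$-sets gives, as $\op{osc}(S,\pi_n)\to0$,
\[
   \phi\big(\nabla_\omega F(t_i,S_{t_i})\,\Delta_i S\big)=\big(\varphi(|\nabla_\omega F(t_i,S_{t_i})|)+o(1)\big)\,\phi(\Delta_i S)
\]
uniformly in $i$. Summing, and using that $\varphi$ is continuous with $\varphi(0)=0$ (a consequence of the regular variation encoded in Assumption \ref{phiass3}(3)--(4)) so that $s\mapsto\varphi(|\nabla_\omega F(s,S_s)|)$ is continuous along $S$ (since $\nabla_\omega F\in\mathbb{C}_l^{0,0}$), together with the weak convergence $\sum_i\phi(\Delta_i S)\,\delta_{t_i}\rightharpoonup d[S]^\phi$ coming from $S\in V_\phi(\pi)$, the main-term sum converges to $\int_0^t\varphi(|\nabla_\omega F(s,S_s)|)\,d[S]^\phi(s)$.

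It remains to absorb $R_i$. Here I would invoke the generalized Minkowski inequality (Lemma \ref{generalizedMinkowski}), applied to $a_i=|\nabla_\omega F(t_i,S_{t_i})\Delta_i S|$ and $b_i=|R_i|$ and using that $\phi$ is even and increasing on $[0,\infty)$, to get
\[
   \Big|\phi^{-1}\big(\textstyle\sum_i\phi(\Delta_i Y)\big)-\phi^{-1}\big(\textstyle\sum_i\phi(\nabla_\omega F(t_i,S_{t_i})\Delta_i S)\big)\Big|\le \phi^{-1}\big(\textstyle\sum_i\phi(R_i)\big),
\]
so that, by continuity of $\phi^{-1}$ at $0$, everything reduces to $\sum_i\phi(R_i)\to0$. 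Splitting $R_i$ by convexity of $\phi$, the second-order part $\phi((\Delta_i S)^2)$ is handled again by the scaling of Assumption \ref{phiass3}(3), since it equals $(\varphi(|\Delta_i S|)+o(1))\phi(\Delta_i S)$ and $\varphi(|\Delta_i S|)\to0$; the horizontal part $\phi(t_{i+1}-t_i)$ is controlled through the Hölder bound $|\Delta_i S|\le\|S\|_{C^\alpha}(t_{i+1}-t_i)^\alpha$ and the lower index $p(\phi)$. This last estimate is the crux: to make both remainder sums subdominant relative to the reference scale $\phi(\Delta_i S)$ — using $\phi(z)\lesssim z^{p'}$ for $p'<p(\phi)$ so that $\phi((\Delta_i S)^2)\lesssim (t_{i+1}-t_i)^{2\alpha p'}$ and comparing with $\phi(t_{i+1}-t_i)$ — one is forced into the regime $\alpha(\alpha+1)>1/p(\phi)$, which is exactly $\alpha>\tfrac{\sqrt{1+4/p(\phi)}-1}{2}$; under this condition both sums vanish. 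Combining the three steps then gives both $F(\cdot,S)\in V_\phi(\pi)$ and the isometry. The main obstacle is precisely this remainder estimate, where the time (bounded-variation) contribution and the second-order contribution must be dominated simultaneously in $\phi$-variation, and it is the interplay between $\alpha$ and $p(\phi)$ that makes the stated threshold sharp.
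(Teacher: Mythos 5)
Your steps 2--4 (the two-sided sandwich via the generalized Minkowski inequality of Lemma \ref{generalizedMinkowski}, and the identification of the main term through the uniform convergence in Assumption \ref{phiass3} together with the weak convergence of the measures $\mu^n$) follow the same route as the paper. The genuine gap is in your step 1, which is the crux of the theorem. The pointwise remainder bound $|R_i|\le M\bigl(t_{i+1}-t_i+(\Delta_i S)^2\bigr)$ is the \emph{Markovian} bound, valid for a map of the form $f(t,S(t))$ with $f\in C^{1,2}$; for a genuinely path-dependent functional it does not follow from $F\in\mathbb{C}_b^{1,2}$ and $\nabla_\omega F\in\mathbb{C}_b^{1,1}$. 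The increment $F(t_{i+1},S_{t_{i+1}})-F(t_i,S_{t_i})$ depends on the whole trajectory of $S$ on $[t_i,t_{i+1}]$, whereas the horizontal/vertical Taylor expansion you import from the proof of Theorem \ref{ItoFunctionalFormula} is applied there to the piecewise-constant path $S^n$ (for which horizontal and vertical moves reconstruct the path exactly), not to $S$ itself. Transferring that expansion to $S$ costs an error that can only be controlled through the hypothesis that $F$ is Lipschitz with respect to $d_\infty$ --- a hypothesis your proof never invokes, which is a clear warning sign. Optimizing this error over sub-partitions, as in Ananova--Cont \cite{ananova2017}, produces exactly the estimate the paper uses,
\[
R_F(s,t):=\bigl|F(t,S_t)-F(s,S_s)-\nabla_\omega F(s,S_s)(S(t)-S(s))\bigr|\le C|t-s|^{\alpha+\alpha^2},
\]
and nothing as strong as $\Delta t+(\Delta S)^2$.

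This defect makes your threshold derivation internally inconsistent. From your claimed bound, $\sum_i\phi(|R_i|)\to0$ would require only that $\phi\bigl((\Delta_i S)^2\bigr)\lesssim(\Delta t_i)^{2\alpha p'}$ be summable to zero ($2\alpha p'>1$) together with $\sum_i\phi(\Delta t_i)\to 0$; letting $p'\uparrow p(\phi)$ this is roughly $\alpha>1/(2p(\phi))$, which is \emph{strictly weaker} than $\alpha+\alpha^2>1/p(\phi)$ (since $\alpha<1$ gives $\alpha+\alpha^2<2\alpha$). The stated threshold therefore does not emerge from your estimates; it emerges precisely from the exponent $\alpha+\alpha^2$ in the Ananova--Cont bound, which your argument lacks, so your assertion that one is ``forced'' into the regime $\alpha(\alpha+1)>1/p(\phi)$ is reverse-engineered rather than derived. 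The repair is to replace your step 1 by the displayed estimate above (proved via piecewise-constant approximation and the $d_\infty$-Lipschitz property); then $\sum_i\phi(|R_F(t_i,t_{i+1})|)\to0$ follows from $\alpha+\alpha^2>1/(p(\phi)-\epsilon)$ and the definition of $p(\phi)$, and the rest of your argument goes through as in the paper.
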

	\begin{proof}
	    We have the same
	    \[
	        R_F(s,t) := |F(t,S_t) - F(s,S_t) - \nabla_\omega F(s.S_s)(S(t)-S(s))| \leq C|t-s|^{\alpha + \alpha^2}
	    \]
	    with $\gamma_F(s,t) := \nabla_\omega F(s.S_s)(S(t)-S(s))$, we have from lemma \ref{generalizedMinkowski} that 
	    \begin{eqnarray*}
	        &&\phi^{-1}\left(\sum_{\substack{[t_j,t_{j+1}]\in\pi_n\\    t_{j+1} \leq t}} \phi(|F(t_{j+1},S_{t_{j+1}}) - F(t_j,S_{t_j})|)\right)\\
	        &=& \phi^{-1}\left(\sum_{\substack{[t_j,t_{j+1}]\in\pi_n\\    t_{j+1} \leq t}} \phi(|R_F(t_j,t_{j+1}) + \gamma_F(t_j,t_{j+1})|)\right) \\
	        &\leq& \phi^{-1}\left(\sum_{\substack{[t_j,t_{j+1}]\in\pi_n\\    t_{j+1} \leq t}}\phi(|R_F(t_j,t_{j+1})|)\right) + \phi^{-1}\left(\sum_{\substack{[t_j,t_{j+1}]\in\pi_n\\    t_{j+1} \leq t}}\phi(|\gamma_F(t_j,t_{j+1})|)\right) \\
	        &\leq& 2 \phi^{-1}\left(\sum_{\substack{[t_j,t_{j+1}]\in\pi_n\\    t_{j+1} \leq t}}\phi(|R_F(t_j,t_{j+1})|)\right) + \phi^{-1}\left(\sum_{\substack{[t_j,t_{j+1}]\in\pi_n\\    t_{j+1} \leq t}}\phi(|R_F(t_j,t_{j+1}) + \gamma_F(t_j,t_{j+1})|)\right)
	    \end{eqnarray*}
	    and since $R_F(s,t) \leq C|t-s|^{\alpha^2 + \alpha}$, we have
	    \[
	         \phi^{-1}\left(\sum_{\substack{[t_j,t_{j+1}]\in\pi_n\\    t_{j+1} \leq t}}\phi(|R_F(t_j,t_{j+1})|)\right) \leq \phi^{-1}\left(\sum_{\substack{[t_j,t_{j+1}]\in\pi_n\\    t_{j+1} \leq t}}\phi(C|t_{j+1} - t_j|^{\alpha + \alpha^2})\right)
	    \]
	    Furthermore due to $\alpha > \frac{\sqrt{1+\frac{4}{p(\phi)}} - 1}{2}$, we know 
	    \[
	        \alpha + \alpha^2 > \frac{1}{p(\phi)}
	    \]
	    Hence there exist $\epsilon > 0$ such that
	    \[
	        \phi(C|t_{j+1} - t_j|^{\alpha + \alpha^2}) \leq \phi(C|t_{j+1} - t_j|^{\frac{1}{p(\phi)-\epsilon}})
	    \]
	    Then by the definition of $p(\phi)$, we see that 
	    \[
	         \phi(C|t_{j+1} - t_j|^{\frac{1}{p(\phi)-\epsilon}}) \leq \omega(|t_{j+1} - t_j|)C^{p(\phi) - \epsilon/2}|t_{j+1} - t_j|^{\frac{p(\phi) - \epsilon/2}{p(\phi) - \epsilon}}
	    \]
	    where $\omega$ is continuous on $\mathbb{R}^+$ and $\omega(0) = 0$. Combine all above, we get 
	    \[
	         \lim_{n\rightarrow\infty}  \phi^{-1}\left(\sum_{\substack{[t_j,t_{j+1}]\in\pi_n\\    t_{j+1} \leq t}}\phi(|R_F(t_j,t_{j+1})|)\right) =0
	    \]
	    and we also have
	    \[
	        \lim_{n\rightarrow\infty} \phi^{-1}\left(\sum_{\substack{[t_j,t_{j+1}]\in\pi_n\\    t_{j+1} \leq t}} \phi(|\gamma_F(t_j,t_{j+1})|)\right) =\phi^{-1}\left(\int_0^t\varphi(|\nabla_\omega F(s,S_s)|)d[S]^\phi(s)\right)
	    \]
	    In fact, since $\nabla_\omega F(t,S_t) \in \mathbb{B}(\Lambda_T)$, there exist $M>0$ such that $|\nabla_\omega F(u,S_u)| \leq M$. For each $\epsilon >0$, there exists $\delta > 0$ such that 
	    \[
	        \left|\frac{\phi(xy)}{\phi(y)} - \varphi(x)\right| \leq \epsilon
	    \]
	    for all $|x| \leq M, |y|\leq \delta$. Then we have for $n$ large enough(i.e. $osc(S,\pi_n) \leq \delta$)
	    \begin{eqnarray*}
	        &&\sum_{[t_j,t_{j+1}] \in \pi_n, t_{j+1}\leq t} \phi(|\nabla_\omega F(t_j,S_{t_j})(S(t_{j+1}) - S(t_j))|) \\
	        &\leq& \sum_{[t_j,t_{j+1}] \in \pi_n, t_{j+1}\leq t} \varphi(|\nabla_\omega F(t_j,S_{t_j})|)\phi(|S(t_{j+1}) - S(t_j)|) + \epsilon\phi(|S(t_{j+1}) - S(t_j)|)
	    \end{eqnarray*}
	    Then let $n$ tends to infinity and $\epsilon$ tends to $0$, we obtain
	    \[
	        \lim_{n\rightarrow\infty}\sum_{[t_j,t_{j+1}] \in \pi_n, t_{j+1}\leq t} \phi(|\nabla_\omega F(t_j,S_{t_j})(S(t_{j+1}) - S(t_j))|) = \int_0^t\varphi(|\nabla_\omega F(s,S_s)|)d[S]^\phi(s)
	    \]
	    Hence we get 
	    \[
	        [F(\cdot,S)]^\phi(t) = \int_0^t\varphi(|\nabla_\omega F(s,S_s)|) d[S]^\phi(s)
	    \]
	\end{proof}
	In particular, if we set $\phi(x) = |x|^p$ for $p > 0$, we have 
	\[
	    \varphi(x) = \lim_{y\rightarrow0}\frac{|xy|^p}{|y|^p} = |x|^p
	\]
	Hence, we have the corollary 
	\begin{corollary}
	    Let $p \in \mathbb{R}^+$, and $\alpha > ((1+\frac{4}{p})^\frac{1}{2}-1)/2$, let $\pi_n$ be a sequence of partitions with vanishing mesh, and let $S \in V_p(\pi)\cap C^\alpha([0,T],\mathbb{R})$. Let $F \in \mathbb{C}_b^{1,2}(\Lambda_T)$ such that $\nabla_{\omega} F \in \mathbb{C}_b^{1,1}(\Lambda_T))$. Assume furthermore that $F$ is Lipschitz-continuous with respect to $d_\infty$. Then $F(\cdot,S) \in V_p(\pi)$ and 
	    \[
	        [F(\cdot,S)]^p(t) = \int_0^t|\nabla_\omega F(s,S_s)|^p d[S]^p(s)
	    \]
	\end{corollary}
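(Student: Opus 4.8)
The plan is to obtain this corollary as a direct specialization of the preceding isometry theorem to the case $\phi(x)=|x|^p$, so that no new estimate is needed: it suffices to check that this particular $\phi$ satisfies Assumption \ref{phiass3} and to identify the resulting objects. First I would note that with $\phi(x)=|x|^p$ the measure defining $V_\phi(\pi)$,
\[
\sum_{[t_i,t_{i+1}]\in\pi_n}|\phi(S(t_{i+1})-S(t_i))|\,\delta(\cdot-t_i)=\sum_{[t_i,t_{i+1}]\in\pi_n}|S(t_{i+1})-S(t_i)|^p\,\delta(\cdot-t_i),
\]
coincides with the measure defining $V_p(\pi)$; hence $V_\phi(\pi)=V_p(\pi)$ and $[S]^\phi=[S]^p$, so the hypothesis $S\in V_\phi(\pi)\cap C^\alpha$ is literally the hypothesis $S\in V_p(\pi)\cap C^\alpha$ of the corollary. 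The conditions imposed on $F$ (membership in $\mathbb{C}_b^{1,2}$ with $\nabla_\omega F\in\mathbb{C}_b^{1,1}$ and $d_\infty$-Lipschitz continuity) are identical in both statements and thus require nothing further.

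Next I would verify Assumption \ref{phiass3} for $\phi(x)=|x|^p$. Strict monotonicity, continuity and $\phi(0)=0$ on $[0,\infty)$ are immediate; writing $u=\log x$ gives $\log\phi=pu$, which is linear hence convex in $\log x$; and for $p\geq 1$ the map $\phi$ is convex on $[0,\infty)$, which is exactly what the generalized Minkowski inequality (Lemma \ref{generalizedMinkowski}) used in the theorem's proof needs. The scaling limit is the one displayed just before the corollary,
\[
\varphi(x)=\lim_{y\to 0}\frac{|xy|^p}{|y|^p}=|x|^p,
\]
and since the ratio does not depend on $y$ the convergence is trivially uniform on bounded sets, giving condition (3). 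Finally, because $|x|^p/|x|^q=|x|^{p-q}\to 0$ as $x\to 0$ exactly when $q<p$, we obtain
\[
p(\phi)=\sup\Big\{q:\lim_{x\to 0}\frac{\phi(x)}{|x|^q}=0\Big\}=p\in(0,\infty),
\]
so condition (4) holds and, crucially, the Hölder threshold in the theorem, $\alpha>(\sqrt{1+4/p(\phi)}-1)/2$, becomes precisely the threshold $\alpha>(\sqrt{1+4/p}-1)/2$ stated in the corollary.

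With all hypotheses of the preceding theorem in force, I would then simply invoke it: $F(\cdot,S)\in V_\phi(\pi)=V_p(\pi)$ and, substituting $\varphi(x)=|x|^p$ together with $[S]^\phi=[S]^p$,
\[
[F(\cdot,S)]^p(t)=[F(\cdot,S)]^\phi(t)=\int_0^t\varphi\big(|\nabla_\omega F(s,S_s)|\big)\,d[S]^\phi(s)=\int_0^t|\nabla_\omega F(s,S_s)|^p\,d[S]^p(s),
\]
which is the asserted identity. The one genuinely delicate point, and what I expect to be the main obstacle, is the convexity requirement: for $p\geq 1$ the argument above goes through verbatim, but for $0<p<1$ the function $x\mapsto|x|^p$ is concave, so Lemma \ref{generalizedMinkowski}, and hence the sandwiching step in the theorem's proof that controls $\phi^{-1}(\sum\phi(|F(t_{j+1},\cdot)-F(t_j,\cdot)|))$ from above and below, no longer applies as stated. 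Covering the full range $p\in\mathbb{R}^+$ therefore either forces a restriction to $p\geq 1$, or requires re-running the theorem's argument with the reversed (superadditive) form of Minkowski's inequality valid for concave $\phi$, while checking that the remainder estimate $R_F=O(|t-s|^{\alpha+\alpha^2})$ still drives the remainder contribution to zero and that the principal sum still converges to $\int_0^t|\nabla_\omega F(s,S_s)|^p\,d[S]^p(s)$.
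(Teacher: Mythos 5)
Your proposal coincides with the paper's own proof: the paper simply computes $\varphi(x)=\lim_{y\to 0}|xy|^p/|y|^p=|x|^p$ and invokes the preceding isometry theorem with $\phi(x)=|x|^p$, and your explicit verification of Assumption \ref{phiass3} (monotonicity, log-convexity, uniform scaling limit, $p(\phi)=p$) merely fills in details the paper leaves implicit. Your closing caveat is well taken and is in fact a gap in the paper itself rather than in your argument: for $0<p<1$ the map $x\mapsto|x|^p$ is concave, so condition (2) of Assumption \ref{phiass3} and hence Lemma \ref{generalizedMinkowski} fail, and the corollary's stated range $p\in\mathbb{R}^+$ is only justified by the theorem as written for $p\geq 1$ (the case $0<p<1$ would need the subadditivity inequality $\sum|a_n+b_n|^p\leq\sum|a_n|^p+\sum|b_n|^p$ in place of the generalized Minkowski inequality, as you indicate).
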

	\begin{remark}
	    There exists a non empty set of $p$ and $\alpha$ such that assumption $S\in V_{\phi}\cap^\alpha([0,T],\mathbb{R})$ for $\alpha > \frac{\sqrt{1 + \frac{4}{p(\phi)}}-1}{2}$ is valid. In fact, what we need to verify is that 
	    \[
	        \sum_{[t_i,t_{i+1}]\in\pi_n} \phi(S(t_{i+1}) - S(t_i))
	    \]
	    should not be identical zero for all $\alpha > \frac{\sqrt{1 + \frac{4}{p(\phi)}}-1}{2}$. Due to the definition of $p(\phi)$, $\phi(S(t_{i+1}) - S(t_i))$ should behave like $C(\pi_n)|S(t_{i+1}) - S(t_i)|^{p(\phi)}$ when the oscillation of $\pi_n$ is small enough. Here $C(\pi_n)$ are uniformly bounded. Hence it remains to check 
	    \[
	        \sum_{[t_i,t_{i+1}]\in\pi_n} |S(t_{i+1}) - S(t_i)|^{p(\phi)}
	    \]
	    can not be zero for all $S \in C^\alpha$. By the definition of H\"older property, it remains to verify 
	    \[
	        p(\phi) \frac{\sqrt{1 + \frac{4}{p(\phi)}}-1}{2} < 1
	    \]
	    which is equivalent to 
	    \[
	        1 + \frac{4}{p(\phi)} < \left(1 + \frac{2}{p(\phi)}\right)^2
	    \]
	    which is obviously true.
	\end{remark}
	\begin{example}
	    We can see $\phi(x) = \frac{x}{\sqrt{-\log x}}$   satisfies Assumption \eqref{phiass3}. In fact, it is strictly increasing and continuous and $\phi(0) = 0$. Furthermore, 
	    \[
	        \phi'(x) = \frac{1}{\sqrt{-\log x}} + \frac{1}{2(\sqrt{-\log x})^3}>0,\quad {\rm and}\quad
	        \phi''(x) = \frac{1}{2x(\sqrt{-\log x})^3} + \frac{3}{4x(\sqrt{-\log x})^5} > 0
	    \]
	    Hence $\phi$ is convex. And we have 
	    \[
	        \log\phi(x) = \log x-\frac{\log (-\log x)}{2}
	    \]
	    It is easy to check that $x \rightarrow x - \frac{\log(-x)}{2}$ is convex. And 
	    \begin{eqnarray*}
	        \varphi(x) &=& \lim_{y\rightarrow0}\frac{\phi(|x||y|)}{\phi(|y|)}=
	          \lim_{y\rightarrow0} |x| \sqrt{\frac{-\log y}{-\log x - \log y}}= |x|
	    \end{eqnarray*}
	    Hence we have 
	    \[
	        [F(\cdot,S)]^\phi(t) = \int_0^t |\nabla_\omega F(s,S_s)|d[S]^\phi(s)
	    \]
	\end{example}   
    \section{Multi-dimensional extensions}\label{sec.multidimensional}
    
    The extension of the above results to the multidimensional case is not entirely straightforward, as the space $V_p(\pi)$ is not a vector space \cite{schied2016}.
    
    In the case of integer $p$, some definitions may be extended to the vector case by considering symmetric tensor-valued measures
    as in
    \cite{cont2018pathwise} but this is not convenient for the fractional case.
    We  use a definition which is equivalent to the definition in the paper \cite{cont2018pathwise} when $p$ is an integer. 
    
    \begin{definition}\label{multidef}
        Let $p\geq 1$ and $S = (S^1,\cdots,S^d) \in C([0,T],\mathbb{R}^d)$ be a continuous path. Let $\{\pi_n\}$ be a sequence of partition of $[0,T]$. We say that $S$ has a $p-$the order variation along $\pi = \{\pi_n\}$ if $osc(S,\pi_n) \rightarrow 0$ and for any linear combination $S_a := \sum_{i=1}^d a_i S^i$, we have $S_a \in V_p(\pi)$. Furthermore, we denote $\mu_{S_a}$ to be the weak limit of measures
        \[
            \mu_{S_a}^{n}:=\sum_{[t_j,t_{j+1]}\in\pi_n} \delta(\cdot - t_j)|S_a(t_{j+1}) - S_a(t_j)|^p
        \]
    \end{definition}
    \begin{remark}
        Although we can define the situation when $S$ admits a $p$-th variation, we can not give an object directly related to the $p$-th variation of the path $S$ in this definition. 
        
        However, it can be seen that multi-dimension fractional Brownian motion will definitely admit $p-$th variation along  sequences of partitions with fine enough mes.
    \end{remark}

    We now state the assumptions on the function $f$ that will appear in the It\^o formula.
    \begin{assumption}\label{multiass1}
         Let $p = m + \alpha$ with $m = \lfloor p \rfloor$. The symmetric tensor product $\nabla^m f(x) \in C_{loc}^\alpha(Sym_m(\mathbb{R}^d))$. Furthermore, there exists a sequence of open sets $U_k$ such that $\bar{U}_{k+1} \subset U_k$ and 
         \[
            \lim_{y\rightarrow x}\frac{||\nabla^m f(y) - \nabla^m f(x)||}{||y-x||^\alpha} = 0
         \]
         locally uniformly on $U_k^c$.
    \end{assumption}
\begin{assumption}\label{multiass2}
     The sequence $U_k$ in assumption \ref{multiass1} converges to a set $C = \cap_{k} U_k$ and for all $a = (a_1, \cdots, a_d) \in \mathbb{R}^d$,
     \[
        \int_{0}^T \1_{\{S(t) \in C\}} d\mu_{S_a} = 0
     \]
     where $\mu_{S_a}$ is defined as in Definition $\ref{multidef}$.
\end{assumption}
\begin{theorem}\label{itoformulamulti}
    Suppose $S \in V_p(\pi)$ and $f$ satisfies assumptions $\ref{multiass1}$ and $\ref{multiass2}$. Then we have the formula 
    \[
        f(S(t)) - f(S(0)) = \int_0^t \nabla f(S(u))dS(u)
    \]
    where 
    \[
        \int_0^t \nabla f(S(u))dS(u):=\lim_{n\rightarrow \infty} \sum_{[t_j,t_{j+1}]\in\pi_n} \sum_{k=1}^m\frac{1}{k!} <\nabla^k f(S(t_j)), (S(t_{j+1}) - S(t_j)^{\otimes k}>
    \]
\end{theorem}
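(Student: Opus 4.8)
The plan is to follow the same strategy as in the proofs of Theorems \ref{itoformula1} and \ref{itophi}, replacing the scalar Taylor expansion by its multidimensional counterpart with tensor contractions. First I would write the increment as a telescoping sum $f(S(t)) - f(S(0)) = \sum_{[t_i,t_{i+1}]\in\pi_n}\big(f(S(t\wedge t_{i+1})) - f(S(t\wedge t_i))\big)$ and apply Taylor's formula with integral remainder in $\mathbb{R}^d$ to each term. Using $\frac{1}{(m-1)!}\int_0^1(1-s)^{m-1}\,ds = \frac{1}{m!}$ to absorb the order-$m$ term into the compensated sum, this yields
\[
 f(S(t)) - f(S(0)) = L^n + \sum_{[t_i,t_{i+1}]\in\pi_n} R_m\big(S(t\wedge t_i), S(t\wedge t_{i+1})\big),
\]
where $L^n$ is exactly the compensated Riemann sum appearing in the statement and
\[
 R_m(x,y) = \frac{1}{(m-1)!}\int_0^1 (1-s)^{m-1}\,\big\langle \nabla^m f(x+s(y-x)) - \nabla^m f(x),\,(y-x)^{\otimes m}\big\rangle\,ds.
\]
The whole content of the theorem then reduces to showing that this remainder sum vanishes as $n\to\infty$ (and then as the auxiliary cutoff index tends to infinity), so that $L^n$ converges and equals the left-hand side.

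The key estimate is the tensor bound $|R_m(x,y)| \le \frac{1}{m!}\,\omega(x,y)\,\|y-x\|^p$, where $\omega(x,y) := \sup_{0 < s\le 1}\|\nabla^m f(x+s(y-x)) - \nabla^m f(x)\|/\|s(y-x)\|^\alpha$ is controlled by the local $\alpha$-Hölder modulus of $\nabla^m f$ from Assumption \ref{multiass1} (using $s^\alpha\le 1$ for the clean constant). The genuinely new difficulty, compared with the one-dimensional case, is that $V_p(\pi)$ is not a vector space, so there is no single ``$p$-th variation of $S$'' available to bound $\sum_i \|S(t_{i+1})-S(t_i)\|^p$. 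I would resolve this via the elementary norm comparison $\|v\|^p \le C_{d,p}\sum_{l=1}^d |v_l|^p$ (with $C_{d,p} = \max(1, d^{p/2-1})$, valid for all $p>0$), which gives
\[
 \sum_{[t_i,t_{i+1}]\in\pi_n} \|S(t_{i+1})-S(t_i)\|^p \le C_{d,p}\sum_{l=1}^d \sum_{[t_i,t_{i+1}]\in\pi_n} |S^l(t_{i+1})-S^l(t_i)|^p,
\]
and each inner sum converges to $\mu_{S_{e_l}}([0,t])$ since $S^l = S_{e_l}\in V_p(\pi)$ by Definition \ref{multidef}. This reduces all the control to the coordinate variation measures $\mu_{S_{e_l}}$, which is exactly the data that Assumption \ref{multiass2} (taken with $a=e_l$) concerns.

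With this in hand I would split the partition, as in the proof of Theorem \ref{itophi}, into intervals with $S(t_i)\in U_k$ and intervals with $S(t_i)\notin U_k$. On the latter, the locally uniform convergence in Assumption \ref{multiass1} gives, for every $\epsilon>0$ and all $n$ large, $\omega(S(t\wedge t_i), S(t\wedge t_{i+1})) \le \epsilon$, so this part is bounded by $\frac{\epsilon}{m!}C_{d,p}\sum_l \mu_{S_{e_l}}([0,t])$, which is arbitrarily small. On the former I would use the local boundedness $\omega \le M_K$ on a compact set $K\supset S([0,T])$ together with a continuous cutoff $\varphi_k$ equal to $1$ on $\bar U_k$ and $0$ outside $U_{k-1}$; weak convergence of $\mu^n_{S_{e_l}}$ applied to the bounded continuous test function $u\mapsto \varphi_k(S(u))$ yields
\[
 \limsup_{n\to\infty}\sum_{\substack{[t_i,t_{i+1}]\in\pi_n\\ S(t_i)\in U_k}}\|S(t_{i+1})-S(t_i)\|^p \le C_{d,p}\sum_{l=1}^d \int_0^t \varphi_k(S(u))\,d\mu_{S_{e_l}}(u),
\]
and letting $k\to\infty$, so that $\varphi_k(S(u))\to \1_{\{S(u)\in C\}}$ pointwise and boundedly, the right-hand side vanishes by Assumption \ref{multiass2}. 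The finitely many boundary intervals (those containing $0$, $T$, or the point $t$) contribute $o(1)$ by continuity of $S$ and $\nabla^m f$. The main obstacle is precisely this norm-comparison/linear-combination step that converts the intrinsically multidimensional increment $\|S(t_{i+1})-S(t_i)\|^p$ into the scalar coordinate measures $\mu_{S_{e_l}}$; once that bridge is built, the rest is a routine adaptation of the one-dimensional argument.
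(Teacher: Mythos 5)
Your proof is correct, and it follows the paper's overall architecture: Taylor expansion with integral remainder, a H\"older bound on the remainder via Assumption \ref{multiass1}, splitting the partition according to whether $S(t_j)$ lies in $U_k$, smooth cutoff functions combined with weak convergence of the variation measures, and finally Assumption \ref{multiass2} with $k\to\infty$. Where you genuinely diverge from the paper is at the key multidimensional step. The paper first bounds $\|S(t_{j+1})-S(t_j)\|^\alpha \le C\sum_i |S^i(t_{j+1})-S^i(t_j)|^\alpha$ and then works componentwise on the tensor $(S(t_{j+1})-S(t_j))^{\otimes m}$, which produces mixed products $\prod_l |S^l(t_{j+1})-S^l(t_j)|^{\beta_l}$ with $\sum_l \beta_l = p$; these are handled by a dedicated lemma (Lemma \ref{MultiLemma}), which uses Young's inequality and a decomposition over the $2^d$ sign patterns to dominate such products by the $p$-th variations of $2^d$ linear combinations $S_a$ of the coordinates. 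You avoid mixed products entirely by observing that $\|(y-x)^{\otimes m}\| = \|y-x\|^m$, so the remainder is controlled by $\|y-x\|^p$ alone, and then applying the elementary norm comparison $\|v\|^p \le C_{d,p}\sum_l |v_l|^p$; this reduces all the control to the coordinate variation measures $\mu_{S_{e_l}}$. Your route is leaner: it renders Lemma \ref{MultiLemma} unnecessary, and it invokes Definition \ref{multidef} and Assumption \ref{multiass2} only for the coordinate directions $a=e_l$, so in effect you prove the theorem under formally weaker hypotheses than the paper uses. What the paper's lemma buys in exchange is componentwise control of the tensor-valued sums $\sum_j \|S(t_{j+1})-S(t_j)\|^\alpha (S(t_{j+1})-S(t_j))^{\otimes m}$, which is more information than the scalar norm bound needed here, but that extra generality is not used in the theorem's conclusion. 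The remaining ingredients of your argument (uniform smallness of the H\"older modulus off $U_k$ for large $n$, the cutoff $\varphi_k$ with weak convergence and the atomless marginals to pass to $[0,t]$, dominated convergence as $k\to\infty$, and the $o(1)$ boundary terms) match the paper's proof.
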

Before we prove the theorem, we give a lemma here.
\begin{lemma}\label{MultiLemma}
    Let $\alpha_1,\cdots,\alpha_d$ be positive numbers such that $\alpha = \sum_{i=1}^d\alpha_i > 1$. Suppose $S \in V^\alpha(\pi)$ in the sense of definition $\ref{multidef}$. Then the limit of  
    \[
        \sum_{[t_j,t_{j+1}]\in\pi_n} \Pi_{i=1}^d|S^i(t_{j+1}) - S^i(t_j)|^{\alpha_i}
    \]
    can be bounded by sum of $\alpha-$th variations of $2^d$ different linear combinations of components of path $S$.
\end{lemma}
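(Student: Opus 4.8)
The plan is to reduce the asserted bound to a single \emph{pointwise} (increment-by-increment) inequality and then to sort the intervals of each $\pi_n$ according to the signs of the coordinate increments; the $2^d$ linear combinations will be exactly those indexed by sign vectors. Write $\Delta_j S^i := S^i(t_{j+1}) - S^i(t_j)$ for $[t_j,t_{j+1}]\in\pi_n$, and for a sign vector $\sigma=(\sigma_1,\dots,\sigma_d)\in\{-1,+1\}^d$ set $S_\sigma := \sum_{i=1}^d \sigma_i S^i$, which belongs to $V_\alpha(\pi)$ by Definition \ref{multidef}. The guiding observation is that on each interval one single $S_\sigma$ already dominates all coordinate increments simultaneously.

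Concretely, for each interval I would choose the sign vector $\sigma(j)$ with $\sigma(j)_i = \op{sgn}(\Delta_j S^i)$, using the convention $\sigma(j)_i = +1$ when $\Delta_j S^i = 0$. Then every term $\sigma(j)_i\,\Delta_j S^i = |\Delta_j S^i|\ge 0$, so
\[
    \Delta_j S_{\sigma(j)} = \sum_{i=1}^d \sigma(j)_i\,\Delta_j S^i = \sum_{i=1}^d |\Delta_j S^i| \ge |\Delta_j S^{i_0}|
    \quad\text{for every } i_0 .
\]
Since each $\alpha_i>0$ and $\sum_i \alpha_i = \alpha$, raising to the $\alpha_i$-th power and taking the product yields the key pointwise estimate
\[
    \prod_{i=1}^d |\Delta_j S^i|^{\alpha_i} \le \prod_{i=1}^d \bigl|\Delta_j S_{\sigma(j)}\bigr|^{\alpha_i} = \bigl|\Delta_j S_{\sigma(j)}\bigr|^{\alpha}.
\]

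To assemble the global bound I would group the intervals of $\pi_n$ by their sign vector and, because every summand is nonnegative, enlarge each sign-restricted sum to the full sum over $\pi_n$:
\[
    \sum_{[t_j,t_{j+1}]\in\pi_n} \prod_{i=1}^d |\Delta_j S^i|^{\alpha_i}
    = \sum_{\sigma\in\{-1,+1\}^d} \;\sum_{\substack{[t_j,t_{j+1}]\in\pi_n\\ \sigma(j)=\sigma}} \prod_{i=1}^d |\Delta_j S^i|^{\alpha_i}
    \le \sum_{\sigma\in\{-1,+1\}^d} \;\sum_{[t_j,t_{j+1}]\in\pi_n} |\Delta_j S_\sigma|^{\alpha}.
\]
Letting $n\to\infty$ and using $S_\sigma\in V_\alpha(\pi)$ (so that $\sum_j |\Delta_j S_\sigma|^\alpha \to [S_\sigma]^\alpha(T)$ by the characterization recalled in Section \ref{sec.pvariation}) gives
\[
    \limsup_{n\to\infty}\; \sum_{[t_j,t_{j+1}]\in\pi_n} \prod_{i=1}^d |\Delta_j S^i|^{\alpha_i}
    \le \sum_{\sigma\in\{-1,+1\}^d} [S_\sigma]^\alpha(T),
\]
a sum of the $\alpha$-th variations of the $2^d$ linear combinations $S_\sigma$, $\sigma\in\{-1,+1\}^d$, as claimed.

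I do not expect a genuine obstacle here: the entire argument is elementary once the sign-vector reduction is spotted, and that reduction is really the whole content of the lemma. The only points needing a word of care are the sign convention for degenerate increments (where $\Delta_j S^i=0$ and either sign is admissible, since the associated factor vanishes) and the enlargement of each sign-restricted sum to the full sum, which is legitimate precisely because all summands are nonnegative and this is what lets me pass to the limit through the $\alpha$-variation of each fixed $S_\sigma$. I would also note that the hypothesis $\alpha>1$ plays no role in the pointwise inequality itself; it merely records the regime of the intended application, where $\alpha=p$ is the order of variation appearing in the multidimensional Taylor remainder of Theorem \ref{itoformulamulti}. (A weighted arithmetic–geometric mean estimate would instead bound the sum by the $d$ coordinate variations $[S^i]^\alpha$, but the sign-vector decomposition is what produces the stated $2^d$ combinations.)
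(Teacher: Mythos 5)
Your proof is correct, and it is a genuinely different (and more elementary) route to the same bound, so it is worth comparing the two. Both arguments share the same architecture: bound each product of coordinate increments pointwise by the $\alpha$-th power of the increment of a signed linear combination, sum over the $2^d$ sign patterns, and pass to the limit using the fact that Definition \ref{multidef} places \emph{every} linear combination of the $S^i$ in $V_\alpha(\pi)$. The difference lies in the pointwise inequality and hence in which $2^d$ combinations appear. The paper invokes Young's inequality (weighted AM--GM), $\prod_i |x_i|^{\alpha_i/\alpha} \le \sum_i \frac{\alpha_i}{\alpha}|x_i|$, raises it to the power $\alpha$, and writes the weighted sum of absolute values as $\bigl|\sum_i \epsilon_i \frac{\alpha_i}{\alpha}x_i\bigr|$ for suitable signs $\epsilon_i$, then bounds this single term by the sum over all sign patterns; its combinations are therefore the weighted ones $\sum_i \pm\frac{\alpha_i}{\alpha}S^i$. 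You replace the AM--GM step by the cruder domination $|x_{i_0}|\le\sum_i|x_i|=\sum_i\sigma_i x_i$ with $\sigma_i=\op{sgn}(x_i)$, which needs no convexity and produces the unweighted combinations $\sum_i\pm S^i$; your grouping-by-sign-vector followed by enlarging each restricted sum (legitimate by nonnegativity) plays exactly the role of the paper's bound of one signed term by the sum over all sign patterns. The paper's route buys a slightly sharper pointwise constant (a weighted average rather than the full sum inside the absolute value); yours buys immediacy, and your remark that $\alpha>1$ is never used in the pointwise step is accurate for both arguments, as is your observation that a direct AM--GM on the powers $|x_i|^\alpha$ would give a bound by the $d$ coordinate variations instead of the stated $2^d$ combinations. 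One shared imprecision, inherited from the statement itself: like the paper, you bound only the $\limsup$ and do not show the limit exists, but an upper bound is all that the proof of Theorem \ref{itoformulamulti} actually uses.
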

\begin{proof}
    For any positive number $\alpha_1,\cdots\alpha_d$ such that $\alpha = \sum_{i=1}^d\alpha_i > 1$, Young's inequality tells us that 
    \[
        \Pi_{i=1}^d |S^i(t_{j+1}) - S^i(t_j)|^{\frac{\alpha_i}{\alpha}} \leq \sum_{i=1}^d \frac{\alpha_i}{\alpha}|S^i(t_{j+1}) - S^i(t_j)|
    \]
    which shows
    \[
        \Pi_{i=1}^d |S^i(t_{j+1}) - S^i(t_j)|^{\alpha_i} \leq \sum_{\epsilon_i = \pm 1}|\sum_{i=1}^d \epsilon_i\frac{\alpha_i}{\alpha} (S^i(t_{j+1}) - S^i(t_j))|^\alpha
    \]
    By taking sum over the partition $\pi_n$ and taking the limit of $n$ we then obtain desired result.
\end{proof}

\begin{proof}[Proof of theorem \ref{itoformulamulti}]
    The technique for the proof is the same as the previous theorems. We only consider the situation $t=T$, the case $t<T$ is similar with an $o(1)$ additional term. Using a  Taylor expansion,
    \begin{eqnarray*}
        f(S(T)) - f(S(0)) &=& \sum_{[t_j,t_{j+1}]\in\pi_n} (f(S(t_{j+1})) - f(S(t_j)))\\
        &=& \sum_{[t_j,t_{j+1}] \in \pi_n} \sum_{k=1}^m\frac{1}{k!}<\nabla^m f(S(t_j)),(S(t_{j+1}) - S(t_j))^{\otimes k}> \\
        &+& \frac{1}{(m-1)!}\int_0^1 <\nabla^m f(S(t_j) + \lambda(S(t_{j+1}) - S(t_j))) - \nabla^m f(S(t_j))\\
        &&,(S(t_{j+1}) - S(t_j))^{\otimes m}>(1-\lambda)^{m-1}d\lambda
    \end{eqnarray*}
    We denote last integral by $R(t_j,t_{j+1})$. By H\"older continuity of $\nabla^m f$, there exists $M > 0$ such that 
    \[
        \frac{||\nabla^m f(S(t_j) + \lambda(S(t_{j+1}) - S(t_j))) - \nabla^m f(S(t_j))||}{\lambda^\alpha||S(t_{j+1}) - S(t_j)||^\alpha} \leq M
    \] 
    Then we would get 
    \begin{eqnarray*}
        |R(t_j,t_{j+1})| \leq \frac{M}{(m-1)!}\int_0^1 \lambda^\alpha ||S(t_{j+1}) - S(t_j)||^\alpha||(S(t_{j+1}) - S(t_j))^{\otimes m}||(1-\lambda)^{m-1}d\lambda
    \end{eqnarray*}
     Fixed $k\in\mathbb{N}$, We divide the partition $\pi_n$ into two parts $\pi^1_n :=\{[t_j,t_{j+1}] \in \pi_n: S(t_j) \in U_k\}$ and $\pi_n^2: = \pi_n\slash \pi_n^1$. Then we have on $\pi_n^1$,
    \[
        \sum_{[t_j,t_{j+1}]\in\pi_n^1} |R(t_j,t_{j+1})| \leq \sum_{[t_j,t_{j+1}]\in\pi_n^1}  \tilde{M}||S(t_{j+1}) - S(t_j)||^\alpha||(S(t_{j+1}) - S(t_j))^{\otimes m}||
    \]
    with $\tilde{M} = \frac{M}{(m-1)!}\int_0^1 \lambda^\alpha(1-\lambda)^{m-1}d\lambda$ and on $\pi_n^2$, for each $\epsilon>0$, when $n$ is large enough,
    \[
        \sum_{[t_j,t_{j+1}]\in\pi_n^2} |R(t_j,t_{j+1})| \leq \sum_{[t_j,t_{j+1}]\in\pi_n^2} \epsilon||S(t_{j+1}) - S(t_j)||^\alpha||(S(t_{j+1}) - S(t_j))^{\otimes m}||
    \]
    In order to give our result, we need to give a bound for
    \[
        \lim_{n\rightarrow \infty} \sum_{[t_j,t_{j+1}]\in\pi_n} ||S(t_{j+1}) - S(t_j)||^\alpha||(S(t_{j+1}) - S(t_j))^{\otimes m}||
    \]
    First we have 
    \[
        ||S(t_{j+1}) - S(t_j)||^\alpha \leq C\sum_{i=1}^d |S^i(t_{j+1}) - S^i(t_j)|^\alpha
    \]
    for some constant $C$. By Lemma $\ref{MultiLemma}$, we know that each component of 
    \[
         \lim_{n\rightarrow \infty} \sum_{[t_j,t_{j+1}]\in\pi_n} ||S(t_{j+1}) - S(t_j)||^\alpha(S(t_{j+1}) - S(t_j))^{\otimes m}
    \]
    can be bounded by the sum of $2^d$ different $p-$th varaitions of linear combinations of path $S$. Hence 
    \[
        \lim_{n\rightarrow \infty} \sum_{[t_j,t_{j+1}]\in\pi_n} ||S(t_{j+1}) - S(t_j)||^\alpha||(S(t_{j+1}) - S(t_j))^{\otimes m}||
    \]
    is bounded by some constant $C(S,\pi)$ related to path $S$. Furthermore, in the sense of measure, it will be dominant by sum of measures $\mu_r$ with enough numbers $r = 1,\cdots, R$.  Then we have 
    \begin{eqnarray*}
        &&\lim_{n\rightarrow\infty} \sum_{[t_j,t_{j+1}]\in\pi_n} |R(t_j,t_{j+1})| \\
        &\leq& \lim_{n\rightarrow\infty} \sum_{[t_j,t_{j+1}]\in\pi_n} \tilde{M}g_{k}(S(t_j))||S(t_{j+1}) - S(t_j)||^\alpha||(S(t_{j+1}) - S(t_j))^{\otimes m}|| + \epsilon C(S,\pi)\\
        &\leq& \sum_{r=1}^R \int_0^T g_{k}(S(t)) d\mu_r + \epsilon C(S,\pi)
    \end{eqnarray*}
    Here $g_k$ is a positive smooth function with support on $U_{k-1}$, $g_k|_{U_k} = 1$ and $g_k\leq 1$. Letting $k\rightarrow\infty$ and $\epsilon\rightarrow0$, by assumption $\ref{multiass2}$, we obtain the result.
\end{proof}
\begin{remark}
    This result may be extended to the time-dependent functions as discussed above.
\end{remark}
\begin{remark}
    It is hopeless to obtain a multi-variate Taylor expansion up to order $p$ in general. The  simplest function $f(x) = |x|^\alpha$ with $0< \alpha < 1$ does not admit a Taylor-type expansion up to order $\alpha$ at $0$ since the limit
    \[
        \lim_{x \rightarrow 0}\frac{||x||^\alpha}{|x_1|^\alpha + \cdots |x_d|^\alpha}
    \]
    does not exist.
\end{remark}


\end{document}